\documentclass[11pt,oneside,a4paper,reqno]{amsart}

\usepackage[utf8]{inputenc}
\usepackage[T1]{fontenc}
\usepackage[english]{babel}

\usepackage{lmodern}

\usepackage[
    activate={true,nocompatibility},
    final,
    tracking=true,
    spacing=true,
    stretch=15,
    shrink=15
]{microtype}
\microtypecontext{spacing=nonfrench}
\SetTracking{encoding=*,family=*}{-20}

\usepackage[
    a4paper,
    margin=1.1in,
    top=1.2in,
    bottom=1.2in,
    headheight=15pt,
    headsep=0.4in,
    footskip=1.2cm
]{geometry}
\usepackage[dvipsnames,svgnames]{xcolor}
\usepackage{enumerate}
\definecolor{academicblue}{HTML}{1A365D}
\definecolor{winfull}{HTML}{7B2D3A}
\definecolor{wintrade}{HTML}{C99AA2}
\definecolor{hlred}{HTML}{9B2C2C}
\definecolor{rowgray}{gray}{0.95}

\usepackage{amsmath,amssymb,amsthm}
\usepackage{bm}

\usepackage{hyperref}
\hypersetup{
    colorlinks=true,
    citecolor=academicblue,
    linkcolor=academicblue,
    urlcolor=academicblue,
    breaklinks=true
}

\numberwithin{equation}{section}

\theoremstyle{plain}
\newtheorem{theorem}{Theorem}[section]
\newtheorem{proposition}[theorem]{Proposition}
\newtheorem{lemma}[theorem]{Lemma}

\theoremstyle{definition}

\newtheorem{assumption}[theorem]{Assumption}

\theoremstyle{remark}
\newtheorem{remark}[theorem]{Remark}

\newcommand{\RR}{\mathbb{R}}
\newcommand{\NN}{\mathbb{N}}

\newcommand{\Om}{\Omega}
\newcommand{\jM}{\mathbb{S}^{2}}

\newcommand{\dx}{\,\mathrm{d}x}

\newcommand{\nn}{\bm{n}}
\newcommand{\vv}{\bm{v}}
\newcommand{\gv}{\bm{g}}
\newcommand{\EE}{\bm{E}}
\newcommand{\DD}{\bm{D}}

\newcommand{\nuvec}{\bm{\nu}}
\newcommand{\gam}{\bm{\gamma}}
\newcommand{\eee}{\bm{\varepsilon}}

\newcommand{\Aeps}{{\bm{A}}_{\varepsilon}}

\newcommand{\xis}{\xi_{\star}}

\newcommand{\unn}{u_{\nn}}
\newcommand{\vnn}{v_{\nn}}

\newcommand{\ns}{\nn_{\ast}}
\newcommand{\nse}{\nn_{\ast}^{\varepsilon}}

\newcommand{\eps}{\varepsilon}

\newcommand{\vpar}{\vv_{\parallel}}
\newcommand{\vperp}{\vv_{\perp}}

\newcommand{\divg}{\operatorname{div}}
\newcommand{\curl}{\operatorname{\mathbf{curl}}}

\newcommand{\Hcurl}{H(\curl;\Om)}
\newcommand{\Hdiv}{H(\divg;\Om)}

\newcommand{\XN}{X_{\mathrm{N}}(\Om)}
\newcommand{\Vspace}{V(\Om)}

\makeatletter

\begin{document}

\title[Variational principles for liquid crystals and electric fields]{Variational principles for the interaction of liquid crystals and electric fields in the Oseen--Frank model}

\author{Giovanni Di Fratta}
\address{Dipartimento di Matematica e Applicazioni ``R. Caccioppoli'', Universit\`a degli Studi di Napoli ``Federico II'', Via Cintia, 80126 Napoli, Italy.}
\email{giovanni.difratta@unina.it}

\author{Valeriy V. Slastikov}
\address{School of Mathematics, University of Bristol, Bristol BS8 1TW, United Kingdom.}
\email{valeriy.slastikov@bristol.ac.uk}

\author{Arghir D. Zarnescu}
\address{BCAM, Basque Center for Applied Mathematics, Mazarredo 14, E48009 Bilbao, Bizkaia, Spain; IKERBASQUE, Basque Foundation for Science, Maria Diaz de Haro 3, 48013 Bilbao, Bizkaia, Spain; Simion Stoilow Institute of Mathematics of the Romanian Academy, P.O. Box 1-764, 014700 Bucharest, Romania.}
\email{azarnescu@bcamath.org}

\subjclass[2020]{Primary 49S05, 35Q60, 76A15; Secondary 35J50, 35B40, 49J45, 49N15.}
\keywords{Nematic liquid crystals, Oseen--Frank energy, electric field, dielectric anisotropy, duality, Fenchel--Rockafellar, Hodge decomposition, $\Gamma$-convergence, harmonic maps, asymptotic analysis.}

\begin{abstract}
We develop a rigorous variational framework for uniaxial nematic liquid crystals interacting with an external electric field in the one-constant Oseen--Frank approximation. Equilibrium configurations are governed by a nonlocal, nonlinear energy with a challenging min-max saddle-point structure. Our first main result reformulates this problem as a pure double-minimization problem. Using convex duality and the Hodge decomposition, we replace the scalar electrostatic potential with a vector potential, yielding a closed-form dual functional with a unique minimizer. This direct energy-minimization principle is advantageous for both theoretical analysis and numerical simulation. Our second result rigorously quantifies the decoupling of the electrostatic back-reaction in the limit of small dielectric anisotropy. We establish a uniform quadratic energy bound between the exact nonlocal energy and its standard local approximation, formally justifying the widespread physics heuristic of neglecting induced depolarization fields. Finally, under a strict coercivity condition, we combine $\Gamma$-convergence, uniform Sobolev regularity, and a perturbative coercivity transfer to prove that physical minimizers converge to limiting harmonic maps at an optimal, quantitative linear rate.
\end{abstract}

\maketitle

\section{Introduction}\label{sec:intro}

Nematic liquid crystals are a class of soft materials that combine the fluidity of isotropic liquids with the orientational order of solid crystals \cite{degennes1993, Chaikin1995}. At the microscopic level, they consist of rod-like molecules whose locally preferred alignment is represented mathematically by a unit-length vector field, the \emph{director}. A defining macroscopic feature of nematics---and the primary mechanism enabling their use in display devices---is their characteristic electro-optic response \cite{virga1994, stewart2004}.

Within the continuum Oseen--Frank theory, the interaction between the nematic director and an externally applied electric field is dictated by the material's dielectric anisotropy, $\varepsilon_a = \varepsilon_{\parallel} - \varepsilon_{\perp}$. Here, $\varepsilon_{\parallel}$ and $\varepsilon_{\perp}$ denote the dielectric permittivities measured parallel and perpendicular to the director, respectively. When subjected to a voltage, the director field experiences a dielectric torque that induces reorientation, as observed in phenomena like the Fréedericksz transition \cite{Arakelyan1984, gartland2021}. This reorientation dynamically alters the local dielectric permittivity tensor of the medium. According to Maxwell's equations---specifically Gauss's law for dielectrics---this spatial heterogeneity in the permittivity induces a depolarization field that back-reacts on the original electric potential \cite{gartland2020}. Consequently, the true physical equilibrium of the system is governed by a fully coupled electromechanical energy functional that is nonlinear and intrinsically \emph{nonlocal}. From a variational perspective, this coupling manifests as a saddle-point (min--max) problem: the total free energy is minimized with respect to the director field's elastic distortions, but it must be maximized with respect to the induced electrostatic potential.

To simplify the analytical and computational treatment of this system, it is a standard approach in the physics literature to employ a \emph{local} approximation \cite{virga1994, gartland2020}. By assuming the external electric field is unaffected by the liquid crystal's reorientation, one decouples the potential and obtains a purely local energy functional. Physically, this approximation is formally justified in the limit of small dimensionless dielectric anisotropy, $\varepsilon = \varepsilon_a / \varepsilon_{\perp} \ll 1$. In this regime, the feedback effect of the director on the electric field is considered negligible \cite[Section 4.1.1]{virga1994}. However, establishing the mathematical validity of this limit, deriving quantitative rates of convergence, and understanding its impact on ground-state selection present distinct challenges. The standard direct methods of the calculus of variations do not apply directly to the min--max formulation, and the topology of harmonic maps into the sphere \cite{Hardt1986, BethuelBrezisCoron90, Riviere95} can lead to non-uniqueness and bifurcation phenomena.

In this paper, we develop a variational framework to study this coupled electro-nematic system and present two main contributions. First, we resolve the min--max saddle-point structure by reformulating the system as a double-minimization problem. Combining Fenchel--Rockafellar convex duality \cite{Temam1999} with the $L^2$ Hodge decomposition \cite{Girault2012}, we introduce a divergence-free vector potential that yields a coercive, closed-form dual functional. This minimization principle provides an alternative foundation for theoretical analysis and numerical methods \cite{Hu2009}. Second, we provide a quantitative justification for the local approximation in the small anisotropy regime ($\varepsilon \to 0$). By isolating the energetic gap and employing a perturbative coercivity transfer from limiting harmonic maps, we prove that physical minimizers converge to the local approximation at an optimal linear rate. Finally, we discuss a topological bifurcation phenomenon, showing that the exact physical energy and its local approximation can accumulate to macroscopically distinct ground states.

\subsection{The variational setting}\label{ssec:varprob}In the Oseen--Frank
theory, the elastic free energy of a nematic in a bounded Lipschitz domain
$\Om \subset \RR^3$ is $\int_{\Om} W_e (\widetilde{\nn}) \dx$ with
\begin{equation}
  \label{eq:OFenergy} 2 W_e (\widetilde{\nn}) = K_1 (\divg \widetilde{\nn})^2 + K_2  (\widetilde{\nn} \cdot \curl
  \widetilde{\nn})^2 + K_3  | \widetilde{\nn} \times \curl \widetilde{\nn} |^2,
\end{equation}
the constants $K_1, K_2, K_3 > 0$ penalizing the splay, twist, and bend modes
of distortion \cite{stewart2004,virga1994}. We work throughout under the
\emph{one-constant approximation} $K_1 = K_2 = K_3 = K$, in which the
elastic density reduces to the Dirichlet energy density $\tfrac{K}{2} | \nabla
\widetilde{\nn} |^2$.

The interaction with an external electric field $\widetilde{\EE} $ is governed by Maxwell's equations which reduce to Gauss--Faraday laws
\begin{equation}
  \widetilde{\DD}=\eee
(\widetilde{\nn}) \widetilde{\EE}, \qquad
  \mathrm{div} \hspace{0.17em} \widetilde{\DD}= \rho, \qquad \mathbf{curl}
  \,\widetilde{\EE}= 0 \hspace{0.27em} \hspace{0.27em} \text{in } \Om,
  \label{eq:FGews} \quad
\end{equation}
where $\rho$ is the free-charge density. The last equation provides that $\widetilde{\EE}=-\nabla \tilde{U}$ for some potential $\tilde{U}$.

The constitutive relation $\widetilde{\DD} = \eee
(\nn) \widetilde{\EE}$ with governs the interaction between the nematic liquid crystal $n$ and
an electric field $\widetilde{\EE}$ through the {\emph{dielectric permittivity
tensor}} $\eee(\nn)$, which reflects the
anisotropic nature of the medium. We have
\begin{equation}
  \label{eq:permittivity} \eee (\widetilde{\nn}) = \varepsilon_0  \left(
  \varepsilon_{\perp} I + \varepsilon_a \hspace{0.17em} \widetilde{\nn} \otimes \widetilde{\nn}
  \right), \qquad \varepsilon_a := \varepsilon_{\|} - \varepsilon_{\perp},
\end{equation}
where $\varepsilon_0$ is the vacuum permittivity and $\varepsilon_{\perp},
\varepsilon_{\|} > 0$ are the relative permittivities respectively perpendicular and
parallel to $\widetilde{\nn}$ \cite{degennes1993,vertogen1988}.

The total free energy of a nematic liquid crystal in the presence of an electric field is given by the sum of two contributions:
\begin{equation} \label{eq:ennandE}
  \widetilde{\mathcal{E}} (\widetilde{\nn}, \widetilde{\EE}) = \int_{\Om} W_e (\widetilde{\nn})
  + W_{\widetilde{\EE}}(\widetilde{\nn}, \widetilde{\EE}) \, \mathrm{d} x,
\end{equation}
where $W_e (\widetilde{\nn})$ denotes, as before, the elastic energy associated with the distortions in the molecular orientation, and $W_{\widetilde{\EE}}(\widetilde{\nn},
\widetilde{\EE})$ represents the energy due to the interaction between the liquid crystal and the applied electric field $\widetilde{\EE}$ given by: 
\begin{equation}
  W_{\widetilde{\EE}} (\widetilde{\nn}, \widetilde{\EE}) = - \frac{1}{2} \widetilde{\DD} \cdot
  \widetilde{\EE}, \label{eq:freeendenDE}
\end{equation}
 
 We consider the non-dimensionalization described in Appendix~\ref{sec:nondim} and from now on we work in a non-dimensional setting that we describe subsequently. We start by denoting
\begin{equation}
  \eps := \frac{\varepsilon_a}{\varepsilon_{\perp}} > - 1
\end{equation}
to be the dimensionless \emph{dielectric anisotropy} parameter, and define
the rescaled dielectric tensor
\begin{equation} \label{eq:Aeps}
   \Aeps (\nn) := I + \eps \hspace{0.17em} \nn \otimes \nn .
\end{equation}
This is a symmetric matrix-valued $L^{\infty}$ field whose eigenvalue is $1$
with multiplicity two on $\nn^{\perp}$, and $1 + \eps$ with multiplicity one
along $\nn$. Using the Pythagorean identity $| \zeta |^2 = | \zeta \times \nn
|^2 + (\zeta \cdot \nn)^2$, we obtain
\begin{equation}
  \label{eq:pointwiseeps} \Aeps (\nn) \zeta \cdot \zeta \hspace{0.27em} =
  \hspace{0.27em} | \zeta \times \nn |^2 + (1 + \eps)  (\zeta \cdot \nn)^2,
  \qquad \forall \hspace{0.17em} \zeta \in \RR^3,
\end{equation}
which, for $\zeta \neq 0$, shows that $| \zeta |^{- 2} \Aeps (\nn) \zeta
\cdot \zeta$ is a convex combination of $1$ and $1 + \eps$. Consequently, it is
bounded and uniformly elliptic in the sense that
\begin{equation}
  \label{eq:ellipticity} \min (1, 1 + \eps) \hspace{0.17em} | \zeta |^2
  \hspace{0.27em} \leqslant \hspace{0.27em} \Aeps (\nn) \zeta \cdot \zeta
  \hspace{0.27em} \leqslant \hspace{0.27em} \max (1, 1 + \eps) \hspace{0.17em}
  | \zeta |^2, \qquad \forall \hspace{0.17em} \zeta \in \RR^3,
\end{equation}
as long as\footnote{It should be noted that this requirement amounts to $\varepsilon_{\parallel}>0$ which is always the case.} $\varepsilon > - 1$.

The (non-dimensional) electrostatic potential $\unn \in H^1 (\Om)$ associated
with $\nn$ is the unique weak solution of
\begin{equation}
  \label{eq:Poisson} \divg \left( \Aeps (\nn) \nabla \unn \right) = 0 \quad
  \text{in } \Om, \qquad \unn = \xi_b  \quad \text{on } \partial \Om,
\end{equation}
for prescribed boundary potential $\xi_b \in H^{1 / 2} (\partial \Om)$; the
map $\nn \mapsto \unn$ is well-defined and continuous by
\eqref{eq:ellipticity}, but genuinely \emph{nonlinear and nonlocal} in
$\nn$. Let $\xis \in H^1 (\Om)$ denote the harmonic extension of $\xi_b$,
i.e., the unique solution of
\begin{equation}
  \label{eq:harmext} \Delta \xis = 0 \quad \text{in } \Om, \qquad \xis = \xi_b
  \quad \text{on } \partial \Om ;
\end{equation}
 Note that this is the canonical reference potential, corresponding to $\unn$ at $\eps =
0$ or, more generally, to the case in which the dielectric coupling is
neglected. 
For technical reasons, we assume throughout this work that the boundary datum
$\xi_b$ admits a globally Lipschitz harmonic extension, namely $\xis \in W^{1,
\infty} \left( \Om \right)$. This assumption is standard and provides a
minimal regularity requirement for the second-order asymptotic expansion
carried out in the proof of Theorem~\ref{thm:thm2}, where terms of the form $\| \ensuremath{\boldsymbol{v}} \cdot \nabla \xis \|_{L^{2}}$ arise with $\nn \cdot \ensuremath{\boldsymbol{v}}= 0$ and $\ensuremath{\boldsymbol{v}}
\in H_0^1 \left( \Om \right)$. By classical Schauder theory, this condition is
automatically satisfied, for instance, when both the domain $\Om$ and the
boundary datum $\xi_b$ are of class $C^{1, \alpha}$, which implies $\xis \in
C^{1, \alpha} \left( \Om \right)$. For nonsmooth domains, global Lipschitz
regularity of the harmonic extension $\xis$ still holds provided that $\Om$ is
convex and $\xi_b \in W^{1, \infty} \left( \partial \Om \right)$.

Imposing a strong-anchoring condition $\nn = \nn_b$ on $\partial \Om$ for some
prescribed $\nn_b \in H^{1} (\Om ; \jM)$, the admissible class is
\begin{equation}
  \label{eq:admissible} \mathcal{A}_{\nn_b} (\Om) := \{ \nn \in H^1 (\Om, \jM)
  : \nn = \nn_b \text{ on } \partial \Om \},
\end{equation}
and the (dimensionless) equilibrium configurations are minimizers, over
$\mathcal{A}_{\nn_b} (\Om)$, of
\begin{equation}
  \label{eq:mainenergy} \mathcal{E}_\varepsilon (\nn) \hspace{0.27em} := \hspace{0.27em}
  \int_{\Om} \left[ \hspace{0.17em} \alpha \hspace{0.17em} | \nabla \nn |^2 -
  \Aeps (\nn) \nabla \unn \cdot \nabla \unn \hspace{0.17em} \right]  \dx,
  \qquad \unn \text{ solving \eqref{eq:Poisson},}
\end{equation}
where $\alpha > 0$ is the dimensionless ratio of elastic to electrostatic
energy (Appendix~\ref{sec:nondim}).

Two structural features drive the entire
analysis. The functional is \emph{nonconvex} in $\nn$ both through the
unit-length constraint $| \nn | = 1$ and through the quadratic-in-$\nn$
coupling with the electrostatic field; and \emph{nonlocal} in $\nn$ through
the solution operator of \eqref{eq:Poisson}. Together these features place
\eqref{eq:mainenergy} outside the scope of classical direct-method results for
$\jM$-valued harmonic maps; the latter constitute the special case $\eps = 0$.

A further conceptual obstacle is the \emph{saddle-point} character of the
energy. Decomposing $\unn = v + \xis$ with $v \in H^1_0 (\Om)$, the
homogeneous component $v$ is the unique solution of
\begin{equation}
  \label{eq:vsolve} \divg \left( \Aeps (\nn) \nabla v \right) = - \divg \left(
  \Aeps (\nn) \nabla \xis \right) \quad \text{in } \Om, \qquad v \in H^1_0
  (\Om) ;
\end{equation}
which we denote  by $\vnn$, so that $\unn = \vnn + \xis$. From the variational
perspective, $\vnn$ can be characterized as the \emph{unique maximizer} in
$H_0^1 \left( \Om \right)$ of the homogeneous electrostatic functional
\begin{equation}
  \label{eq:WE} \mathcal{W}_E (\nn, v) := - \hspace{-0.17em} \int_{\Om} \Aeps
  (\nn) \nabla v \cdot \nabla v \dx - 2 \hspace{-0.17em} \int_{\Om} \Aeps
  (\nn) \nabla v \cdot \nabla \xis \dx, \qquad v \in H^1_0 (\Om) .
\end{equation}
Expanding the quadratic at $\unn = \vnn + \xis$ and using $\mathcal{W}_E (\nn,
\vnn) = \max_{v \in H_0^1 \left( \Om \right)} \mathcal{W}_E (\nn, v)$, the
energy \eqref{eq:mainenergy} factorizes as
\begin{equation}
  \label{eq:Ereduction} \mathcal{E}_\varepsilon (\nn) \hspace{0.27em} = \hspace{0.27em}
  \alpha \hspace{-0.17em} \int_{\Om} | \nabla \nn |^2 \dx \hspace{0.27em} -
  \hspace{0.27em} \int_{\Om} \Aeps (\nn) \nabla \xis \cdot \nabla \xis \dx
  \hspace{0.27em} + \hspace{0.27em} \max_{v \in H^1_0 (\Om)} \mathcal{W}_E
  (\nn, v) .
\end{equation}
The middle term is independent of $v$ but \emph{depends on $\nn$}: more
precisely, by \eqref{eq:Aeps}, it splits as the true constant $-
\hspace{-0.17em} \int | \nabla \xis |^2$ plus the $\nn$-dependent piece $-
\eps \hspace{-0.17em} \int (\nn \cdot \nabla \xis)^2 \dx$. Pulling the entire
middle term inside the max, minimizing \eqref{eq:mainenergy} is therefore
equivalent to the min--max problem
\begin{equation}
  \label{eq:minmax} \min_{\nn \in \mathcal{A}_{\nn_b} (\Om)}  \hspace{0.27em}
  \max_{v \in H^1_0 (\Om)} \left[ \hspace{0.17em} \alpha \hspace{-0.17em}
  \int_{\Om} | \nabla \nn |^2 \dx - \int_{\Om} \Aeps (\nn) \nabla \xis \cdot
  \nabla \xis \dx +\mathcal{W}_E (\nn, v) \right],
\end{equation}
the identification $\min \mathcal{E}_\varepsilon= \min_{\nn} \max_v (\cdot)$ being exact,
with no additive constants discarded. The order of operations matters: one
must first maximize in $v$ (yielding $\unn = \vnn + \xis$) and only then
minimize in $\nn$. Direct minimization in both variables would yield $-
\infty$ in $v$.

\subsection{Main results}\label{ssec:results}The first main result of this
paper converts the min--max structure of~\eqref{eq:minmax} into a genuine
\emph{min--min problem} by introducing a vector potential as the dual
variable. This is not merely cosmetic: minimizing simultaneously in $(\nn,
\gam)$ allows one to apply direct methods of the calculus of variations to
both variables and brings the electric-coupling term into a form that is local
and uniformly bounded below. It is also crucial for a large class of numerical simulations that use gradient descent schemes to identify local minimizers. Having a formulation in terms of minimisation only (as opposed to a min-max one) allows one to access this type of schemes directly. 

In what follows, $\Hcurl$ denotes the space of $L^2  (\Om, \RR^3)$ vector
fields with $L^2$ curl, and $\XN$ denotes the Coulomb-gauge subspace
\begin{equation}
  \label{eq:XN} \XN := \{ \gam \in \Hcurl : \divg \gam = 0 \text{ in } \Om,
  \gam \cdot \nuvec = 0 \text{ on } \partial \Om \} .
\end{equation}
Also, we denote by $\Vspace$ the subspace of $L^2 (\Om, \RR^3)$ consisting of
divergence-free vector fields:
\begin{equation}
  \label{eq:Vspace} \Vspace := \{ \gv \in L^2 (\Om, \RR^3) : \divg \gv = 0
  \text{ in } \Om \} .
\end{equation}

\begin{theorem}[Min--min reformulation via duality]
  \label{thm:thm1} Let $\Om \subset \RR^3$ be a bounded, simply connected
  Lipschitz domain with connected boundary, let $\xis$ be the harmonic
  extension \eqref{eq:harmext} of $\xi_b \in H^{1 / 2} (\partial \Om)$, and
  let $\vnn \in H^1_0 (\Om)$ be the unique solution of \eqref{eq:vsolve}.
  Define the dual functional
  \begin{equation}
    \label{eq:Adual} \mathcal{A}_{\eps} (\nn, \gv) \hspace{0.27em} :=
    \hspace{0.27em} \int_{\Om} \Aeps^{- 1} \left( \nn \right)  \left( \gv +
    \Aeps (\nn) \nabla \xis \right) \cdot \left( \gv + \Aeps (\nn) \nabla \xis
    \right) \dx, \qquad \gv \in L^2  (\Om, \RR^3) .
  \end{equation}

  Then
  \begin{align}
    \mathcal{W}_E (\nn, \vnn) = \max_{v \in H^1_0 (\Om)} \mathcal{W}_E (\nn,
    v) & = \min_{\gv \in \Vspace} \mathcal{A}_{\eps} (\nn, \gv) \nonumber\\
    & = \min_{\gam \in \Hcurl} \mathcal{A}_{\eps} (\nn, \curl \gam) =
    \min_{\gam \in \XN} \mathcal{A}_{\eps} (\nn, \curl \gam), 
    \label{eq:thm1main}
  \end{align}
  the minimum over $\Vspace$ being attained at the unique $\gv_{\ast} = -
  \Aeps (\nn) \nabla \unn$, and the minimum over $\XN$ at a \emph{unique}
  $\gam \in \XN$ satisfying
  \begin{equation}
    \label{eq:curlgamma} \curl \gam = - \Aeps (\nn) \nabla \unn .
  \end{equation}
  If, in addition, $\partial \Om$ is of class $C^{1, 1}$ or $\Om$ is convex,
  then $\gam \in H^1 (\Om, \RR^3)$ and
  \begin{equation}
    \label{eq:thm1H1} \mathcal{W}_E (\nn, \vnn) \hspace{0.27em} =
    \hspace{0.27em} \min_{\gam \in H^1 (\Om, \RR^3)} \mathcal{A}_{\eps} (\nn,
    \curl \gam) \hspace{0.27em} = \hspace{0.27em} \min_{\gam \in H^1 (\Om,
    \RR^3) \cap \XN} \mathcal{A}_{\eps} (\nn, \curl \gam) .
  \end{equation}
\end{theorem}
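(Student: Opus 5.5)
The plan is to prove the max/min identity in \eqref{eq:thm1main} by an elementary complementary-energy (Thomson versus Dirichlet principle) argument, and then transfer the minimization from $\Vspace$ to curls using the Hodge decomposition. The abstract Fenchel--Rockafellar theorem is not needed, since everything is quadratic. Write $\bm{A}=\Aeps(\nn)$, which is symmetric, bounded and uniformly elliptic by \eqref{eq:ellipticity}; hence $\bm{A}^{-1}$ is too, and $\mathcal{A}_{\eps}(\nn,\cdot)$ is a strictly convex, coercive, continuous quadratic functional on $L^2(\Om,\RR^3)$.

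\textbf{Step 1: lower bound (weak duality).} Fix $v\in H^1_0(\Om)$ and $\gv\in\Vspace$, and set $\bm{w}=\gv+\bm{A}\nabla\xis$. The pointwise Young inequality
\[
\bm{A}^{-1}\bm{w}\cdot\bm{w}\geqslant -\bm{A}\bm{\zeta}\cdot\bm{\zeta}-2\bm{w}\cdot\bm{\zeta},
\]
with equality iff $\bm{\zeta}=-\bm{A}^{-1}\bm{w}$, is applied with $\bm{\zeta}=\nabla v$. Since $\divg\gv=0$ and $v\in H^1_0$, we have $\int\gv\cdot\nabla v=0$. Integrating then gives $\mathcal{A}_{\eps}(\nn,\gv)\geqslant -\int\bm{A}\nabla v\cdot\nabla v-2\int\bm{A}\nabla\xis\cdot\nabla v=\mathcal{W}_E(\nn,v)$. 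This yields $\sup_v\mathcal{W}_E\leqslant\inf_{\Vspace}\mathcal{A}_{\eps}$.

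\textbf{Step 2: equality and uniqueness.} Take $v=\vnn$ and $\gv_\ast=-\bm{A}\nabla\unn$. Then $\gv_\ast\in\Vspace$ precisely by the weak form of \eqref{eq:Poisson}/\eqref{eq:vsolve}. Moreover $\bm{w}=-\bm{A}\nabla\vnn$, so $\nabla\vnn=-\bm{A}^{-1}\bm{w}$ is exactly the equality case of Young's inequality. Hence $\mathcal{A}_{\eps}(\nn,\gv_\ast)=\mathcal{W}_E(\nn,\vnn)$, and both extrema are attained. Uniqueness of the minimizer over $\Vspace$ follows from strict convexity of $\mathcal{A}_{\eps}(\nn,\cdot)$ on the affine-free linear space $\Vspace$.

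\textbf{Step 3: Hodge decomposition.} This is the main input. For $\Om$ bounded, simply connected and Lipschitz with connected boundary, I would invoke the classical result (Girault--Raviart): every $\gv\in L^2$ with $\divg\gv=0$ is $\curl\gam$ for a unique $\gam\in\XN$. The two topological assumptions kill, respectively, the harmonic Neumann fields and the flux obstructions; connectedness of $\partial\Om$ is what removes the need for $\gv\cdot\nuvec$ to have zero flux on each boundary component. Conversely, $\curl(\Hcurl)\subset\Vspace$ because $\divg\curl=0$ distributionally. Therefore
\[
\curl(\XN)=\curl(\Hcurl)=\Vspace,
\]
and the three minima coincide. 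Uniqueness of $\gam\in\XN$ follows from injectivity of $\curl$ on $\XN$: a curl-free, divergence-free field with zero normal trace is a harmonic Neumann field, which vanishes since $\Om$ is simply connected. This gives \eqref{eq:curlgamma}. The main obstacle is checking exactly which version of the decomposition applies under only Lipschitz regularity, together with the topological hypotheses, so I would cite the precise theorem from \cite{Girault2012}.

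\textbf{Step 4: $H^1$ refinement.} If $\partial\Om\in C^{1,1}$ or $\Om$ is convex, the embedding $\XN\hookrightarrow H^1(\Om,\RR^3)$ holds (Amrouche--Bernardi--Dauge--Girault / \cite{Girault2012}). Hence the minimizer $\gam$ lies in $H^1\cap\XN$. Since $H^1\cap\XN\subset H^1\subset\Hcurl$, the infima over these sets are sandwiched between the minimum over $\Hcurl$ and the value attained at $\gam$, which gives \eqref{eq:thm1H1}.
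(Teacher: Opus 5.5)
Your proposal is correct. Its second half (Steps~3--4) matches the paper: both reduce everything to the curl isomorphism $\curl:\XN\to\Vspace$ of Theorem~\ref{thm:isoCurl}, together with $\curl(\Hcurl)\subset\Vspace$.

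The genuine difference is how you obtain $\max_v\mathcal{W}_E(\nn,v)=\min_{\Vspace}\mathcal{A}_{\eps}(\nn,\cdot)$. Write $A=\Aeps(\nn)$. The paper completes the square around the candidate $\gv_\ast$. It uses the splitting $\gv+A\nabla\xis=(\gv-\gv_\ast)-A\nabla\vnn$, the vanishing of $\int_{\Om}(\gv-\gv_\ast)\cdot\nabla\vnn\dx$, and the orthogonality relation \eqref{eq:orth}. This gives the exact identity \eqref{eq:completedsquare}, which delivers minimality, uniqueness and quadratic growth in $\gv$ at once. However, it only identifies $\min\mathcal{A}_{\eps}$ with the value $\mathcal{W}_E(\nn,\vnn)$. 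The equality $\mathcal{W}_E(\nn,\vnn)=\max_v\mathcal{W}_E(\nn,v)$ is taken from the Dirichlet principle recalled in the introduction.

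Your route is the classical Thomson--Dirichlet weak duality: a pointwise Fenchel--Young inequality for arbitrary pairs $(v,\gv)$, closed by an equality certificate at $(\vnn,\gv_\ast)$. It proves the maximality of $\vnn$ and the minimality of $\gv_\ast$ simultaneously, with neither extremal principle used as an input. You can strengthen it by recording the Young defect as an identity:
\[
\mathcal{A}_{\eps}(\nn,\gv)-\mathcal{W}_E(\nn,v)=\int_{\Om}A^{-1}\bigl(\gv+A\nabla(\xis+v)\bigr)\cdot\bigl(\gv+A\nabla(\xis+v)\bigr)\dx,\qquad v\in H^1_0(\Om),\quad \gv\in\Vspace .
\]
This is a Prager--Synge-type duality-gap identity, giving a computable a posteriori bound for any primal--dual pair, which suits the paper's numerical motivation. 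At $v=\vnn$ it reduces exactly to the paper's completed square \eqref{eq:completedsquare}, so uniqueness follows without a separate strict-convexity argument.

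Finally, your sandwich argument in Step~4 for the unconstrained minimum over $H^1(\Om,\RR^3)$ is cleaner than item~(iii) of the paper's proof. Since the $\XN$-minimizer already lies in $H^1$, no surjectivity statement for $\curl$ on $H^1(\Om,\RR^3)$ is needed.
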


Applying Theorem~\ref{thm:thm1} to the inner maximization in
\eqref{eq:minmax}, problem \eqref{eq:mainenergy} is equivalent to the min--min
problem
\begin{equation}
  \label{eq:minmin} \min_{\nn \in \mathcal{A}_{\nn_b} (\Om)}  \hspace{0.27em}
  \min_{\gam \in \Hcurl} \left[ \hspace{0.17em} \alpha \hspace{-0.17em}
  \int_{\Om} | \nabla \nn |^2 \dx - \int_{\Om} \Aeps (\nn) \nabla \xis \cdot
  \nabla \xis \dx +\mathcal{A}_{\eps} (\nn, \curl \gam) \right],
\end{equation}
again with exact identification of the minimum values. The gain over
\eqref{eq:minmax} is conceptual: the inner problem is now a minimization, the
joint functional is uniformly bounded below for each fixed $\xis$, and the
coupling with $\nn$ enters through the localized weight $\Aeps^{- 1} (\nn)$
rather than through the global solution operator of \eqref{eq:Poisson}. The
additional term $- \hspace{-0.17em} \int \Aeps (\nn) \nabla \xis \cdot \nabla
\xis$, local in $\nn$, becomes the boundary-extension contribution that drives
the quantitative asymptotic limit of Theorem~\ref{thm:thm2}.

The second main result quantifies the regime in which the electrostatic
coupling effectively decouples, justifying a formal asymptotic expansion
present in the physical literature \cite[§4.1.1]{virga1994}. Physically,
the dimensionless anisotropy parameter is
\begin{equation}
  \label{eq:eps} \eps = \frac{\varepsilon_a}{\varepsilon_{\perp}}
\end{equation}
(see Appendix~\ref{sec:nondim}). The relevant physical regime is $\varepsilon
\to 0$, where the dielectric tensor $\Aeps (\nn) = I + \varepsilon
\hspace{0.17em} \nn \otimes \nn$ degenerates to the identity $I$, and the
electrostatic potential $\unn$ approaches the harmonic extension $\xis$.

To make this rigorous, we introduce two distinct functionals: the \emph{local} functional $\mathcal{G}_{\varepsilon}$ and the \emph{coupled}
functional $\mathcal{F}_{\varepsilon}$:
\begin{align}
  \mathcal{G}_{\eps} (\nn) & := \alpha \hspace{-0.17em} \int_{\Om} | \nabla
  \nn |^2 \dx - \int_{\Om} \Aeps (\nn) \nabla \xis \cdot \nabla \xis \dx,
  \qquad \nn \in \mathcal{A}_{\nn_b} (\Om),  \label{eq:Geps}\\
  \mathcal{F}_{\eps} (\nn, \gv) & := \mathcal{G}_{\eps} (\nn)
  +\mathcal{A}_{\eps} (\nn, \gv), \qquad (\nn, \gv) \in \mathcal{A}_{\nn_b}
  (\Om) \times \Vspace .  \label{eq:Feps}
\end{align}
Here, $\mathcal{A}_{\nn_b} (\Om)$ and $\Vspace$ are defined in
\eqref{eq:admissible} and \eqref{eq:Vspace}, respectively. The unperturbed
base energy (for $\varepsilon=0$) is therefore given by:
\begin{equation}
  \mathcal{G}_0 (\nn) = \alpha \int_{\Om} | \nabla \nn |^2 \dx - \int_{\Om} |
  \nabla \xis |^2 \dx . \label{eq:G0fun}
\end{equation}
These functionals have distinct, complementary roles. The local functional
$\mathcal{G}_{\varepsilon} (\nn)$ isolates the harmonic-map contribution; it
is the energy of the system if one entirely ignores the dielectric
back-reaction. The coupled functional $\mathcal{F}_{\varepsilon} (\nn, \gv)$
is the duality lift of the full nonlocal energy $\mathcal{E}_\varepsilon$ in the dual
variable $\gv$. By Theorem~\ref{thm:thm1}, we know that
\begin{equation}
  \label{eq:Fmin} \min_{\gv \in \Vspace} \mathcal{F}_{\eps} (\nn, \gv)
  \hspace{0.27em} = \hspace{0.27em} \mathcal{E}_\varepsilon (\nn) .
\end{equation}
Because the decomposition $\mathcal{F}_{\varepsilon}
=\mathcal{G}_{\varepsilon} +\mathcal{A}_{\varepsilon}$ cleanly separates the
local energy from the non-negative electrostatic correction
$\mathcal{A}_{\varepsilon} \geqslant 0$, the maximum error introduced by
approximating the full coupled energy $\mathcal{E}_\varepsilon (\nn)$ with the local
functional $\mathcal{G}_{\varepsilon} (\nn)$ is exactly:
\begin{equation}
  \label{eq:error_def} \sup_{\nn \in H^1 (\Om, \jM)} \left| \mathcal{E}_\varepsilon(\nn)
  -\mathcal{G}_{\varepsilon} (\nn) \right| = \sup_{\nn \in H^1 (\Om, \jM)}
  \max_{v \in H^1_0 (\Om)} \mathcal{W}_E (\nn, v) = \sup_{\nn \in H^1 (\Om,
  \jM)} \min_{\gv \in \Vspace} \mathcal{A}_{\varepsilon} (\nn, \gv) .
\end{equation}
To convert this energy gap into a quantitative $H^1$-norm estimate for the
minimizers, we require the second variation of the base energy $\mathcal{G}_0$
to be strictly coercive. This coercivity guarantees uniform quadratic growth
of the energy away from the minimum, providing the necessary analytical
control to bound the distance between the minimizers of the coupled and
uncoupled problems.

\begin{assumption}
  \textbf{(Coercivity at the limit)}\label{ass:coercivity} A global
  minimizer $\ns \in H^1 (\Om, \jM)$ of $\mathcal{G}_0$ satisfies the
  coercivity condition
  \begin{equation}
    \label{eq:coerc} \mathcal{L}_0 [\vv] := \int_{\Om} | \nabla \vv |^2 \dx -
    \int_{\Om} | \vv |^2 | \nabla \ns |^2 \dx \geqslant \alpha_0 \| \vv
    \|_{H^1_0 (\Om)}^2
  \end{equation}
  for every $\vv \in H^1_0 (\Om, \RR^3)$ with $\vv \cdot \ns = 0$ a.e. in
  $\Om$, for some constant $\alpha_0 > 0$.
\end{assumption}

We provide sufficient conditions for Assumption~\ref{ass:coercivity} in
Appendix~\ref{ssec:coercivity-suff}: it holds universally when the boundary
data takes values in an open hemisphere of $\jM$ (Lemma~\ref{lem:hemisphere}),
and can be verified explicitly in the parallel-plate geometry relevant to
physical display devices (Lemma~\ref{lem:parallelplates}). We now state our main quantitative convergence theorem.

\begin{theorem}[Rates of convergence to the asymptotic limit]
  \label{thm:thm2}Let $\Om \subset \RR^3$ be a bounded Lipschitz domain. Then
  there exists a constant $C_{\xi} > 0$, depending only on $\xis$ and $\Om$,
  such that
  \begin{equation}
    \label{eq:energybound} \sup_{\nn \in H^1 (\Om, \jM)} \left| \min_{\gv \in
    \Vspace} \mathcal{E}_\varepsilon(\nn) -\mathcal{G}_{\varepsilon}
    (\nn) \right| \hspace{0.27em} \leqslant \hspace{0.27em} C_{\xi} 
    \hspace{0.17em} \varepsilon^2  \qquad \text{for every } \varepsilon \in
    (0, 1) .
  \end{equation}
  Suppose, in addition, that $\partial \Om$ is of class $C^3$ and $\nn_b \in
  C^{\infty}  (\partial \Om ; \jM)$. Then for every family $\{ \nse
  \}_{\varepsilon \to 0}$ of global minimizers of $\mathcal{G}_{\varepsilon}$
  in $\mathcal{A}_{\nn_b} (\Om)$ there exist a subsequence (not relabeled) and
  a global minimizer $\ns$ of $\mathcal{G}_0$ in $\mathcal{A}_{\nn_b} (\Om)$
  such that:\smallskip
  \begin{enumerate}[(i)]
    \item $\| \nse - \ns \|_{H^1 (\Om)} \to 0$ as $\varepsilon \to 0$;\smallskip
    
    \item if, in addition, $\ns$ satisfies Assumption~\ref{ass:coercivity},
    then there exist constants $K_{\xi}, \varepsilon_{\bullet} > 0$ such that,
    for every family $\{\bm{m}^{\varepsilon}\} \subset \arg \min \mathcal{E}_{\varepsilon}$
    with $\bm{m}^{\varepsilon} \to \ns$ in $H^1 (\Om,
    \RR^3)$,
    \begin{equation}
      \label{eq:rate} \| \nse - \bm{m}^{\varepsilon}
      \|_{H^1 (\Om)} \hspace{0.27em} \leqslant \hspace{0.27em} K_{\xi} 
      \hspace{0.17em} \varepsilon \qquad \text{for every } \varepsilon \in (0,
      \varepsilon_{\bullet}) .
    \end{equation}
  \end{enumerate}
\end{theorem}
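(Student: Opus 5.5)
For the energy bound \eqref{eq:energybound}, the plan is to use the exact identity \eqref{eq:Ereduction}: $\mathcal{E}_\varepsilon(\nn)-\mathcal{G}_\eps(\nn)=\mathcal{W}_E(\nn,\vnn)$. Testing \eqref{eq:vsolve} with $\vnn$ gives
\[
\int_\Om \Aeps(\nn)\nabla \vnn\cdot\nabla\vnn \dx=-\int_\Om \Aeps(\nn)\nabla\xis\cdot\nabla\vnn \dx,
\]
so $\mathcal{W}_E(\nn,\vnn)=\int_\Om\Aeps(\nn)\nabla\vnn\cdot\nabla\vnn\dx\ge 0$. Since $\xis$ is harmonic and $\vnn\in H^1_0$, we have $\int\nabla\xis\cdot\nabla\vnn=0$. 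The right-hand side therefore reduces to $-\eps\int(\nn\cdot\nabla\xis)(\nn\cdot\nabla\vnn)$. Ellipticity \eqref{eq:ellipticity} with $\eps\in(0,1)$ then yields $\|\nabla\vnn\|_{L^2}^2\le \eps\|\nabla\xis\|_{L^2}\|\nabla\vnn\|_{L^2}$, hence $\|\nabla\vnn\|_{L^2}\le\eps\|\nabla\xis\|_{L^2}$. This gives $0\le \mathcal{W}_E\le 2\eps^2\|\nabla\xis\|^2_{L^2}$ uniformly in $\nn$, so we may take $C_\xi=2\|\nabla\xis\|_{L^2}^2$. Equivalently, this is the dual statement of Theorem~\ref{thm:thm1} with the trial field $\gv=-\nabla\xis$.

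For (i), the plan is a standard $\Gamma$-convergence and compactness argument. Because $|\mathcal{G}_\eps-\mathcal{G}_0|\le\eps\|\nabla\xis\|_{L^2}^2$ uniformly on $H^1(\Om,\jM)$, minimizers $\nse$ are bounded in $H^1$ and converge weakly, along a subsequence, to some $\ns\in\mathcal{A}_{\nn_b}$. Comparison with any competitor shows that $\ns$ minimizes $\mathcal{G}_0$ and that $\|\nabla\nse\|_{L^2}\to\|\nabla\ns\|_{L^2}$, which upgrades the convergence to strong $H^1$ convergence. The same argument applies to minimizers $\bm m^\eps$ of $\mathcal{E}_\eps$, using \eqref{eq:energybound}.

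For (ii), the idea is to transfer coercivity from $\ns$ to nearby points. Both $\nse$ and $\bm m^\eps$ satisfy Euler--Lagrange systems that are harmonic-map equations with right-hand sides of order $\eps$: for $\nse$ the source is $\eps(\nn\cdot\nabla\xis)\nabla\xis$ projected tangentially, and for $\bm m^\eps$ it additionally involves $\nabla u_{\bm m}$. I will use the $C^3$ boundary, smooth data and $\xis\in W^{1,\infty}$, together with Schoen--Uhlenbeck type $\varepsilon$-regularity and the strong $H^1$ convergence, to get uniform regularity. The target is a uniform $L^\infty$ bound on the gradients away from the finitely many singularities of $\ns$, or at least uniform $W^{1,p}$ control. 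With this control, the second variation of $\mathcal{G}_\eps$ at points close to $\ns$ is still bounded below by $\tfrac{\alpha_0}{2}\|\vv\|^2$ for tangential $\vv$.

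Next, let $\vv$ be the tangential projection of $\bm m^\eps-\nse$ (the normal part is quadratically small because $|\nn|=1$). A second-order expansion of $\mathcal{G}_\eps$ around $\nse$ gives $\mathcal{G}_\eps(\bm m^\eps)-\mathcal{G}_\eps(\nse)\ge c\|\bm m^\eps-\nse\|_{H^1}^2$. On the other side,
\[
\mathcal{G}_\eps(\bm m^\eps)-\mathcal{G}_\eps(\nse)\le \mathcal{E}_\eps(\bm m^\eps)-\mathcal{E}_\eps(\nse)+\big[\mathcal{W}_E(\nse)-\mathcal{W}_E(\bm m^\eps)\big].
\]
The bracket needs more than the $O(\eps^2)$ bound: I will show it is $O(\eps^2)\|\bm m^\eps-\nse\|_{H^1}$, using that $\nn\mapsto\mathcal{W}_E(\nn,\vnn)$ is $\eps^2$ times a Lipschitz functional. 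Concretely, its derivative in $\nn$ involves $\eps(\nn\cdot\nabla u)\nabla\vnn$, with $\|\nabla\vnn\|=O(\eps)$ and $\xis$ Lipschitz. Together with $\mathcal{E}_\eps(\bm m^\eps)\le\mathcal{E}_\eps(\nse)$, this gives $c\|\bm m^\eps-\nse\|^2\le C\eps^2\|\bm m^\eps-\nse\|$, and hence the linear rate.

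The main obstacle is this coercivity transfer: making the second-order expansion uniform when $\ns$ may have point singularities, where $|\nabla\ns|^2\notin L^\infty$. I would first try a Hardy-type control of $\int|\vv|^2|\nabla\nn|^2$ combined with the $H^1$ convergence. I would also keep the cubic remainder terms of the expansion in a form controlled by $\|\vv\|_{H^1}^2\cdot o(1)$.
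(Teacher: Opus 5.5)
Your proof of \eqref{eq:energybound} is correct. It is the primal counterpart of Lemma~\ref{lem:energygap}, where the paper inserts $\gv=-\nabla\xis$ into $\mathcal{A}_{\eps}$. Part (i) is the argument of Proposition~\ref{prop:gamma}.

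The gap is in (ii): the coercivity transfer, which carries the whole rate, is only announced, never proved. Note first that no cubic remainder has to be controlled. Testing \eqref{eq:EL} with $2\vv$ and using $2\vv\cdot\nse+|\vv|^2=0$ gives the exact identity $\mathcal{G}_{\eps}(\nse+\vv)-\mathcal{G}_{\eps}(\nse)=\alpha\,\mathbb{I}_{\eps}[\vv]+O(\eps)\|\vv\|_{L^2}^2$. Everything therefore reduces to showing $\mathbb{I}_{\eps}[\vv]\geqslant\tilde\alpha\|\vv\|_{H^1_0}^2$ for small $\vv\in H^1_0$ with $|\nse+\vv|=1$.

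Assumption~\ref{ass:coercivity} is posed at $\ns$ and only for $\vv\cdot\ns=0$. To transfer it, you need $\int|\vv|^2\bigl||\nabla\nse|^2-|\nabla\ns|^2\bigr|\dx$ and $\int|\vpar|^2|\nabla\nse|^2\dx$ to be $o(1)\|\vv\|_{H^1_0}^2$. Since $|\vv|^2$ is only in $L^3$, H\"older requires $\nabla\nse\to\nabla\ns$ in $L^p$ with $p\geqslant 3$. Neither $H^1$ convergence nor uniform $W^{1,p}$ bounds with $p\leqslant 3$ provide this.

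The paper supplies both ingredients:
\begin{itemize}
\item Theorem~\ref{thm:uniformW22est} gives uniform $W^{2,2}$ bounds, hence $W^{1,4}$ convergence and $\ns\in W^{1,\infty}$.
\item Lemma~\ref{lem:coerctransfer} splits $\vv=\phi\,\ns+P\vv$ with $\phi=-\tfrac12|\vv|^2-\vv\cdot(\nse-\ns)$. The constraint makes the $\ns$-parallel part quadratically small, the term $(P\partial_i\vv\cdot\ns)\partial_i\phi$ vanishes, and every error is $O(\tilde\delta+\|\nse-\ns\|_{W^{1,4}})\|\vv\|_{H^1_0}^2$.
\end{itemize}
When $\ns$ is smooth up to the boundary, your $\varepsilon$-regularity plan does deliver the needed convergence. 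Strong $H^1$ convergence gives uniformly small scaled energy on small balls, hence uniform $C^{1,\beta}$ bounds, and you could then finish exactly as in Lemma~\ref{lem:coerctransfer}.

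The singular case you flag is genuine. Classical theory gives smoothness only away from finitely many points. For example, $x/|x|$ minimizes on the unit ball with smooth datum $\nn_b(x)=x$, and there $|\nabla\ns|=\sqrt2/|x|\notin L^3$. The paper excludes this case only through the regularity asserted in Theorem~\ref{thm:uniformW22est}.

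A Hardy bound does not rescue the transfer in the singular case, for two reasons:
\begin{itemize}
\item The singular points of $\nse$ need not coincide with those of $\ns$. Then $|\nabla\nse|^2-|\nabla\ns|^2$ is not small as a quadratic form on $H^1_0$: concentrate $\vv$ near a singularity of $\nse$ at the scale of its distance to the corresponding singularity of $\ns$, and the ratio is scale invariant.
\item Near those points $|\nse-\ns|$ is of order one, so the constraint no longer forces the $\ns$-parallel part of $\vv$ to be small.
\end{itemize}
As written, then, your proposal does not prove (ii).

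Separately, you misjudge the upper bound: the $O(\eps^2)$ gap already suffices. Since $\mathcal{A}_{\eps}\geqslant 0$ and $\bm{m}^{\eps}$ is a minimizer,
\[
\mathcal{G}_{\eps}(\bm{m}^{\eps})-\mathcal{G}_{\eps}(\nse)\leqslant\mathcal{E}_{\eps}(\bm{m}^{\eps})-\mathcal{G}_{\eps}(\nse)\leqslant\mathcal{E}_{\eps}(\nse)-\mathcal{G}_{\eps}(\nse)\leqslant C_\xi\eps^2 .
\]
Combined with the quadratic lower bound, this is exactly \eqref{eq:rate}, which is how the paper concludes.

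Your own inequality $c\|\bm{m}^{\eps}-\nse\|^2\leqslant C\eps^2\|\bm{m}^{\eps}-\nse\|$ would give an $O(\eps^2)$ rate, not a linear one. The Lipschitz estimate behind it also needs an extra argument. In the derivative of $\nn\mapsto\mathcal{W}_E(\nn,\vnn)$, the contribution $-2\eps\int(\bm{h}\cdot\nabla\vnn)(\nn\cdot\nabla\vnn)\dx$, with $\bm{h}=\bm{m}^{\eps}-\nse$, is controlled by $\eps^2\|\bm{h}\|_{H^1}$ only with integrability of $\nabla\vnn$ beyond $L^2$. That step can simply be dropped.
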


{\begin{remark}
  \label{rmk:bifurcation}Statement (ii) quantifies the error incurred when an
  exact physical minimizer $\ensuremath{\boldsymbol{m}}^{\varepsilon} \in \arg
  \min \mathcal{E}_{\varepsilon}$ is replaced by its local approximate
  minimizer $\ensuremath{\boldsymbol{n}}^{\varepsilon} \in \arg \min
  \mathcal{G}_{\varepsilon}$. Although both families of functionals
  $\Gamma$-converges to $\mathcal{G}_0$ (see Section~\ref{ssec:gamma}), such a comparison is meaningful only when the corresponding minimizing sequences
  converge to the same harmonic map $\ensuremath{\boldsymbol{n}}^{\ast} \in
  \mathrm{argmin} \mathcal{G}_0$. This assumption is essential. Indeed, owing
  to the intrinsic non-convexity of the problem, minimizers of $\mathcal{G}_0$
  need not be unique, and there is no {{\em a priori\/}} reason to expect
  arbitrary minimizing sequences of $\mathcal{E}_{\varepsilon}$ and
  $\mathcal{G}_{\varepsilon}$ will select the same asymptotic limit. As shown
  in Appendix~\ref{app:bifurcation}, even in physically relevant
  geometries such as a slab (or a rod), there exist families of minimizers
  $\{\ensuremath{\boldsymbol{m}}^{\varepsilon} \} \subset \arg \min
  \mathcal{E}_{\varepsilon}$ and $\{ \nse \} \subset \arg \min
  \mathcal{G}_{\varepsilon}$ that bifurcate as $\varepsilon \rightarrow 0$:
  \begin{equation}
    \ensuremath{\boldsymbol{m}}^{\varepsilon} \to \ensuremath{\boldsymbol{m}}
    \quad \text{and} \quad \nse \to \ensuremath{\boldsymbol{n}}_{\ast} \qquad
    \text{strongly in } H^1 (\Omega),
  \end{equation}
  where $\ensuremath{\boldsymbol{m}}, \ensuremath{\boldsymbol{n}}_{\ast} \in
  \arg \min \mathcal{G}_0$ are distinct limiting ground states, i.e.,
  $\ensuremath{\boldsymbol{m}} \neq \ensuremath{\boldsymbol{n}}_{\ast}$.
\end{remark}

\begin{remark}It should be noted that working directly with the local functional $\mathcal{G}_{\eps}$, as usually done in the physics literature, can potentially produce the {\it ``wrong" physical minimizer}. Indeed, one could be in the situation as noted above where the minimizers $\nse$ are such that  $\quad \nse \to \ensuremath{\boldsymbol{n}}_{\ast}$ but the minimizers of the full energy functional $\ensuremath{\boldsymbol{m}}^{\varepsilon}$, the more physically relevant ones, {\it are not close to $\nse$}.
\end{remark}

\begin{remark}[Regularity considerations and choice of the reference family]
  When formulating the comparison between the two functionals, one could theoretically center the analysis on the minimizers of either $\mathcal{E}_{\varepsilon}$ or $\mathcal{G}_{\varepsilon}$. In our framework, we choose to treat the local approximation $\mathcal{G}_{\varepsilon}$ as the reference family. This choice is primarily motivated by the analytical properties of the associated Euler--Lagrange equations.  Specifically, the local functional $\mathcal{G}_{\varepsilon}$ readily admits uniform $W^{2,2}$ regularity bounds for its minimizers because the background electric potential $\xi_b$ inherently belongs to $W^{1,\infty}(\Omega)$. Moreover, the functional $\mathcal{G}_\varepsilon
 $ is the standard choice in the physics literature, because of its simplicity.  
  
  Conversely, establishing the same uniform regularity for the exact physical energy $\mathcal{E}_{\varepsilon}$ presents significant technical obstacles. The minimizers of $\mathcal{E}_{\varepsilon}$ depend non-locally on the induced depolarization potential $v$. For a generic director field $\boldsymbol{n} \in H^1(\Omega; \mathbb{S}^2)$, standard elliptic theory (via Meyers-type estimates) only guarantees $v \in W^{1,p}(\Omega)$ for some undetermined $p > 2$. Since this generally falls short of the $W^{1,\infty}$ bound required to systematically bootstrap uniform $W^{2,2}$ estimates for the director field, treating $\mathcal{G}_{\varepsilon}$ as the regular base sequence and $\mathcal{E}_{\varepsilon}$ as the perturbation bypasses this regularity bottleneck and yields a cleaner, more robust mathematical statement.
\end{remark}
}

Two structural features of Theorem~\ref{thm:thm2} elucidate the mechanism underlying the electro-nematic coupling. First, the $O(\varepsilon^2)$ scaling in  the energy bound \eqref{eq:energybound} arises from a precise algebraic cancellation built into the dual formulation of $\mathcal{F}_{\varepsilon}$. More specifically, choosing the background field $\gv = -\nabla \xis \in \Vspace$ as a competitor yields an exact balance between the divergence-free constraint and the boundary-extension terms. By contrast, a straightforward estimate based solely on algebraic bounds would produce only the suboptimal scaling $O(\varepsilon)$.

Second, translating this $O (\varepsilon^2)$ energy gap into the optimal $O
(\varepsilon)$ strong $H^1$-norm estimate in \eqref{eq:rate} relies entirely
on the quadratic coercivity \eqref{eq:coerc} of the second variation. This
analytical control must be robustly transferred from the uncoupled ground
state to the fully coupled $\varepsilon$-system.

\begin{remark}[The necessity of coercivity]
  \label{rem:coercivity}While the uniform energy bound \eqref{eq:energybound}
  holds unconditionally, Assumption~\ref{ass:coercivity} is strictly necessary
  to deduce the convergence rate. Without strict coercivity at the limit, the
  energy landscape may possess flat directions, causing minimizers of the
  coupled system to converge at suboptimal rates or even fail to select a
  unique branch of solutions. The technical core of Theorem~\ref{thm:thm2}
  lies in transferring this coercivity from $\ns$ to the $\varepsilon$-coupled
  family $\nse$ (Lemma~\ref{lem:coerctransfer}). This transfer is the
  principal motivation for establishing uniform $W^{2, 2}$ regularity in
  Theorem~\ref{thm:uniformW22est}: securing strong convergence of $\nse \to
  \ns$ in $W^{1, 4} (\Om)$ provides the exact analytical integrability
  required to reliably perturb the coercivity inequality.
\end{remark}

\subsection{Strategy of proof}\label{ssec:strategy}Theorem~\ref{thm:thm1} is
proved by Fenchel--Rockafellar duality adapted to the geometric setting of
$L^2$ vector fields. Writing $\mathcal{W}_E (\nn, v) = - \varphi (v) - \psi
(v)$ with $\varphi$ the convex electrostatic energy and $\psi$ a linear
source, the Fenchel--Rockafellar theorem identifies $\max \mathcal{W}_E$ with
the conjugate minimization $\min_{f \in H^{- 1}} (\varphi^{\ast} (- f) +
\psi^{\ast} (f))$. The natural parametrization $f = - \divg \bm{h}$ and the $L^2$ Hodge decomposition reduce the dual
variable to a divergence-free vector field $\gv \in \Vspace$, and
Theorem~\ref{thm:isoCurl} (the isomorphism of the curl on simply connected
Lipschitz domains) lifts $\gv$ to a unique vector potential $\gam \in \XN$.
Although the duality route is conceptually motivating, the resulting formula
\eqref{eq:Adual} admits a direct quadratic-completion verification, which we
present in Section~\ref{sec:duality}.

Theorem~\ref{thm:thm2} proceeds in three steps, each separating a distinct
mechanism:

\smallskip
\noindent\emph{Step 1: Energy comparison.} Using $\gv = - \nabla \xis$ as a competitor in
$\Vspace$ and exploiting the algebraic identity $I - \tfrac{\varepsilon}{1 +
\varepsilon} \hspace{0.17em} \nn \otimes \nn = (I + \varepsilon
\hspace{0.17em} \nn \otimes \nn)^{- 1}$, we obtain matching two-sided bounds
yielding \eqref{eq:energybound}. The estimate is uniform in $\nn \in H^1 (\Om,
\jM)$ and depends only on $\| \nabla \xis \|_{L^{\infty}}$ and $| \Om |$.

\smallskip
\noindent\emph{Step 2: $\Gamma$-convergence.} The functionals $\mathcal{G}_{\varepsilon}$
$\Gamma$-converge to $\mathcal{G}_0$ in the $L^2$ topology, and the unit-norm
constraint forces the perturbation $- \varepsilon \hspace{-0.17em} \int (\nn
\cdot \nabla \xis)^2$ to vanish uniformly. Strong $H^1$-convergence of
minimizers follows by the standard energy-coincidence argument.

\smallskip
\noindent\emph{Step 3: Quantitative rate.} Uniform $W^{2, 2}$-regularity of the minimizers
(Theorem~\ref{thm:uniformW22est}) upgrades $H^1$-convergence to $W^{1,
4}$-convergence, which is precisely what is needed to perturb the
second-variation coercivity at $\ns$ to the entire family $\nse$
(Lemma~\ref{lem:coerctransfer}). The energy gap of order $\varepsilon^2$ then
converts to an $H^1$-bound of order $\varepsilon$ via the quadratic
coercivity, yielding \eqref{eq:rate}.

The uniform regularity rests on the partial regularity theory of Hardt,
Kinderlehrer, and Lin~\cite{Hardt1986} and the difference-quotient estimates
of Borchers and Garber~\cite{Borchers}, both adapted to the
$\varepsilon$-perturbed Euler--Lagrange equation and to the boundary
flattening permitted by the $C^3$ regularity of $\partial \Om$.

\subsection{Discussion and related literature}\label{ssec:literature} The interaction of nematic liquid crystals with electric and magnetic fields has been extensively studied in the physics literature, with foundational accounts provided in {\cite{degennes1993,virga1994,stewart2004,sonnet2012}}. By contrast, while the mathematical theory of the uncoupled Oseen--Frank model is highly developed {\cite{Ball2017}}, the rigorous analysis of systems driven by nonlinear and nonlocal field interactions remains comparatively underexplored.

Early analytical investigations focused on the existence and partial regularity of coupled electromechanical equilibria. In their seminal work {\cite{Hardt1986}}, Hardt, Kinderlehrer, and Lin established the existence of minimizers in the presence of dielectric effects. By extending classical harmonic-map techniques {\cite{BCL,GW,schoen_uhlenbeck_1983,Si}} to the variable-coefficient energies induced by the dielectric tensor, they demonstrated that minimizers are real-analytic outside a singular set of zero one-dimensional Hausdorff measure.

However, partial regularity does not resolve the global structure or uniqueness of minimizers, which are strongly dictated by the topology of the target manifold $\mathbb{S}^2$. The phenomenon of non-uniqueness, including the existence of infinitely many weakly harmonic maps for nonconstant boundary data, was established in \cite{BethuelBrezisCoron90,Riviere95} and subsequently extended to anisotropic Oseen--Frank energies in \cite{Hong2007}. In the context of electric coupling, we show that this severe non-uniqueness introduces a delicate variational vulnerability: different approximation schemes may cause minimizing sequences to bifurcate toward macroscopically distinct limiting configurations (see the explicit construction in Appendix~\ref{app:bifurcation}).

This multiplicity of states is intimately tied to field-induced bifurcations and macroscopic instabilities. The variational consistency of electrically coupled nematic models was recently clarified in {\cite{gartland2020}}. A prototypical instability mechanism is the Fréedericksz transition, characterized by a sudden reorientation of the director field once the external voltage exceeds a critical threshold. Rigorous stability criteria for this transition in two dimensions demonstrate that the exact nonlinear and nonlocal dielectric coupling strictly raises the critical threshold for destabilization {\cite{gartland2021}}. 

Beyond the primary transition, linearized analyses predict secondary instabilities, such as the formation of periodic stripe domains in nematic films under higher field strengths {\cite{Allender1987}}. Because these analyses typically neglect the nonlocal electrostatic back-reaction, their mathematical validity has remained heuristic. The optimal convergence rate established in \eqref{eq:rate} now places this localized approximation on a rigorous footing in the small-anisotropy limit.

The inherent mathematical complexity of this nonlocal coupling can be further illuminated by contrasting the Oseen--Frank system with the Landau--Lifshitz model of micromagnetics \cite{Di_Fratta_2020}. Both theories describe orientational order via a unit-vector field penalized by a Dirichlet-type energy and coupled to a nonlocal scalar potential via a saddle-point formulation. In micromagnetics, however, the medium is magnetically homogeneous. The magnetization $\bm{m}$ acts solely as a linear divergence source, meaning the magnetostatic potential satisfies the constant-coefficient Poisson equation $\Delta U = \operatorname{div}\bm{m}$ and admits an explicit representation via Newtonian convolution.
In stark contrast, a nematic liquid crystal structurally defines its own dielectric environment. The director $\bm{n}$ dictates the anisotropy of the medium, forcing the electrostatic potential to solve a variable-coefficient equation $\operatorname{div}(\bm{\varepsilon}(\bm{n})\nabla u)=0$. Because the dielectric tensor $\bm{\varepsilon}(\bm{n})$ depends nonlinearly on $\bm{n}$, the coupling is fully nonlinear. Consequently, exact integral convolution representations are unavailable, fundamentally altering the functional framework required for both analysis and numerical simulation (for a detailed comparative discussion, see the supplementary material of~\cite{Gartland_2015}).

These structural impediments highlight the analytical features of our model. In particular, two aspects distinguish the present contribution.

First, the duality formula \eqref{eq:Adual} appears to be new in this context. Existing approaches invariably rely on the min--max formulation \eqref{eq:minmax} or its associated Euler--Lagrange system. Our vector-potential reformulation eliminates the saddle-point bottleneck, placing both variables within a strictly coercive, double-minimization structure perfectly suited for direct methods of the calculus of variations and modern discretization. A structurally related, though physically distinct, saddle-point formulation for harmonic maps was introduced in {\cite{Hu2009}}, where the dual variable acts as a Lagrange multiplier enforcing the unit-length constraint rather than representing the physical electric displacement.

Second, the quantitative limit
\eqref{eq:energybound}--\eqref{eq:rate} provides a rigorous counterpart to
previously established formal expansions {\cite[§4.1.1]{virga1994}}. It
identifies both the algebraic structure responsible for the $\varepsilon^2$
accuracy of the local approximation and the precise role of the
second-variation coercivity in transferring this accuracy to minimizers.
Analogous coercivity-transfer mechanisms have been exploited in Landau--de
Gennes theory {\cite{Majumdar2010,Nguyen2013,Fratta2020}} and in the
ferronematic setting {\cite{Canevari2023}}. The argument presented here adapts
the technical toolkit of uniform $W^{2, p}$ estimates and perturbative
second-variation analysis to the genuinely nonlocal coupling
\eqref{eq:Poisson} that defines the Oseen--Frank--electrostatic problem.

\subsection{Organization of the paper}Section~\ref{sec:duality} is devoted to
the proof of Theorem~\ref{thm:thm1}; Section~\ref{sec:asymptotic} to
Theorem~\ref{thm:thm2}. Appendix~\ref{sec:nondim} records the
non-dimensionalization. Appendix~\ref{sec:regularity} gathers the uniform
$W^{2, 2}$-regularity result for the perturbed minimizers and verifies
Assumption~\ref{ass:coercivity} in two geometric configurations.

\subsection*{Notation}Throughout, $\Om \subset \RR^3$ is a bounded Lipschitz
domain, simply connected with connected boundary unless stated otherwise. We
write $L^2  (\Om, \RR^3)$ for the space of square-integrable vector fields,
with norm $\| \cdot \|_{L^2}$. The spaces $\Hdiv$ and $\Hcurl$ are the
$L^2$-completions under the graph norms of the divergence and curl operators,
respectively. The gauge space $\XN$ is defined in \eqref{eq:XN}, and $\Vspace$
in \eqref{eq:Vspace}. The outward unit normal to $\partial \Om$ is denoted
$\nuvec$. Constants are denoted $C$ and may change from line to line; their
dependence on the data is indicated by subscripts.

\section{Min--min reformulation: proof of
Theorem~\ref{thm:thm1}}\label{sec:duality}

The proof exploits two ingredients: a quadratic completion that transforms the
electrostatic maximization into a perfect square, and the surjectivity of the
curl operator on simply connected Lipschitz domains. Throughout this section,
$\nn \in H^1 (\Om, \jM)$ is fixed, and we abbreviate $A := \Aeps (\nn) = I +
\eps \hspace{0.17em} \nn \otimes \nn$, a uniformly positive symmetric
matrix-valued $L^{\infty}$ field. By the pointwise identity
\eqref{eq:pointwiseeps},
\begin{equation}
  \label{eq:Apos} \min (1, 1 + \eps) \hspace{0.17em} | \zeta |^2
  \hspace{0.27em} \leqslant \hspace{0.27em} A \zeta \cdot \zeta
  \hspace{0.27em} \leqslant \hspace{0.27em} \max (1, 1 + \eps) \hspace{0.17em}
  | \zeta |^2, \qquad \forall \hspace{0.17em} \zeta \in \RR^3 .
\end{equation}

\subsection{Functional setting and isomorphism of the
curl}\label{ssec:setting}We recall the functional analytic framework. For $\gv
\in \Hdiv$, the normal trace $\gv \cdot \nuvec \in H^{- 1 / 2} (\partial \Om)$
is defined by
\begin{equation}
  \label{eq:normaltrace} \langle \gv \cdot \nuvec, \phi \rangle_{\partial \Om}
  := \int_{\Om} \gv \cdot \nabla \phi \dx + \int_{\Om} (\divg \gv) 
  \hspace{0.17em} \phi \dx, \qquad \phi \in H^1 (\Om) ;
\end{equation}
see, e.g., Girault and Raviart~\cite[Thm.~I.2.5]{Girault2012}. The
Coulomb-gauge space $\XN$ is defined in \eqref{eq:XN}; the divergence-free
space $\Vspace$ in \eqref{eq:Vspace}. When $\Om$ is simply connected with
connected boundary, $\divg \gv = 0$ in $\Om$ automatically implies $\langle
\gv \cdot \nuvec, 1 \rangle_{\partial \Om} = 0$ via the divergence theorem, so
no explicit boundary flux condition is needed in the definition of $\Vspace$.

The cornerstone of our duality argument is the following classical result.

\begin{theorem}[Isomorphism of the curl,
\cite{Girault2012}]
  \label{thm:isoCurl}Let $\Om \subset \RR^3$ be a bounded, simply connected
  Lipschitz domain with connected boundary. The operator
  \[ \curl : \XN \to \Vspace \]
  is a topological isomorphism. Explicitly, for every $\gv \in \Vspace$ there
  exists a unique $\gam \in \XN$ such that $\curl \gam = \gv$, and
  \begin{equation}
    \label{eq:L2est} \| \gam \|_{L^2  (\Om, \RR^3)} \leqslant C \| \gv \|_{L^2
    (\Om, \RR^3)} .
  \end{equation}
  If, in addition, $\partial \Om$ is of class $C^{1, 1}$ or $\Om$ is convex,
  then $\gam \in H^1 (\Om, \RR^3)$ and
  \begin{equation}
    \label{eq:H1est} \| \gam \|_{H^1 (\Om, \RR^3)} \leqslant C \| \gv \|_{L^2 
    (\Om, \RR^3)} .
  \end{equation}
\end{theorem}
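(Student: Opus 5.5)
The statement is the classical isomorphism of the curl (Girault--Raviart), and I would prove it in three stages: existence of a vector potential, uniqueness within the gauge $\XN$, and the estimates.

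\emph{Existence.} Given $\gv \in \Vspace$, first observe that the flux condition $\langle \gv\cdot\nuvec,1\rangle_{\partial\Om}=0$ holds by \eqref{eq:normaltrace} with $\phi\equiv 1$. This lets us extend $\gv$ to a divergence-free field on a ball $B \supset \overline{\Om}$. To do this, solve a Neumann problem in the annular region $B\setminus\overline{\Om}$, whose boundary data are the normal trace of $\gv$ on $\partial\Om$ and zero on $\partial B$; the data are compatible exactly because of the zero total flux. Take the gradient of that solution in $B\setminus\overline{\Om}$ and then extend by zero to all of $\RR^3$.

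Next, take a potential of the extended field, $\bm{w} = \curl(\Phi * \tilde{\gv})$, where $\Phi$ is the Newtonian kernel; this gives $\bm{w} \in H^1_{\mathrm{loc}}$ with $\curl \bm{w} = \tilde\gv$. Finally, correct the gauge by setting $\gam = \bm{w} - \nabla q$ in $\Om$, where $q\in H^1(\Om)/\RR$ solves $\Delta q = \divg \bm{w}$ with $\partial_{\nuvec} q = \bm{w}\cdot\nuvec$. Then $\divg\gam = 0$, $\gam\cdot\nuvec=0$ and $\curl\gam=\gv$, so $\gam \in \XN$.

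\emph{Uniqueness.} If $\gam\in\XN$ satisfies $\curl\gam=0$, then simple connectivity gives $\gam=\nabla p$ with $p \in H^1(\Om)$. The gauge conditions then say $\Delta p=0$ and $\partial_{\nuvec} p=0$, so $p$ is constant and $\gam=0$. Continuity of $\curl:\XN\to\Vspace$ is trivial. It follows that $\curl$ is a continuous bijection between Hilbert spaces, since $\XN$ is closed in $\Hcurl$ and $\Vspace$ is closed in $L^2$. The open mapping theorem then gives $\|\gam\|_{\Hcurl}\le C\|\gv\|_{L^2}$, which is \eqref{eq:L2est}. Equivalently, \eqref{eq:L2est} is the Friedrichs inequality $\|\gam\|_{L^2}\le C\|\curl\gam\|_{L^2}$ on $\XN$, obtained by contradiction via the compact embedding of $\XN$ into $L^2$ (Weber's compactness for Lipschitz domains).

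\emph{$H^1$ regularity.} For $C^{1,1}$ or convex domains, invoke the embedding $\Hcurl\cap\Hdiv$ with vanishing normal trace $\hookrightarrow H^1$. For $C^{1,1}$ boundaries this follows from the integration-by-parts identity $\|\nabla\gam\|^2 = \|\curl\gam\|^2+\|\divg\gam\|^2 - \int_{\partial\Om}(\text{curvature})\,\gam\cdot\gam$, with the boundary term controlled by a trace inequality. For convex domains the curvature term has a good sign and one argues by approximation with smooth convex domains. The estimate \eqref{eq:H1est} then follows by combining this with \eqref{eq:L2est}.

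The main obstacle is this last regularity step on merely convex or $C^{1,1}$ domains, together with the compactness in the Lipschitz case. Since both are classical, in the paper I would simply cite \cite{Girault2012} (Thms.~I.3.4--3.6 and their corollaries) for them rather than reprove them.
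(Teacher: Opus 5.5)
The paper does not prove this statement. It quotes it from Girault--Raviart, and the only supporting argument is Remark~\ref{rem:friedrichs}, which gets the $H^1$ bound heuristically from $-\Delta\gam=\curl\gv$ and ``classical elliptic regularity''. Your proposal is a correct reconstruction of the classical proof and says considerably more. It covers the zero-flux extension to a ball, the Newtonian potential $\bm{w}=\curl(\Phi*\tilde{\gv})$, the Neumann gauge correction, and uniqueness via simple connectivity. In the extension step, state explicitly that connectedness of $\partial\Om$ is what makes $B\setminus\overline{\Om}$ connected, so the single flux condition is the only compatibility condition for the Neumann problem. Your $L^2$ bound comes from the open mapping theorem on the closed subspace $\XN$ of $\Hcurl$, which usefully needs no compactness.

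The step to tighten is the $H^1$ regularity. The Gaffney-type identity with the curvature term is an a priori estimate for fields already known to be regular up to the boundary. On its own it does not show that an arbitrary element of $\XN$ lies in $H^1$. You need either a density result for regular tangential fields in $\Hcurl\cap\Hdiv$, or to apply the identity only to solutions on approximating smooth domains.

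For $C^{1,1}$ boundaries, your own existence construction already settles this.
\begin{itemize}
\item You have $\|\bm{w}\|_{H^1(\Om)}\le C\|\gv\|_{L^2}$.
\item The Neumann datum $\bm{w}\cdot\nuvec$ lies in $H^{1/2}(\partial\Om)$ because $\nuvec$ is Lipschitz.
\item So $H^2$-regularity of the Neumann problem gives $q\in H^2(\Om)$, hence $\gam=\bm{w}-\nabla q\in H^1$ with \eqref{eq:H1est}.
\end{itemize}

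For convex $\Om$, $\nuvec$ is only $L^\infty$, so that shortcut fails and your approximation argument is the right tool. Run it on the solution operator rather than on arbitrary elements of $\XN$:
\begin{itemize}
\item Solve on smooth convex $\Om_m\uparrow\Om$; the potentials $\gam_m$ are already in $H^1$ by the $C^{1,1}$ case.
\item Bound $\|\nabla\gam_m\|_{L^2(\Om_m)}\le\|\gv\|_{L^2}$ by the identity, whose boundary term has a good sign.
\item Control $\|\gam_m\|_{L^2}$ by the uniform Poincar\'e--Wirtinger constant of convex sets. Each component of $\gam_m$ has zero mean, since $\divg\gam_m=0$ and $\gam_m\cdot\nuvec=0$.
\item Pass to a weak limit and identify it with $\gam$ by uniqueness in $\XN$.
\end{itemize}
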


\begin{remark}[on Friedrichs--Gaffney inequality]
  \label{rem:friedrichs} The improved estimate \eqref{eq:H1est} is the
  Friedrichs--Gaffney (or Maxwell) inequality. It arises by combining the
  gauge conditions $\divg \gam = 0$ and $\gam \cdot \nuvec = 0$ with the
  vector identity $- \Delta \gam = \curl \curl \gam - \nabla \divg \gam$,
  which reduces to $- \Delta \gam = \curl \gv$; classical elliptic regularity
  then yields $\gam \in H^1$ under either $C^{1, 1}$ boundary regularity or
  convexity \cite[Thm.~I.2.12]{Girault2012}. On non-convex Lipschitz domains
  the inclusion $\XN \subset H^1 (\Om, \RR^3)$ may fail: at reentrant corners,
  the vector potential typically exhibits a corner singularity that prevents
  $\nabla \gam \in L^2$ even when $\gv \in L^2$. The geometric hypothesis in
  \eqref{eq:thm1H1} is therefore essential.
\end{remark}

\subsection{Proof of Theorem~\ref{thm:thm1}}\label{ssec:proofthm1}We divide
the proof into two main steps

\begin{proof}[Proof of Theorem~\ref{thm:thm1}]

\smallskip
\noindent  \emph{Step 1. Reformulation in terms of divergence-free fields.} We
  work with the dual functional $\mathcal{A}_{\eps} (\nn, \gv)$ introduced in
  \eqref{eq:Adual}, restricted to divergence-free fields $\gv \in \Vspace$.
  Define the candidate minimizer
  \begin{equation}
    \label{eq:minimizerg} \gv_{\ast} := - A \nabla \unn,
  \end{equation}
  where $\unn = \vnn + \xis$ is the solution of \eqref{eq:Poisson}. By
  \eqref{eq:Poisson}, $\divg (A \nabla \unn) = 0$, so $\gv_{\ast} \in
  \Vspace$. We claim that
  \begin{equation}
    \label{eq:claim1} \min_{\gv \in \Vspace} \mathcal{A}_{\eps} (\nn, \gv)
    \hspace{0.27em} = \hspace{0.27em} \mathcal{A}_{\eps} (\nn, \gv_{\ast})
    \hspace{0.27em} = \hspace{0.27em} \mathcal{W}_E (\nn, \vnn),
  \end{equation}
  and that $\gv_{\ast}$ is the unique minimizer.
  
  The argument is a quadratic completion based on the algebraic identity
  \begin{equation}
    \label{eq:keyidentity} \gv + A \nabla \xis \hspace{0.27em} =
    \hspace{0.27em} (\gv - \gv_{\ast}) - A \nabla \vnn,
  \end{equation}
  valid because $A \nabla \xis = A \nabla \unn - A \nabla \vnn = - \gv_{\ast}
  - A \nabla \vnn$. Expanding the quadratic form $A^{- 1} (\bullet) \cdot
  (\bullet)$ applied to \eqref{eq:keyidentity} and integrating over $\Om$, we
  get
  \begin{equation}
    \label{eq:Aexpand} \mathcal{A}_{\eps} (\nn, \gv) = \int_{\Om} A^{- 1} 
    (\gv - \gv_{\ast}) \cdot (\gv - \gv_{\ast}) \dx - 2 \hspace{-0.17em}
    \int_{\Om} (\gv - \gv_{\ast}) \cdot \nabla \vnn \dx + \int_{\Om} A \nabla
    \vnn \cdot \nabla \vnn \dx,
  \end{equation}
  where for the cross term we used $A^{- 1} A = I$.
  
  Observe that the cross integral in \eqref{eq:Aexpand} vanishes. Indeed, $\gv -
  \gv_{\ast} \in \Vspace$ is divergence-free in the distributional sense, and
  $\vnn \in H^1_0 (\Om)$ vanishes on $\partial \Om$. The duality pairing in
  \eqref{eq:normaltrace} applied to $\gv - \gv_{\ast} \in \Hdiv$ and $\vnn \in
  H^1 (\Om)$ gives
  \begin{equation}
    \label{eq:crossvanish} \int_{\Om} (\gv - \gv_{\ast}) \cdot \nabla \vnn \dx
    = - \int_{\Om} \divg (\gv - \gv_{\ast}) \hspace{0.17em} \vnn \dx + \langle
    (\gv - \gv_{\ast}) \cdot \nuvec, \hspace{0.17em} \vnn \rangle_{\partial
    \Om} = 0,
  \end{equation}
  since $\divg (\gv - \gv_{\ast}) = 0$ in $\Om$ and the trace of $\vnn$
  vanishes in $H^{1 / 2} (\partial \Om)$.
  
  Also, the last term in \eqref{eq:Aexpand} identifies with $\mathcal{W}_E
  (\nn, \vnn)$. Indeed, testing \eqref{eq:vsolve} with $\vnn \in H^1_0 (\Om)$
  gives the orthogonality relation
  \begin{equation}
    \label{eq:orth} \int_{\Om} A \nabla \vnn \cdot \nabla \vnn \dx
    \hspace{0.27em} = \hspace{0.27em} - \int_{\Om} A \nabla \xis \cdot \nabla
    \vnn \dx,
  \end{equation}
  whence, from the definition \eqref{eq:WE},
  \begin{equation}
    \label{eq:WEident} \mathcal{W}_E (\nn, \vnn) \hspace{0.27em} =
    \hspace{0.27em} - \hspace{-0.17em} \int_{\Om} A \nabla \vnn \cdot \nabla
    \vnn \dx - 2 \hspace{-0.17em} \int_{\Om} A \nabla \vnn \cdot \nabla \xis
    \dx \hspace{0.27em} = \hspace{0.27em} \int_{\Om} A \nabla \vnn \cdot
    \nabla \vnn \dx .
  \end{equation}
  Combining \eqref{eq:Aexpand}, \eqref{eq:crossvanish}, and
  \eqref{eq:WEident}, we get
  \begin{equation}
    \label{eq:completedsquare} \mathcal{A}_{\eps} (\nn, \gv) \hspace{0.27em} =
    \hspace{0.27em} \int_{\Om} A^{- 1}  (\gv - \gv_{\ast}) \cdot (\gv -
    \gv_{\ast}) \dx \hspace{0.27em} + \hspace{0.27em} \mathcal{W}_E (\nn,
    \vnn) \hspace{0.27em} \geqslant \hspace{0.27em} \mathcal{W}_E (\nn, \vnn),
  \end{equation}
  with equality if and only if $\gv = \gv_{\ast}$ almost everywhere, by the
  uniform positivity of $A^{- 1}$. Since $\gv_{\ast} \in \Vspace$, the minimum
  is attained and unique, establishing \eqref{eq:claim1}.
  
 \smallskip
\noindent  \emph{Step 2. From divergence-free fields to vector potentials.}
  We now relate $\mathcal{A}_{\eps} (\nn, \cdot)$ on $\Vspace$ to its lift on the
  various spaces of vector potentials. By construction, for $\gam \in \Hcurl$,
  \begin{equation} \label{eq:Atocirc}
  \mathcal{A}_{\eps} (\nn, \curl \gam) = \min_{\gv \in
    \Vspace} \mathcal{A}_{\eps} (\nn, \gv) \hspace{0.27em} \Longleftrightarrow
    \hspace{0.27em} \curl \gam = \gv_{\ast} .
  \end{equation}
  The desired identities then follow from a careful description of the range
  of the curl operator on each domain.
  \begin{enumerate}[(i)]
    \item \emph{Minimization over} $\XN$. By Theorem~\ref{thm:isoCurl}, the
    operator $\curl : \XN \to \Vspace$ is a bijection. Hence for every $\gv
    \in \Vspace$ there is a unique $\gam \in \XN$ with $\curl \gam = \gv$, and
    minimization of $\mathcal{A}_{\eps} (\nn, \curl \gam)$ over $\gam \in \XN$
    is equivalent to minimization of $\mathcal{A}_{\eps} (\nn, \gv)$ over $\gv
    \in \Vspace$. The unique minimum is attained at the unique element
    $\gam_{\ast} \in \XN$ satisfying $\curl \gam_{\ast} = \gv_{\ast} = - A
    \nabla \unn$, which is precisely \eqref{eq:curlgamma}.
    \smallskip
    
    \item \emph{Minimization over} $\Hcurl$. Trivially $\XN \subset
    \Hcurl$, so
    \[ \min_{\gam \in \Hcurl} \mathcal{A}_{\eps} (\nn, \curl  \gam) \leqslant
       \min_{\gam \in \XN} \mathcal{A}_{\eps} (\nn, \curl  \gam)
       =\mathcal{A}_{\eps} (\nn, \curl \gam_{\ast}) . \]
    
    Conversely, for any $\gam \in \Hcurl$, $\curl \gam \in \Vspace$, so
    \[ \mathcal{A}_{\eps} (\nn, \curl \gam) \geqslant
       \min_{\bm{g} \in \Vspace} \mathcal{A}_{\eps} (\nn,
       \bm{g}) =\mathcal{A}_{\eps} (\nn, \curl
       \gam_{\ast}) . \]
    The two minima coincide; however, uniqueness fails on $\Hcurl$ because of
    the invariance of the $\curl$ operator on gradient fields.
    \smallskip
    
    \item \emph{Minimization over $H^1 (\Om, \RR^3)$ under $C^{1, 1}$
    regularity.} If $\partial \Om$ is of class $C^{1, 1}$ (or $\Om$ is
    convex), Theorem~\ref{thm:isoCurl} yields $\gam_{\ast} \in H^1 (\Om,
    \RR^3)$, so the unique $\XN$-minimizer already lies in $H^1$. Hence
    \[ \min_{\gam \in H^1 (\Om, \RR^3) \cap \XN} \mathcal{A}_{\eps} (\nn,
       \curl \gam) \hspace{0.27em} = \hspace{0.27em} \mathcal{A}_{\eps} (\nn,
       \curl \gam_{\ast}) \hspace{0.27em} = \hspace{0.27em} \min_{\gam \in
       \XN} \mathcal{A}_{\eps} (\nn, \curl \gam) . \]
    For the unconstrained minimization over $H^1 (\Om, \RR^3)$, the $\curl$
    operator is surjective onto $\Vspace$ under the same regularity (any
    $\XN$-representative may be modified by a curl-free $H^1$ gradient, which
    lies in $H^1$ under the present regularity), so the minimum value
    coincides with $\min_{\bm{g} \in \Vspace}
    \mathcal{A}_{\eps} (\nn, \bm{g})$.
  \end{enumerate}
  
  This completes the proof of \eqref{eq:thm1main} and \eqref{eq:thm1H1}.
\end{proof}

\begin{remark}[Heuristic origin of $\mathcal{A}_{\eps}$ via Fenchel--Rockafellar]
  \label{rem:FR}The dual functional $\mathcal{A}_{\eps}$
  can also be motivated through Fenchel--Rockafellar duality
  \cite{Temam1999}. Writing $\mathcal{W}_E (\nn, \cdot) = - \varphi - \psi$
  on $H^1_0 (\Om)$ with
  \[ \varphi (v) := \int_{\Om} A \nabla v \cdot \nabla v \dx, \qquad \psi (v)
     := 2 \hspace{-0.17em} \int_{\Om} A \nabla \xis \cdot \nabla v \dx, \]
  the Fenchel--Rockafellar theorem identifies
  \begin{equation}
    \label{eq:FR} \max_{v \in H^1_0 (\Om)} \mathcal{W}_E (\nn, v)
    \hspace{0.27em} = \hspace{0.27em} \min_{f \in H^{- 1} (\Om)}
    [\varphi^{\ast} (- f) + \psi^{\ast} (f)] .
  \end{equation}
  Identifying $H^{- 1} (\Om)$ with $- \divg L^2  (\Om, \RR^3)$ and
  parametrizing $f = - \divg \bm{h}$ with
  $\bm{h} \in L^2  (\Om, \RR^3)$, the $L^2$ Hodge
  decomposition splits $\bm{h}$ into a divergence-free
  part $\gv \in \Vspace$ and a gradient. After computing the conjugates
  $\varphi^{\ast}$ and $\psi^{\ast}$ and eliminating the gradient component,
  the minimization reduces to one over $\Vspace$ with integrand given
  precisely $\mathcal{A}_{\eps} (\nn, \gv)$ as in \eqref{eq:Adual}.
  Theorem~\ref{thm:isoCurl} subsequently identifies $\Vspace$ as $\curl
  (\XN)$, yielding the vector-potential representation $\mathcal{A}_{\eps}
  (\nn, \curl \gam)$. This constructive derivation is conceptually
  illuminating and, indeed, was the route through which we originally obtained
  the explicit form of the dual functional. However, it is rather laborious.
  Once the dual functional has been identified, its correctness can instead be
  verified directly through the quadratic-completion argument above, which is
  both shorter and more transparent. For this reason, we adopt the latter
  approach in the presentation.
\end{remark}

\section{Asymptotic limit and quantitative rates: proof of
Theorem~\ref{thm:thm2}}\label{sec:asymptotic}

The proof unfolds in three parts: an algebraic comparison yielding the energy
bound \eqref{eq:energybound}; a $\Gamma$-convergence argument supplying the
strong $H^1$-convergence of minimizers; and a perturbative coercivity transfer
that converts the energy gap into the quantitative rate \eqref{eq:rate}.

\subsection{The energy gap: proof of \texorpdfstring{\eqref{eq:energybound}}{the error bound}} \label{ssec:energygap}By the decomposition
\eqref{eq:Feps}, the gap between $\mathcal{F}_{\varepsilon}$ and
$\mathcal{G}_{\varepsilon}$ is precisely the nonlocal electrostatic
functional:
\begin{equation}
  \label{eq:gapisA} \mathcal{F}_{\varepsilon} (\nn, \gv)
  -\mathcal{G}_{\varepsilon} (\nn) \hspace{0.27em} = \hspace{0.27em}
  \mathcal{A}_{\varepsilon} (\nn, \gv) \hspace{0.27em} \geqslant
  \hspace{0.27em} 0 \qquad \forall \hspace{0.17em} \gv \in \Vspace,
\end{equation}
the nonnegativity following from the positive-definiteness of $\Aeps (\nn)^{-
1}$. The bound \eqref{eq:energybound} therefore reduces to a uniform upper
bound on $\min_{\gv} \mathcal{A}_{\varepsilon} (\nn, \cdot)$, which is the
object of the following Lemma. Indeed, once Lemma~\ref{lem:energygap} is
proved, estimate \eqref{eq:energybound} of Theorem~\ref{thm:thm2} follows
immediately since the constant $C_{\xi}$ in \eqref{eq:gapuniform} does not
depend on $\nn$.

\begin{lemma}[Energy gap]
  \label{lem:energygap}There exists a constant $C_{\xi} > 0$, depending only
  on $\| \nabla \xis \|_{L^{\infty}}$ and $| \Om |$, such that for every
  $\varepsilon \in (0, 1)$ and every $\nn \in H^1 (\Om, \jM)$,
  \begin{equation}
    \label{eq:gapuniform} 0 \hspace{0.27em} \leqslant \hspace{0.27em}
    \min_{\gv \in \Vspace} \mathcal{A}_{\varepsilon} (\nn, \gv)
    \hspace{0.27em} = \hspace{0.27em} \min_{\gv \in \Vspace}
    \mathcal{F}_{\varepsilon} (\nn, \gv) -\mathcal{G}_{\varepsilon} (\nn)
    \hspace{0.27em} \leqslant \hspace{0.27em} C_{\xi}  \hspace{0.17em}
    \varepsilon^2 .
  \end{equation}
  Moreover, the unique minimizer $\gv_{\ast} \in \Vspace$ satisfies
  \begin{equation}
    \label{eq:gminbound} \| \gv_{\ast} + \nabla \xis \|_{L^2 (\Om)}
    \hspace{0.27em} \leqslant \hspace{0.27em} \sqrt{C_{\xi}} \varepsilon .
  \end{equation}
\end{lemma}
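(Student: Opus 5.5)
Since $\mathcal{A}_\varepsilon(\nn,\cdot)\geq 0$ pointwise and $\mathcal{F}_\varepsilon-\mathcal{G}_\varepsilon=\mathcal{A}_\varepsilon$ by \eqref{eq:gapisA}, the lower bound and the middle identity in \eqref{eq:gapuniform} are immediate. For the upper bound we test the minimization with the competitor $\gv=-\nabla\xis$. Because $\xis$ is harmonic, this field is divergence-free, so $\gv\in\Vspace$.

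With $A=\Aeps(\nn)=I+\varepsilon\,\nn\otimes\nn$ and $|\nn|=1$, the competitor gives
\[
\gv+A\nabla\xis=(A-I)\nabla\xis=\varepsilon\,(\nn\cdot\nabla\xis)\,\nn .
\]
We also have $A^{-1}=I-\tfrac{\varepsilon}{1+\varepsilon}\nn\otimes\nn$, so $A^{-1}\nn\cdot\nn=1-\tfrac{\varepsilon}{1+\varepsilon}=\tfrac{1}{1+\varepsilon}$. Therefore
\[
\min_{\gv\in\Vspace}\mathcal{A}_\varepsilon(\nn,\gv)\leq\mathcal{A}_\varepsilon(\nn,-\nabla\xis)=\frac{\varepsilon^2}{1+\varepsilon}\int_\Om(\nn\cdot\nabla\xis)^2\dx\leq\varepsilon^2\,\|\nabla\xis\|_{L^\infty}^2\,|\Om| .
\]
This gives \eqref{eq:gapuniform} with $C_\xi:=\|\nabla\xis\|_{L^\infty}^2|\Om|$, a constant independent of $\nn$ and of $\varepsilon\in(0,1)$. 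The exact cancellation in the first display is the source of the $\varepsilon^2$ scaling, as opposed to $\varepsilon$.

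For \eqref{eq:gminbound} we use the completed-square identity \eqref{eq:completedsquare} from the proof of Theorem~\ref{thm:thm1}. Applied to $\gv=-\nabla\xis$, it reads
\[
\mathcal{A}_\varepsilon(\nn,-\nabla\xis)=\int_\Om A^{-1}(\nabla\xis+\gv_\ast)\cdot(\nabla\xis+\gv_\ast)\dx+\mathcal{A}_\varepsilon(\nn,\gv_\ast).
\]
Since $\mathcal{A}_\varepsilon(\nn,\gv_\ast)\geq0$, the first term on the right is at most $\tfrac{\varepsilon^2}{1+\varepsilon}\int_\Om(\nn\cdot\nabla\xis)^2\dx$. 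The smallest eigenvalue of $A^{-1}$ is $\tfrac{1}{1+\varepsilon}$, because $\varepsilon>0$. Hence
\[
\frac{1}{1+\varepsilon}\,\|\gv_\ast+\nabla\xis\|_{L^2}^2\leq\frac{\varepsilon^2}{1+\varepsilon}\int_\Om(\nn\cdot\nabla\xis)^2\dx .
\]
The factors $1+\varepsilon$ cancel, leaving $\|\gv_\ast+\nabla\xis\|_{L^2}^2\leq\varepsilon^2\|\nabla\xis\|_{L^\infty}^2|\Om|=C_\xi\,\varepsilon^2$, as claimed.

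No step here is genuinely difficult. The only point needing care is the bookkeeping of the factors $(1+\varepsilon)^{\pm1}$, so that the same constant $C_\xi$ appears in both estimates without extra factors. That is why the sharp eigenvalue $\tfrac{1}{1+\varepsilon}$ of $A^{-1}$ is used, rather than a cruder bound.
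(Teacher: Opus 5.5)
Your proof is correct. The upper bound is the paper's own argument: the same competitor $-\nabla\xis$ and the same Sherman--Morrison computation.

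For \eqref{eq:gminbound} you take a different and sharper route. The paper does not use \eqref{eq:completedsquare}. Instead it proves the algebraic lower bound \eqref{eq:lowerbound}, $\mathcal{A}_{\varepsilon}(\nn,\gv)\geqslant \frac{1}{2(1+\varepsilon)}\|\gv+\nabla\xis\|_{L^2}^2-\frac{\varepsilon^2}{1+\varepsilon}\int_\Om(\nn\cdot\nabla\xis)^2\dx$. This comes from $\Aeps(\nn)^{-1}\geqslant\frac{1}{1+\varepsilon}I$ and Young's inequality $|a+b|^2\geqslant\frac12|a|^2-|b|^2$. The paper then evaluates this at $\gv_\ast$ and combines it with the upper bound. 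The Young step costs a factor $4$, which is why the paper inflates $C_\xi$ to $4|\Om|\|\nabla\xis\|_{L^\infty}^2$.

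Your argument uses the exact orthogonality from Step~1 of the proof of Theorem~\ref{thm:thm1}. That step needs neither simple connectivity nor a connected boundary, so relying on it is harmless here. It identifies the weighted distance $\int_\Om \Aeps(\nn)^{-1}(\gv_\ast+\nabla\xis)\cdot(\gv_\ast+\nabla\xis)\dx$ exactly with the excess $\mathcal{A}_{\varepsilon}(\nn,-\nabla\xis)-\mathcal{A}_{\varepsilon}(\nn,\gv_\ast)$. Hence nothing is lost, and both estimates hold with the single constant $C_\xi=|\Om|\|\nabla\xis\|_{L^\infty}^2$.

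In exchange, the paper's route does not depend on the structure of $\gv_\ast$. Its lower bound is pointwise algebraic and holds for every $\gv\in L^2(\Om,\RR^3)$, divergence-free or not. So it directly bounds $\|\gv+\nabla\xis\|_{L^2}$ for any field with $\mathcal{A}_{\varepsilon}(\nn,\gv)=O(\varepsilon^2)$, such as a numerical near-minimizer, without appealing to Theorem~\ref{thm:thm1}.
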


\begin{remark}[Physical interpretation]
  The unique minimizer $\gv_{\ast} = - \Aeps (\nn) \nabla \unn$ represents the
  dimensionless electric displacement field. Its membership in the
  divergence-free space $\Vspace$ precisely encodes Gauss's law for
  dielectrics in the absence of free charges. Furthermore, the estimate
  \eqref{eq:gminbound} provides a rigorous bound on the dielectric
  back-reaction: it guarantees that the overall $L^2$-deviation of the actual
  displacement field $\gv_{\ast}$ from the rigid, isotropic background field
  $- \nabla \xis$ is strictly controlled, scaling at most \emph{linearly}
  with the dielectric anisotropy $\varepsilon$.
\end{remark}

\begin{remark}
  For the proof we use the algebraic identity (see Sherman--Morrison formula
  for the inverse of a rank-1 update)
  \begin{equation}
    \label{eq:idmatrix} (I + \varepsilon \hspace{0.17em} \nn \otimes \nn)^{-
    1} = I - \frac{\varepsilon}{1 + \varepsilon} \hspace{0.17em} \nn \otimes
    \nn,
  \end{equation}
  valid for $\varepsilon > - 1$ and any $\nn \in \jM$.
\end{remark}

\begin{proof}
  Throughout the proof, $C_{\xi}$ denotes a generic positive constant
  depending only on $\| \nabla \xis \|_{L^{\infty}}$ and $| \Om |$, which may
  change from line to line. Nonnegativity in \eqref{eq:gapuniform} is
  \eqref{eq:gapisA}; we need only the upper bound.
  
  \smallskip
  
\noindent \emph{Upper bound.} By harmonicity of $\xis$, $- \nabla \xis
  \in \Vspace$, and it may be used as a competitor. With $\gv = - \nabla
  \xis$, the inner integrand of $\mathcal{A}_{\varepsilon}$ becomes
  \begin{align}
    \gv + (I + \varepsilon \hspace{0.17em} \nn \otimes \nn) \nabla \xis =
    \varepsilon (\nn \otimes \nn) \nabla \xis = \varepsilon (\nn \cdot \nabla
    \xis) \hspace{0.17em} \nn . & 
  \end{align}
  Hence
  \begin{align}
    \mathcal{A}_{\varepsilon} (\nn, - \nabla \xis) & = \int_{\Om} \left[
    \left( I - \tfrac{\varepsilon}{1 + \varepsilon} \nn \otimes \nn \right)
    \varepsilon (\nn \cdot \nabla \xis) \hspace{0.17em} \nn \right] \cdot
    \varepsilon (\nn \cdot \nabla \xis) \hspace{0.17em} \nn  \dx \nonumber\\
    & = \frac{\varepsilon^2}{1 + \varepsilon}  \hspace{-0.17em} \int_{\Om}
    (\nn \cdot \nabla \xis)^2 \dx, 
  \end{align}
  and therefore,
  \begin{equation}
    \label{eq:upperbound} \min_{\gv \in \Vspace} \mathcal{A}_{\varepsilon}
    (\nn, \gv) \hspace{0.27em} \leqslant \hspace{0.27em}
    \mathcal{A}_{\varepsilon} (\nn, - \nabla \xis) \hspace{0.27em} =
    \hspace{0.27em} \frac{\varepsilon^2}{1 + \varepsilon}  \hspace{-0.17em}
    \int_{\Om} (\nn \cdot \nabla \xis)^2 \dx \hspace{0.27em} \leqslant
    \hspace{0.27em} C_{\xi}  \hspace{0.17em} \varepsilon^2,
  \end{equation}
  with $C_{\xi} := 4 | \Om | \| \nabla \xis \|_{L^{\infty}}^2$ and the factor
  $4$ used here so that the same $C_{\xi}$ can be used also for the refined
  lower bound below.
  
   \smallskip
  
\noindent\emph{Refined lower bound.} Let $\gv \in \Vspace$. To simplify notation,
  we introduce the vector field $\bm{w} := \gv + \Aeps
  (\nn) \nabla \xis$, which expands to $\bm{w} = (\gv +
  \nabla \xis) + \varepsilon (\nn \cdot \nabla \xis) \nn$.
  Using the algebraic identity \eqref{eq:idmatrix} for the inverse matrix
  $\Aeps (\nn)^{- 1}$, the integrand of the dual functional can be pointwise
  bounded from below:
  \begin{equation}
    \Aeps (\nn)^{- 1} \bm{w} \cdot
    \bm{w} \hspace{0.27em} = \hspace{0.27em} \left[
    \left( I - \frac{\varepsilon}{1 + \varepsilon}  \nn \otimes \nn \right)
    \bm{w} \right] \cdot \bm{w}
    \hspace{0.27em} \geqslant \hspace{0.27em} \frac{1}{1 + \varepsilon}
    \hspace{0.17em} |\bm{w}|^2 .
  \end{equation}
  We then apply the elementary algebraic Young's inequality $|a + b|^2
  \geqslant \frac{1}{2} |a|^2 - |b|^2$ to the expanded form of
  $\bm{w}$. This yields:
  \begin{equation}
    |\bm{w}|^2 \hspace{0.27em} \geqslant \hspace{0.27em}
    \frac{1}{2} | \gv + \nabla \xis |^2 - \varepsilon^2  (\nn \cdot \nabla
    \xis)^2 .
  \end{equation}
  Substituting this into the pointwise bound and integrating over the domain
  $\Om$, we obtain the refined lower bound for the functional:
  \begin{align}
    \mathcal{A}_{\varepsilon} (\nn, \gv) & \geqslant \frac{1}{2 (1 +
    \varepsilon)} \hspace{0.17em} \| \gv + \nabla \xis \|_{L^2}^2 -
    \frac{\varepsilon^2}{1 + \varepsilon}  \hspace{-0.17em} \int_{\Om} (\nn
    \cdot \nabla \xis)^2 \dx .  \label{eq:lowerbound}
  \end{align}
  Note that \eqref{eq:gapuniform} is already established by
  \eqref{eq:upperbound} and the trivial bound $\min \mathcal{A}_{\varepsilon}
  \geqslant 0$. We now use \eqref{eq:lowerbound} solely to establish the
  $L^2$-estimate \eqref{eq:gminbound}. Rearranging \eqref{eq:lowerbound}
  evaluated at the minimizer $\gv_{\ast}$, and applying the upper bound
  \eqref{eq:upperbound} to $\mathcal{A}_{\varepsilon} (\nn, \gv_{\ast})$, we
  get:
  \begin{align}
    \frac{1}{2 (1 + \varepsilon)}  \hspace{0.17em} \| \gv_{\ast} + \nabla \xis
    \|_{L^2}^2 & \hspace{0.27em} \leqslant \hspace{0.27em}
    \mathcal{A}_{\varepsilon} (\nn, \gv_{\ast}) + \frac{\varepsilon^2}{1 +
    \varepsilon}  \int_{\Om} (\nn \cdot \nabla \xis)^2 \dx \\
    & \hspace{0.27em} \leqslant \hspace{0.27em} \frac{2 \hspace{0.17em}
    \varepsilon^2}{1 + \varepsilon}  \int_{\Om} (\nn \cdot \nabla \xis)^2 \dx. 
  \end{align}
  Multiplying both sides by $2 (1 + \varepsilon)$ gives:
  \begin{equation}
    \| \gv_{\ast} + \nabla \xis \|_{L^2}^2 \hspace{0.27em} \leqslant
    \hspace{0.27em} 4 \varepsilon^2  \int_{\Om} (\nn \cdot \nabla \xis)^2 \dx
    \hspace{0.27em} \leqslant \hspace{0.27em} C_{\xi}  \hspace{0.17em}
    \varepsilon^2,
  \end{equation}
  where the spatial factor $4 | \Om | \| \nabla \xis \|_{L^{\infty}}^2$ has
  been absorbed into the generic constant $C_{\xi}$.
\end{proof}

\subsection{\texorpdfstring{$\Gamma$}{Gamma}-convergence and \texorpdfstring{$H^1$}{H1}-convergence of
minimizers}\label{ssec:gamma}

It is standard to show that the uncoupled family $\mathcal{G}_{\varepsilon}$
$\Gamma$-converges to $\mathcal{G}_0$.

\begin{proposition}[$\Gamma$-convergence and energy convergence]
  \label{prop:gamma}The uncoupled family $\mathcal{G}_{\varepsilon}$
  $\Gamma$-converges to $\mathcal{G}_0$ {{\em (\/}}see {{\em
  \eqref{eq:G0fun}\/}}{{\em )\/}} in the weak topology of $H^1 (\Om)$ as
  $\varepsilon \to 0$. Moreover, for any family $\{ \nse \}_{\varepsilon > 0}$
  of global minimizers of $\mathcal{G}_{\varepsilon}$ in $\mathcal{A}_{\nn_b}
  (\Om)$, there exists a subsequence (not relabeled) and a global minimizer
  $\ns$ of $\mathcal{G}_0$ in $\mathcal{A}_{\nn_b} (\Om)$ such that
  \begin{equation}
    \label{eq:strongH1conv} \nse \to \ns \quad \text{strongly in } H^1 (\Om,
    \RR^3)  \text{as } \varepsilon \to 0.
  \end{equation}
\end{proposition}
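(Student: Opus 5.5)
The plan is to treat $\mathcal{G}_{\varepsilon}$ as a uniformly small, continuous perturbation of $\mathcal{G}_0$ and then apply the fundamental theorem of $\Gamma$-convergence. The starting point is the pointwise identity
\[
\mathcal{G}_{\varepsilon}(\nn)-\mathcal{G}_0(\nn) = -\varepsilon\int_{\Om}(\nn\cdot\nabla\xis)^2\dx .
\]
Since $|\nn|=1$, this gives the uniform bound $|\mathcal{G}_{\varepsilon}(\nn)-\mathcal{G}_0(\nn)|\leqslant \varepsilon\,\|\nabla\xis\|_{L^2}^2$ for every $\nn\in H^1(\Om,\jM)$. For this step, $\xis\in H^1$ is enough.

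\emph{Liminf inequality.} Take $\nn_\varepsilon\rightharpoonup\nn$ weakly in $H^1$ within $\mathcal{A}_{\nn_b}(\Om)$. Rellich gives strong $L^2$ convergence and a.e. convergence along a subsequence, so $|\nn|=1$ a.e. Because the trace operator is weakly continuous, $\nn=\nn_b$ on $\partial\Om$, and hence the admissible class is weakly closed. Weak lower semicontinuity of the Dirichlet term, together with the uniform bound above, then gives $\liminf\mathcal{G}_{\varepsilon}(\nn_\varepsilon)\geqslant\mathcal{G}_0(\nn)$.

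\emph{Recovery sequence.} We take the constant sequence $\nn_\varepsilon=\nn$, for which $\mathcal{G}_{\varepsilon}(\nn)\to\mathcal{G}_0(\nn)$ by the same bound. Outside $\mathcal{A}_{\nn_b}$ all functionals are set to $+\infty$, consistently.

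\emph{Compactness.} A fixed competitor in $\mathcal{A}_{\nn_b}(\Om)$ (for instance a minimizer of $\mathcal{G}_0$, which exists by the direct method) shows that $\min\mathcal{G}_{\varepsilon}$ is bounded above uniformly. Since $\mathcal{G}_{\varepsilon}\geqslant\alpha\|\nabla\nn\|^2_{L^2}-(1+\varepsilon)\|\nabla\xis\|^2_{L^2}$, the family $\nse$ is bounded in $H^1$. We therefore extract $\nse\rightharpoonup\ns$ weakly in $H^1$. The standard argument then shows that $\ns$ minimizes $\mathcal{G}_0$: for any competitor $\bm{m}$,
\[
\mathcal{G}_0(\ns)\leqslant\liminf\mathcal{G}_{\varepsilon}(\nse)\leqslant\limsup\mathcal{G}_{\varepsilon}(\bm{m})=\mathcal{G}_0(\bm{m}),
\]
and we also obtain $\min\mathcal{G}_{\varepsilon}\to\min\mathcal{G}_0$.

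\emph{Upgrade to strong convergence.} This is the only step with any real content. Taking $\bm{m}=\ns$ above gives $\mathcal{G}_{\varepsilon}(\nse)\to\mathcal{G}_0(\ns)$. Combined with the uniform $O(\varepsilon)$ bound, this yields $\mathcal{G}_0(\nse)\to\mathcal{G}_0(\ns)$. The term $\int|\nabla\xis|^2$ is constant, so $\|\nabla\nse\|_{L^2}\to\|\nabla\ns\|_{L^2}$. Weak convergence plus convergence of norms in the Hilbert space $L^2$ gives $\nabla\nse\to\nabla\ns$ strongly. Together with the strong $L^2$ convergence from Rellich, this proves \eqref{eq:strongH1conv}.

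I do not expect a genuine obstacle. The one point needing care is that the perturbation is uniformly small only because of the constraint $|\nn|=1$. This uniformity is exactly what allows the energy convergence of $\mathcal{G}_{\varepsilon}$ to be transferred to the Dirichlet norms.
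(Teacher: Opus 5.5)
Your proposal is correct and follows essentially the same route as the paper: the identity $\mathcal{G}_{\varepsilon}(\nn)=\mathcal{G}_0(\nn)-\varepsilon\int_{\Om}(\nn\cdot\nabla\xis)^2\dx$ with a uniform $O(\varepsilon)$ bound from $|\nn|=1$, a fixed competitor for $H^1$-compactness, the fundamental theorem of $\Gamma$-convergence for minimality and energy convergence, and finally convergence of Dirichlet norms plus weak convergence to obtain strong $H^1$ convergence. The differences are only cosmetic: you compare with a minimizer of $\mathcal{G}_0$ rather than $\nn_b$, and you write out the weak closedness of $\mathcal{A}_{\nn_b}(\Om)$ and the liminf--limsup chain that the paper treats as standard.
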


\begin{proof}
  For the $\Gamma$-convergence in the weak topology of $H^1 (\Om)$ it is
  sufficient to recall that for any $\nn \in \mathcal{A}_{\nn_b} (\Om)$, the
  functional $\mathcal{G}_{\varepsilon} (\nn)$ relates to the limit functional
  $\mathcal{G}_0 (\nn)$ via the identity
  \begin{equation}
    \label{eq:Geps_G0_relation} \mathcal{G}_{\varepsilon} (\nn)
    \hspace{0.27em} = \hspace{0.27em} \mathcal{G}_0 (\nn) - \varepsilon
    \int_{\Om} (\nn \cdot \nabla \xis)^2 \dx .
  \end{equation}
  Hence, since $| \nn | = 1$ a.e. in $\Om$, the perturbation term is uniformly
  bounded: $0 \leqslant \int_{\Om} (\nn \cdot \nabla \xis)^2 \dx \leqslant \|
  \nabla \xis \|_{L^2 (\Om)}^2$.{\smallskip}
  
  {\noindent}{{\em Convergence of minimizers.\/}} Let $\{ \nse \}_{\varepsilon
  > 0}$ be a family of global minimizers of $\mathcal{G}_{\varepsilon}$. By
  testing $\mathcal{G}_{\varepsilon}$ against the fixed boundary extension
  $\nn_b \in \mathcal{A}_{\nn_b} (\Om)$, we see that
  $\mathcal{G}_{\varepsilon} (\nse) \leqslant \mathcal{G}_{\varepsilon}
  (\nn_b) \leqslant \mathcal{G}_0 (\nn_b) < \infty$. Also, by
  \eqref{eq:Geps_G0_relation} it is clear that $\{ \nse \}_{\varepsilon > 0}$
  is uniformly bounded in $H^1$; hence, there exists a subsequence such that
  $\nse \rightharpoonup \ns$ weakly in $H^1 (\Om, \RR^3)$ and strongly in $L^2
  (\Om, \RR^3)$, with $\ns \in \mathcal{A}_{\nn_b} (\Om)$. By the fundamental
  theorem of $\Gamma$-convergence, $\ns$ is a global minimizer of
  $\mathcal{G}_0$ over $\mathcal{A}_{\nn_b} (\Om)$, and the minimum energies
  converge:
  \begin{equation}
    \label{eq:energy_conv} \lim_{\varepsilon \to 0} \mathcal{G}_{\varepsilon}
    (\nse) \hspace{0.27em} = \hspace{0.27em} \mathcal{G}_0 (\ns) .
  \end{equation}
  To conclude the strong $H^1$-convergence, we isolate the Dirichlet energy of
  $\nse$ using \eqref{eq:Geps_G0_relation}:
  \begin{equation}
    \alpha \| \nabla \nse \|_{L^2 (\Om)}^2 \hspace{0.27em} = \hspace{0.27em}
    \mathcal{G}_{\varepsilon} (\nse) + \int_{\Om} | \nabla \xis |^2 \dx +
    \varepsilon \int_{\Om} (\nse \cdot \nabla \xis)^2 \dx .
  \end{equation}
  Passing to the limit as $\varepsilon \to 0$ and invoking
  \eqref{eq:energy_conv} together with the uniform bound on the perturbation
  integral, we deduce
  \begin{equation}
    \lim_{\varepsilon \to 0} \alpha \| \nabla \nse \|_{L^2 (\Om)}^2
    \hspace{0.27em} = \hspace{0.27em} \mathcal{G}_0 (\ns) + \int_{\Om} |
    \nabla \xis |^2 \dx \hspace{0.27em} = \hspace{0.27em} \alpha \| \nabla \ns
    \|_{L^2 (\Om)}^2 .
  \end{equation}
  Because $\{ \nse \}_{\varepsilon > 0}$ converges to $\ns$ weakly in $H^1
  (\Om, \RR^3)$ and their gradient $L^2$-norms converge to the gradient
  $L^2$-norm of the weak limit, $\{\nabla \nse \}_{\varepsilon > 0}$ converges
  strongly in $L^2 (\Om, \RR^3)$. Overall, $\nse \to \ns$ strongly in $H^1
  (\Om, \RR^3)$, completing the proof.
\end{proof}

As a byproduct of Proposition~\ref{prop:gamma} and the energy estimate \eqref{eq:energybound} already established in Theorem~\ref{thm:thm2}, it follows that the coupled family $\min_{\gv \in \Vspace} \mathcal{F}_{\varepsilon} (\nn,
\gv)$ also $\Gamma$-converges to $\mathcal{G}_0$.

\begin{proposition}[$\Gamma$-convergence of the true physical energy]\label{prop:gamma2}
  The coupled energy
  \begin{equation}
    \mathcal{E}_{\varepsilon} (\nn) = \min_{\gv \in \Vspace}
    \mathcal{F}_{\varepsilon} (\nn, \gv) .
  \end{equation}
  $\Gamma$-converges to $\mathcal{G}_0$ in the weak topology of $H^1 (\Om)$ as
  $\varepsilon \to 0$.
  
  Furthermore, for any family $\{ \nse \}_{\varepsilon > 0}$ of global
  minimizers of $\mathcal{E}_{\varepsilon}$ in $\mathcal{A}_{\nn_b} (\Om)$,
  there exists a subsequence (not relabeled) and a global minimizer $\ns$ of
  $\mathcal{G}_0$ in $\mathcal{A}_{\nn_b} (\Om)$ such that
  \begin{equation}
    \label{eq:strongH1conv2} \nse \to \ns \quad \text{strongly in } H^1 (\Om,
    \RR^3)  \text{as } \varepsilon \to 0.
  \end{equation}
\end{proposition}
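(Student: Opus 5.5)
The plan is to treat $\mathcal{E}_\varepsilon$ as a uniform perturbation of $\mathcal{G}_\varepsilon$, and then reuse the argument of Proposition~\ref{prop:gamma} almost verbatim.

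By \eqref{eq:Fmin} and Lemma~\ref{lem:energygap}, for every $\nn \in \mathcal{A}_{\nn_b}(\Om)$ and $\varepsilon\in(0,1)$ we have
\begin{equation*}
  \mathcal{G}_\varepsilon(\nn) \leqslant \mathcal{E}_\varepsilon(\nn) \leqslant \mathcal{G}_\varepsilon(\nn) + C_\xi \varepsilon^2 .
\end{equation*}
Combining this with \eqref{eq:Geps_G0_relation} gives the uniform estimate
\begin{equation*}
  \sup_{\nn}\, |\mathcal{E}_\varepsilon(\nn) - \mathcal{G}_0(\nn)| \leqslant \varepsilon \|\nabla \xis\|_{L^2}^2 + C_\xi\varepsilon^2 \longrightarrow 0 .
\end{equation*}
The $\Gamma$-convergence then follows from the standard fact that uniform convergence plus a continuous perturbation preserves $\Gamma$-limits.

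Concretely, take $\nn_\varepsilon \rightharpoonup \nn$ weakly in $H^1$. For the liminf inequality, write
\begin{equation*}
  \liminf \mathcal{E}_\varepsilon(\nn_\varepsilon) \geqslant \liminf \mathcal{G}_0(\nn_\varepsilon) - o(1) \geqslant \mathcal{G}_0(\nn),
\end{equation*}
where the last step uses weak lower semicontinuity of the Dirichlet integral. The other term of $\mathcal{G}_0$ is a constant and causes no difficulty. The same inequality holds if $\nn \notin \mathcal{A}_{\nn_b}$ (or $\nn$ is not sphere-valued), with the convention $+\infty$ outside the admissible class: strong $L^2$ convergence and the compactness of the trace show that this class is weakly closed. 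For the recovery sequence, take the constant sequence $\nn_\varepsilon=\nn$, so that $\mathcal{E}_\varepsilon(\nn)\to\mathcal{G}_0(\nn)$ by the uniform bound.

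For the convergence of minimizers, let $\nse\in\arg\min\mathcal{E}_\varepsilon$. Then $\mathcal{E}_\varepsilon(\nse)\leqslant \mathcal{E}_\varepsilon(\nn_b)\leqslant \mathcal{G}_0(\nn_b)+C_\xi$. The lower bound $\mathcal{E}_\varepsilon \geqslant \mathcal{G}_\varepsilon$, together with \eqref{eq:Geps_G0_relation}, yields
\begin{equation*}
  \alpha\|\nabla\nse\|_{L^2}^2 \leqslant \mathcal{G}_0(\nn_b) + C_\xi + 2\|\nabla\xis\|_{L^2}^2 ,
\end{equation*}
so the family is bounded in $H^1$ because $|\nse|=1$. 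Extract a subsequence with $\nse\rightharpoonup\ns$ weakly in $H^1$ and strongly in $L^2$, with $\ns\in\mathcal{A}_{\nn_b}(\Om)$. The fundamental theorem of $\Gamma$-convergence shows that $\ns$ minimizes $\mathcal{G}_0$ and that $\mathcal{E}_\varepsilon(\nse)\to\mathcal{G}_0(\ns)$.

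The uniform estimate then gives $\mathcal{G}_0(\nse)\to\mathcal{G}_0(\ns)$, that is, $\alpha\|\nabla\nse\|_{L^2}^2\to\alpha\|\nabla\ns\|_{L^2}^2$. Convergence of the norms combined with weak convergence upgrades to strong convergence of the gradients, exactly as in Proposition~\ref{prop:gamma}.

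No step is a serious obstacle. The only point needing care is that the estimate of Lemma~\ref{lem:energygap} is uniform in $\nn$, so it can be applied along arbitrary sequences. Everything else is a transcription of the proof of Proposition~\ref{prop:gamma}.
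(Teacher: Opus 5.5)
Your proposal is correct and follows essentially the same route as the paper. Both arguments rest on the uniform bound $\sup_{\nn}|\mathcal{E}_{\varepsilon}(\nn)-\mathcal{G}_0(\nn)|\leqslant C'\varepsilon$ from Lemma~\ref{lem:energygap} and \eqref{eq:Geps_G0_relation}, then use weak lower semicontinuity of the Dirichlet integral for the liminf, the constant recovery sequence for the limsup, and convergence of the Dirichlet norms to upgrade weak to strong $H^1$ convergence (your explicit $H^1$ bound via $\mathcal{E}_\varepsilon\geqslant\mathcal{G}_\varepsilon$ and your remark on the weak closedness of $\mathcal{A}_{\nn_b}(\Om)$ are small refinements the paper leaves implicit).
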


\begin{proof}
  {{\em Step 1. Uniform energetic bounds.\/}} From the explicit definitions in
  \eqref{eq:Geps} and \eqref{eq:G0fun}, the difference between the local
  approximation and the unperturbed base energy is:
  \begin{equation}
    \mathcal{G}_{\varepsilon} (\nn) -\mathcal{G}_0 (\nn) = \int_{\Om} \left( I
    - \Aeps (\nn) \right) \nabla \xis \cdot \nabla \xis \dx .
  \end{equation}
  Under the standard approximation of the dielectric tensor $\Aeps (\nn) = I +
  \varepsilon \nn \otimes \nn$, this difference evaluates exactly to $-
  \varepsilon \int_{\Om} (\nn \cdot \nabla \xis)^2 \dx$. Because admissible
  fields $\nn \in \mathcal{A}_{\nn_b} (\Om)$ satisfy the unit-length
  constraint $| \nn | = 1$ almost everywhere, we obtain the global linear
  bound $|\mathcal{G}_{\varepsilon} (\nn) -\mathcal{G}_0 (\nn) | \leqslant
  \varepsilon \| \nabla \xis \|_{L^2 (\Om)}^2$.
  
  Combining this with the quantitative bound from Theorem~\ref{thm:thm2},
  namely $\sup_{\nn} |\mathcal{E}_{\varepsilon} (\nn)
  -\mathcal{G}_{\varepsilon} (\nn) | \leqslant C_{\xi} \varepsilon^2$, the
  triangle inequality yields a uniform proximity bound between the true
  physical energy and the limit energy:
  \begin{equation}
    \label{eq:uniform_E_G0} \sup_{\nn \in \mathcal{A}_{\nn_b} (\Om)}
    |\mathcal{E}_{\varepsilon} (\nn) -\mathcal{G}_0 (\nn) | \hspace{0.27em}
    \leqslant \hspace{0.27em} C' \varepsilon,
  \end{equation}
  for a constant $C' > 0$ independent of $\nn$.
  \smallskip
  
  \noindent{{\em Step 2. Weak $\Gamma$-convergence $\mathcal{E}_{\varepsilon}
  \xrightarrow{\Gamma} \mathcal{G}_0$.\/}} We first establish the
  $\Gamma$-liminf inequality. Let $\{ \nn_{\varepsilon} \} \subset
  \mathcal{A}_{\nn_b} (\Om)$ be an arbitrary sequence such that
  $\nn_{\varepsilon} \rightharpoonup \nn$ weakly in $H^1 (\Om)$. Using
  \eqref{eq:uniform_E_G0}, we isolate the limit energy from below:
  \begin{equation}
    \mathcal{E}_{\varepsilon} (\nn_{\varepsilon}) \geqslant \mathcal{G}_0
    (\nn_{\varepsilon}) - C' \varepsilon .
  \end{equation}
  Taking the limit inferior as $\varepsilon \to 0$, and invoking the weak
  lower semicontinuity of the Dirichlet energy on $H^1 (\Om)$, we deduce:
  \begin{equation}
    \liminf_{\varepsilon \to 0} \mathcal{E}_{\varepsilon} (\nn_{\varepsilon})
    \geqslant \liminf_{\varepsilon \to 0} \mathcal{G}_0 (\nn_{\varepsilon})
    \geqslant \mathcal{G}_0 (\nn) .
  \end{equation}
  For the $\Gamma$-limsup inequality, let $\nn \in \mathcal{A}_{\nn_b} (\Om)$.
  Selecting the constant recovery sequence $\widetilde{\nn}_{\varepsilon}
  \equiv \nn$ ensures strong (and thus weak) convergence to $\nn$ in $H^1
  (\Om)$. Applying the uniform bound \eqref{eq:uniform_E_G0} from above:
  \begin{equation}
    \limsup_{\varepsilon \to 0} \mathcal{E}_{\varepsilon}
    (\widetilde{\nn}_{\varepsilon}) \leqslant \limsup_{\varepsilon \to 0}
    \left( \mathcal{G}_0 (\nn) + C' \varepsilon \right) =\mathcal{G}_0 (\nn) .
  \end{equation}
  Together, these inequalities strictly confirm that
  $\mathcal{E}_{\varepsilon} \xrightarrow{\Gamma} \mathcal{G}_0$ in the weak
  $H^1 (\Om)$ topology.
\smallskip
  
  \noindent
  {{\em Step 3. Strong $H^1$ convergence of minimizers.\/}} Let $\nse \in \arg
  \min_{\mathcal{A}_{\nn_b} (\Om)} \mathcal{E}_{\varepsilon}$ be a sequence of
  global minimizers, and fix a smooth comparison state $\widehat{\nn} \in
  \mathcal{A}_{\nn_b} (\Om)$. By the minimality of $\nse$ and the bound
  $\mathcal{G}_0 (\nse) \leqslant \mathcal{E}_{\varepsilon} (\nse) + C'
  \varepsilon$, we get that the sequence $\{\mathcal{G}_0
  (\nse)\}_{\varepsilon > 0}$ is uniformly bounded. Hence, the sequence $\{
  \nse \}_{\varepsilon > 0}$ is uniformly bounded in $H^1 (\Om)$ and,
  therefore, there exists a subsequence (not relabeled) and a state $\ns \in
  \mathcal{A}_{\nn_b} (\Om)$ such that $\nse \rightharpoonup \ns$ weakly in
  $H^1 (\Om)$.
  
  By the Fundamental Theorem of $\Gamma$-convergence, $\ns \in \arg
  \min_{\mathcal{A}_{\nn_b} (\Om)} \mathcal{G}_0$. Furthermore, the minimal
  energies converge:
  \begin{equation}
    \label{eq:energy_conv2} \lim_{\varepsilon \to 0} \mathcal{E}_{\varepsilon}
    (\nse) =\mathcal{G}_0 (\ns) .
  \end{equation}
  In particular, by \eqref{eq:uniform_E_G0},
  \[ \lim_{\varepsilon \to 0} \mathcal{G}_0 (\nse) = \lim_{\varepsilon \to 0}
     \mathcal{E}_{\varepsilon} (\nse) + \lim_{\varepsilon \rightarrow 0}
     \left( \mathcal{G}_0 (\nse) -\mathcal{E}_{\varepsilon} (\nse) \right)
     =\mathcal{G}_0 (\ns) . \]
  We elevate the weak convergence to strong convergence by exploiting the
  exact quadratic structure of the uncoupled base energy. From
  \eqref{eq:G0fun}, the limit functional is precisely $\mathcal{G}_0 (\nn) =
  \alpha \| \nabla \nn \|_{L^2 (\Om)}^2 - \| \nabla \xis \|_{L^2 (\Om)}^2$.
  Substituting this explicit formulation into the energy convergence
  established in Step 3 yields:
  \begin{equation}
    \lim_{\varepsilon \to 0} \left( \alpha \| \nabla \nse \|_{L^2 (\Om)}^2 -\|
    \nabla \xis \|_{L^2 (\Om)}^2 \right) = \alpha \| \nabla \ns \|_{L^2
    (\Om)}^2 - \| \nabla \xis \|_{L^2 (\Om)}^2,
  \end{equation}
  and, therefore,
  \begin{equation}
    \lim_{\varepsilon \to 0} \| \nabla \nse \|_{L^2 (\Om)} = \| \nabla \ns
    \|_{L^2 (\Om)} .
  \end{equation}
  This shows that $\nabla \nse \to \nabla \ns$ strongly in $L^2$, completing
  the proof.
\end{proof}

\subsection{Coercivity transfer}\label{ssec:coerctransfer}At the limit state
$\ns$, the second variation of the Dirichlet energy is strictly
positive-definite (coercive) by Assumption~\ref{ass:coercivity}. Our goal is
to prove that for sufficiently small $\varepsilon > 0$, the perturbed
minimizers $\nse$ inherit this strict stability. To do this, we must transfer
the coercivity bound to the perturbed state, evaluating it exclusively against
variations $\vv$ that preserve the unit-length physical constraint of the
director (i.e., variations satisfying $| \nse + \vv |^2 = 1$ almost
everywhere). The algebraic mechanics of this transfer rely critically on the
uniform $W^{1, 4}$-convergence $\nse \to \ns$, established in
Appendix~\ref{sec:regularity}, which ensures the gradients of the two states
are sufficiently close.

\begin{lemma}[Coercivity transfer]
  \label{lem:coerctransfer}Suppose that Assumption~\ref{ass:coercivity} holds
  with constant $\alpha_0 > 0$, and that
  \begin{equation}
    \label{eq:uniformreg} \| \nse - \ns \|_{W^{1, 4} (\Om)} \to 0 \quad
    \text{as } \varepsilon \to 0, \qquad \ns \in W^{1, \infty} (\Om) .
  \end{equation}
  Then there exist $\tilde{\delta}, \tilde{\alpha} > 0$ and
  $\varepsilon_{\star} > 0$ such that for all $\varepsilon \in (0,
  \varepsilon_{\star})$ and all $\vv \in H^1_0 (\Om, \RR^3)$ satisfying $| \vv
  + \nse |^2 = 1$ a.e. and $\| \vv \|_{H^1_0 (\Om)} \leqslant \tilde{\delta}$,
  we have
  \begin{equation}
    \label{eq:coerctransfer} \mathbb{I}_{\varepsilon} [\vv] := \int_{\Om} |
    \nabla \vv |^2 \dx - \int_{\Om} | \vv |^2 | \nabla \nse |^2 \dx
    \hspace{0.27em} \geqslant \hspace{0.27em} \tilde{\alpha} \hspace{0.17em}
    \| \vv \|_{H^1_0 (\Om)}^2 .
  \end{equation}
\end{lemma}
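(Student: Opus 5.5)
The plan is to reduce \eqref{eq:coerctransfer} to Assumption~\ref{ass:coercivity} in two perturbative steps. First, replace the weight $|\nabla \nse|^2$ by $|\nabla \ns|^2$. Second, replace the nonlinear constraint $|\vv+\nse|^2=1$ by the linear tangency $\vv\cdot\ns=0$, which is the condition under which \eqref{eq:coerc} applies.

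\emph{Step 1: changing the weight.} We have
\[
\mathbb{I}_\varepsilon[\vv]-\mathcal{L}_0[\vv]=\int_\Om |\vv|^2\bigl(|\nabla\ns|^2-|\nabla\nse|^2\bigr)\dx .
\]
By H\"older's inequality with exponents $(2,4,4)$ and the Sobolev embedding $H^1_0\hookrightarrow L^4$ in three dimensions, this is bounded by
\[
C\,\|\vv\|_{H^1_0}^2\,\|\nabla\nse-\nabla\ns\|_{L^4}\bigl(\|\nabla\nse\|_{L^4}+\|\nabla\ns\|_{L^4}\bigr).
\]
By \eqref{eq:uniformreg} this is $o(1)\|\vv\|_{H^1_0}^2$ as $\varepsilon\to0$. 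It therefore suffices to show $\mathcal{L}_0[\vv]\geqslant 2\tilde\alpha\|\vv\|_{H^1_0}^2$ for all admissible $\vv$, provided $\varepsilon$ and $\tilde\delta$ are small.

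\emph{Step 2: splitting off the normal part.} Write $\vv=\bm w+\varphi\,\ns$ with $\varphi:=\vv\cdot\ns$ and $\bm w\cdot\ns=0$. Since $\ns\in W^{1,\infty}$, both $\bm w$ and $\varphi\ns$ lie in $H^1_0$, with $\|\bm w\|_{H^1_0}\leqslant C\|\vv\|_{H^1_0}$. Expanding the quadratic form gives
\[
\mathcal{L}_0[\vv]\geqslant \alpha_0\|\bm w\|_{H^1_0}^2 - C\|\bm w\|_{H^1_0}\|\varphi\ns\|_{H^1}-C\|\varphi\ns\|_{H^1}^2 ,
\]
and $\|\bm w\|_{H^1_0}\geqslant\|\vv\|_{H^1_0}-\|\varphi\ns\|_{H^1}$. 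The whole lemma therefore reduces to a smallness estimate
\[
\|\varphi\ns\|_{H^1}\leqslant \eta\,\|\vv\|_{H^1_0},\qquad \eta=\eta(\varepsilon,\tilde\delta)\to0 .
\]
To obtain it, use the constraint, which is equivalent to $2\vv\cdot\nse+|\vv|^2=0$. This gives the decomposition
\[
\varphi=\vv\cdot(\ns-\nse)-\tfrac12|\vv|^2 .
\]
The linear piece is harmless. Since $W^{1,4}\hookrightarrow L^\infty$ in three dimensions (Morrey), we have $\|\ns-\nse\|_{L^\infty}\to0$. Hence
\[
\|\nabla[\vv\cdot(\ns-\nse)]\|_{L^2}\leqslant \|\ns-\nse\|_{L^\infty}\|\nabla\vv\|_{L^2}+\|\vv\|_{L^4}\|\nabla(\ns-\nse)\|_{L^4}=o(1)\|\vv\|_{H^1_0}.
\]

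\emph{Main obstacle: the quadratic piece.} The hard part is $\tfrac12|\vv|^2$, whose gradient is $\vv\cdot\nabla\vv$. Only $|\vv|\leqslant2$ holds pointwise, and $H^1$-smallness of $\vv$ gives no $L^\infty$-smallness in three dimensions. So $\|\nabla|\vv|^2\|_{L^2}$ is not obviously $o(\|\vv\|_{H^1})$. I would first try to avoid differentiating $|\vv|^2$ at all. The idea is to exploit the structure of $\mathcal{L}_0$ directly on the normal component: compute $\mathcal{L}_0[\varphi\ns]$ and the cross term $B(\bm w,\varphi\ns)$ explicitly, integrating by parts against the harmonic map equation $-\Delta\ns=|\nabla\ns|^2\ns$. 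This should cancel the terms containing $\nabla\varphi$ and leave only expressions in which $|\varphi|\leqslant |\vv|\,|\ns-\nse|+\tfrac12|\vv|^2$ appears undifferentiated. Those terms are controlled by the $L^4$ norm of $\vv$ together with $\|\vv\|_{H^1}\leqslant\tilde\delta$, for instance via $\|\vv\|_{L^4}^4\leqslant C\|\vv\|_{L^2}\|\vv\|_{H^1}^3\leqslant C\tilde\delta^2\|\vv\|_{H^1}^2$. If this cancellation does not fully close, the fallback is to use that $\|\vv\|_{H^1}\leqslant\tilde\delta$ forces $|\vv|$ to be small outside a set of small measure. One would then treat that exceptional set separately, for example by a truncation of $|\vv|$, using the pointwise bound $|\vv\cdot\nabla\vv|\leqslant2|\nabla\vv|$ on it.
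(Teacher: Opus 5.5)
Your Step~1 (replacing the weight $|\nabla\nse|^2$ by $|\nabla\ns|^2$) is fine. The gap is in Step~2. There you reduce the lemma to the bound $\|\varphi\ns\|_{H^1}\leqslant\eta\,\|\vv\|_{H^1_0}$ with $\eta(\varepsilon,\tilde\delta)\to0$, and this bound is false. So neither your truncation fallback nor any other refinement can supply it.

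To see this, let $\nn$ be $\nse$ rotated through the angle $\theta(x)=\frac{\pi}{2}\chi(|x-x_0|/r)$ about an axis orthogonal to $\nse(x_0)$. Here $\chi$ is a fixed cutoff and $B_r(x_0)\Subset\Om$. Then $\vv=\nn-\nse$ is admissible and $\|\vv\|_{H^1_0}\sim\sqrt r\to0$. However, the gradient energy of $\vv$ sits where $|\vv|$ is of order one. Indeed, $\varphi\approx-\frac12|\vv|^2$ is of size one on part of $B_r$ with gradient of size $1/r$, so $\|\nabla\varphi\|_{L^2}\gtrsim\sqrt r$ as well. The normal component therefore carries a fixed fraction of the $H^1$ norm, however small $\tilde\delta$ and $\varepsilon$ are.

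The failure originates in your lower bound for $\mathcal{L}_0[\vv]$. You bounded $\mathcal{L}_0[\varphi\ns]$ below by $-C\|\varphi\ns\|_{H^1}^2$ and the cross term by $C\|\bm w\|_{H^1_0}\|\varphi\ns\|_{H^1}$, which discards exactly the structure that makes the lemma true. Your first fallback points in the right direction, but its predicted outcome is not what happens: the terms containing $\nabla\varphi$ do not cancel.

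What is missing is to keep the normal Dirichlet energy with its sign.
\begin{itemize}
\item Since $\ns\cdot\partial_i\ns=0$, one has exactly $\mathcal{L}_0[\varphi\ns]=\|\nabla\varphi\|_{L^2}^2\geqslant0$; the harmonic-map equation is not needed.
\item The weight part of the cross term vanishes because $\bm w\cdot\ns=0$.
\item Differentiating $\bm w\cdot\ns=0$ gives $\partial_i\bm w\cdot\ns=-\bm w\cdot\partial_i\ns$, hence (summing over $i$)
\[ \int_\Om\nabla\bm w:\nabla(\varphi\ns)\dx=\int_\Om\bigl(-\partial_i\varphi\,(\bm w\cdot\partial_i\ns)+\varphi\,(\partial_i\bm w\cdot\partial_i\ns)\bigr)\dx . \]
So $\nabla\varphi$ only ever appears multiplied by $|\bm w|\leqslant|\vv|$.
\item Your formula for $\varphi$ then makes the dangerous piece cubic:
\[ |\vv|\,|\nabla\varphi|\leqslant|\vv|^2|\nabla\vv|+|\vv||\nabla\vv||\nse-\ns|+|\vv|^2|\nabla(\nse-\ns)|, \]
with $\int_\Om|\vv|^2|\nabla\vv|\dx\leqslant\|\vv\|_{L^4}^2\|\nabla\vv\|_{L^2}\leqslant C\tilde\delta\|\vv\|_{H^1_0}^2$.
\item The undifferentiated term is controlled by $\|\varphi\|_{L^2}\leqslant C(\tilde\delta+\|\nse-\ns\|_{W^{1,4}})\|\vv\|_{H^1_0}$.
\end{itemize}

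To close the estimate, assume $\alpha_0\leqslant1$ and use the expansion
\[ \|\nabla\vv\|_{L^2}^2=\|\nabla\bm w\|_{L^2}^2+\|\nabla\varphi\|_{L^2}^2+\int_\Om\varphi^2|\nabla\ns|^2\dx+2\int_\Om\nabla\bm w:\nabla(\varphi\ns)\dx . \]
Then $\alpha_0\|\bm w\|_{H^1_0}^2+\|\nabla\varphi\|_{L^2}^2$ dominates $\alpha_0\|\vv\|_{H^1_0}^2$ up to the same small errors.

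This is exactly the paper's argument. It keeps $(1-\frac{\alpha_0}{2})\|\vpar\|_{H^1_0}^2\geqslant0$ rather than estimating it. It also uses that the term $(P\partial_i\vv\cdot\ns)\,\partial_i\phi$ in $\nabla\vperp:\nabla\vpar$ vanishes identically.
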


\begin{proof}
  Throughout the proof, $C > 0$ denotes a generic constant depending only on
  $\Om$ and $\| \ns \|_{W^{1, \infty} (\Om)}$, which may change from line to
  line. Without loss of generality, we may assume $\alpha_0 \leqslant 1$.
  
  We decompose the variation $\vv = \vpar + \vperp$ pointwise with respect to
  the limit field $\ns$:
  \begin{equation}
    \vpar := (\vv \cdot \ns) \ns, \qquad \vperp := P \vv, \quad \text{where }
    P := I - \ns \otimes \ns .
  \end{equation}
  Note that $\vpar \cdot \vperp = 0$ pointwise, which implies $| \vv |^2 = |
  \vpar |^2 + | \vperp |^2$.
  
   \smallskip
  
\noindent\emph{Step 1. Coercivity on the perpendicular
  part.} By construction, $\vperp \cdot \ns = 0$.
  Assumption~\ref{ass:coercivity} therefore guarantees $\mathcal{L}_0 [\vperp]
  \geqslant \alpha_0 \| \vperp \|_{H^1_0 (\Om)}^2$. We calculate:
  \begin{align}
    \mathbb{I}_{\varepsilon} [\vperp] & =\mathcal{L}_0 [\vperp] + \int_{\Om} |
    \vperp |^2  (| \nabla \ns |^2 - | \nabla \nse |^2) \dx \nonumber\\
    & \geqslant \alpha_0 \| \vperp \|_{H^1_0 (\Om)}^2 - \int_{\Om} | \vperp
    |^2  | \nabla \ns - \nabla \nse |  | \nabla \ns + \nabla \nse | \dx . 
    \label{eq:perpcoerc_inter}
  \end{align}
  Applying H\"older's inequality with exponents $(4, 4, 2)$ to the integral, we
  bound it by $\| \vperp \|_{L^4}^2  \| \nabla \ns - \nabla \nse \|_{L^4}  \|
  \nabla \ns + \nabla \nse \|_{L^4}$. Since $\ns \in W^{1, \infty} (\Om)$ and
  $\nse \to \ns$ in $W^{1, 4} (\Om)$, the sum of the gradients is uniformly
  bounded in $L^4 (\Om)$. By the Sobolev embedding $H^1_0 (\Om)
  \hookrightarrow L^4 (\Om)$ in three dimensions, $\| \vperp \|_{L^4}^2
  \leqslant C \| \vperp \|_{H^1_0 (\Om)}^2$. Consequently,
  \begin{equation}
    \label{eq:perpcoerc} \mathbb{I}_{\varepsilon} [\vperp] \hspace{0.27em}
    \geqslant \hspace{0.27em} \left( \alpha_0 - C\| \nse - \ns \|_{W^{1, 4}
    (\Om)} \right) \| \vperp \|_{H^1_0 (\Om)}^2 \hspace{0.27em} \geqslant
    \hspace{0.27em} \frac{\alpha_0}{2} \| \vperp \|_{H^1_0 (\Om)}^2,
  \end{equation}
  provided $\varepsilon_{\star}$ is chosen small enough.
  
   \smallskip
  
\noindent\emph{Step 2. Algebraic identity for $\vpar$.}
  Expanding the constraint $| \vv + \nse |^2 = 1$ and using $| \nse |^2 = 1$,
  we obtain $| \vv |^2 + 2 \nse \cdot \vv = 0$. Rewriting this as $| \vv |^2 +
  2 \vv \cdot \ns + 2 \vv \cdot (\nse - \ns) = 0$ allows us to isolate the
  projection:
  \begin{equation}
    \label{eq:vparexpansion} \vpar \hspace{0.27em} = \hspace{0.27em} \phi
    \hspace{0.17em} \ns, \qquad \text{where } \phi := \left( \vv \cdot \ns
    \right) = - \frac{1}{2} | \vv |^2 - \vv \cdot (\nse - \ns) .
  \end{equation}
  Taking absolute values, we have the pointwise bound $| \phi | \leqslant
  \frac{1}{2} | \vv |^2 + | \vv |  | \nse - \ns |$. Squaring this, using the
  elementary inequality $(1 / 2) a^2 + a b \leqslant (1 / 2) a^4 + 2 a^2 b^2$,
  and integrating, yields an $L^2$-estimate for
  \begin{align}
    \| \vpar \|_{L^2 (\Om)}^2 & \leqslant C \int_{\Om} \left( \left| \vv
    \right|^4 + | \nse - \ns |^2  \left| \vv \right|^2 \right) \dx \nonumber\\
    & \leqslant C \left( \| \vv \|_{L^4 (\Om)}^2 +\| \nse - \ns
    \|_{L^{\infty} (\Om)}^2 \right) \| \vv \|_{L^4 (\Om)}^2 \nonumber\\
    & \leqslant C \left( \tilde{\delta}^2 +\| \nse - \ns \|_{W^{1, 4}
    (\Om)}^2 \right) \| \vv \|_{H^1_0 (\Om)}^2,  \label{eq:vparL2}
  \end{align}
  where we invoked the continuous embeddings $H^1_0 (\Om) \hookrightarrow L^4
  (\Om)$ and $W^{1, 4} (\Om) \hookrightarrow L^{\infty} (\Om)$ in three
  dimensions, and denoted by $\tilde{\delta}^2$ the quantity appearing in the
  statement---whose value will be specified later---which bounds $\| \vv
  \|_{H^1_0 (\Om)}^2$. 
  
   \smallskip
  
\noindent\emph{Step 3. Recombining the functional
  $\mathbb{I}_{\varepsilon} [\vv]$.} Expanding the full functional
  \eqref{eq:coerctransfer} using $\vv = \vperp + \vpar$, we obtain:
  \begin{equation}
    \mathbb{I}_{\varepsilon} [\vv] =\mathbb{I}_{\varepsilon} [\vperp] + \|
    \vpar \|_{H^1_0 (\Om)}^2 - \int_{\Om} | \vpar |^2 | \nabla \nse |^2 \dx +
    2 \int_{\Om} \nabla \vperp : \nabla \vpar \dx .
  \end{equation}
  Applying the coercivity bound \eqref{eq:perpcoerc} for $\vperp$, we get
  \begin{equation}
    \mathbb{I}_{\varepsilon} [\vv] \geqslant \frac{\alpha_0}{2} \| \vperp
    \|_{H^1_0}^2 + \| \vpar \|_{H^1_0}^2 - \int_{\Om} | \vpar |^2 | \nabla
    \nse |^2 \dx + 2 \int_{\Om} \nabla \vperp : \nabla \vpar \dx .
  \end{equation}
  To reconstruct the full norm $\| \vv \|_{H^1_0}^2 = \| \vperp \|_{H^1_0}^2 +
  \| \vpar \|_{H^1_0}^2 + 2 \int_{\Om} \nabla \vperp : \nabla \vpar \dx$, we
  substitute $\| \vperp \|_{H^1_0}^2 = \| \vv \|_{H^1_0}^2 - \| \vpar
  \|_{H^1_0}^2 - 2 \int_{\Om} \nabla \vperp : \nabla \vpar \dx$ into the
  inequality. This algebraic reorganization yields:
  \begin{equation}
    \mathbb{I}_{\varepsilon} [\vv] \geqslant \frac{\alpha_0}{2} \| \vv
    \|_{H^1_0}^2 + \left( 1 - \frac{\alpha_0}{2} \right) \| \vpar \|_{H^1_0}^2
    + (2 - \alpha_0)  \int_{\Om} \nabla \vperp : \nabla \vpar \dx - \int_{\Om}
    | \vpar |^2 | \nabla \nse |^2 \dx .
  \end{equation}
  Since we assumed $\alpha_0 \leqslant 1$, the coefficient $(1 -
  \frac{\alpha_0}{2})$ is strictly positive. Thus, we may safely drop the
  positive term $(1 - \frac{\alpha_0}{2}) \| \vpar \|_{H^1_0}^2 \geqslant 0$
  to get
  \begin{equation}
    \label{eq:Iexpand} \mathbb{I}_{\varepsilon} [\vv] \geqslant
    \frac{\alpha_0}{2} \| \vv \|_{H^1_0}^2 + (2 - \alpha_0)  \int_{\Om} \nabla
    \vperp : \nabla \vpar \dx - \int_{\Om} | \vpar |^2 | \nabla \nse |^2 \dx .
  \end{equation}
  
  It remains to show that the last two error terms can be absorbed by
  $\frac{\alpha_0}{4} \| \vv \|_{H^1_0}^2$.
  
   \smallskip
  
\noindent\emph{Step 4. Bounding the error terms.} We must control the
  last two terms in \eqref{eq:Iexpand} by a small fraction of $\| \vv
  \|_{H^1_0 (\Om)}^2$. We handle them one by one.
  
  \noindent\textbf{(a) The parallel term.} We rewrite the gradient squared
  by adding and subtracting the limit gradient: $| \nabla \nse |^2 = | \nabla
  \ns |^2 + (| \nabla \nse |^2 - | \nabla \ns |^2)$. Substituting this into
  the integral splits it into two pieces:
  \[ \int_{\Om} | \vpar |^2 | \nabla \nse |^2 \dx \hspace{0.27em} \leqslant
     \hspace{0.27em} \| \nabla \ns \|_{L^{\infty}}^2 \| \vpar \|_{L^2}^2 +
     \int_{\Om} | \vpar |^2  \left| | \nabla \nse |^2 - | \nabla \ns |^2
     \right| \dx . \]
  For the second piece, we factor the difference of squares as $(\nabla \nse -
  \nabla \ns) : (\nabla \nse + \nabla \ns)$, bound $| \vpar | \leqslant | \vv
  |$, and apply H\"older's inequality with exponents $(4, 4, 2)$ exactly as in
  Step 1. Using our $L^2$-bound for $\vpar$ from \eqref{eq:vparL2}, we obtain:
  \begin{align}
    \int_{\Om} | \vpar |^2 | \nabla \nse |^2 \dx & \hspace{0.27em} \leqslant
    \hspace{0.27em} \| \nabla \ns \|_{L^{\infty}}^2  \| \vpar \|_{L^2}^2 + C\|
    \nse - \ns \|_{W^{1, 4}} \| \vv \|_{L^4}^2 \nonumber\\
    & \hspace{0.27em} \leqslant \hspace{0.27em} C \left( \tilde{\delta}^2 +\|
    \nse - \ns \|_{W^{1, 4}}^2 +\| \nse - \ns \|_{W^{1, 4}} \right) \| \vv
    \|_{H^1_0}^2 .  \label{eq:vparmass}
  \end{align}
  \noindent\textbf{(b) The cross-gradient term.} We must bound $\int_{\Om} |
  \nabla \vperp : \nabla \vpar | \dx$. First, we compute the pointwise
  product. Differentiating $\vpar = \phi \ns$ and $\vperp = P \vv$ via the
  product rule yields $\partial_i \vpar = (\partial_i \phi) \ns + \phi
  \partial_i \ns$ and $\partial_i \vperp = P \partial_i \vv + (\partial_i P)
  \vv$. Therefore, taking the inner product of these two expressions gives
  three terms:
  \begin{equation}
    \partial_i \vperp \cdot \partial_i \vpar \hspace{0.27em} = \hspace{0.27em}
    \underbrace{\left( P \partial_i \vv \cdot \ns \right) \partial_i
    \phi}_{\text{Term 1}} \hspace{0.27em} + \hspace{0.27em} \underbrace{\phi
    \left( P \partial_i \vv \cdot \partial_i \ns \right)}_{\text{Term 2}}
    \hspace{0.27em} + \hspace{0.27em} \underbrace{\left( (\partial_i P) \vv
    \cdot \partial_i \vpar \right)}_{\text{Term 3}} .
  \end{equation}
  Because $P$ is the projection onto the orthogonal complement of $\ns$, we
  have $P\bm{w} \cdot \ns = 0$ for any vector
  $\bm{w}$. Thus, the geometric cancellation $P
  \partial_i \vv \cdot \ns = 0$ occurs, and Term 1 identically vanishes.
  
  To bound the remaining terms, we use the properties $| \partial_i P|
  \leqslant 2 | \nabla \ns | \leqslant C$ and $| \nabla \vpar | \leqslant |
  \nabla \phi | + C | \phi |$ for some $C$ depending only on $\ns$. This
  simplifies the pointwise absolute bound to:
  \begin{equation}
    | \nabla \vperp : \nabla \vpar | \hspace{0.27em} \leqslant \hspace{0.27em}
    C \left[ | \phi || \nabla \vv | + | \vv | (| \nabla \phi | + | \phi |)
    \right] .
  \end{equation}
  Next, we need to estimate the gradient of the scalar $\phi$. Recalling from
  \eqref{eq:vparexpansion} that $\phi = - \frac{1}{2} | \vv |^2 - \vv \cdot
  (\nse - \ns)$, the chain rule gives:
  \begin{equation}
    | \nabla \phi | \leqslant \left| \vv \right|  \left| \nabla \vv \right| +
    \left| \nabla \vv \right|  \left| \nse - \ns \right| + \left| \vv \right| 
    \left| \nabla (\nse - \ns) \right|
  \end{equation}
  Substituting $\phi$ and $\nabla \phi$ into the bound above yields a
  polynomial expression consisting of two types of terms: \emph{lower-order
  terms} (purely powers of $\vv$ and $\nabla \vv$ that are bounded by the
  energy norm via Sobolev embeddings) and \emph{perturbation-controlled
  terms} (products involving the difference $(\nse - \ns)$, which vanish as
  $\varepsilon \to 0$ due to the uniform $W^{1, 4}$-convergence established in
  \eqref{eq:uniformreg}).
  \begin{equation}
    | \nabla \vperp : \nabla \vpar | \leqslant C \left( \left| \vv \right|^2 
    \left| \nabla \vv \right| + \left| \vv \right|  \left| \nabla \vv \right| 
    \left| \nse - \ns \right| + \left| \vv \right|^3 + \left| \vv \right|^2
    \left| \nse - \ns \right| + \left| \vv \right|^2 \left| \nabla \left( \nse
    - \ns \right) \right| \right) .
  \end{equation}
  We integrate this inequality over $\Om$. We group the terms into two
  categories and apply standard 3D Sobolev embeddings ($H^1_0 \hookrightarrow
  L^3, L^4, L^6$):
  \begin{itemize}
    \item \emph{Cubic terms in $\vv$:} We bound $\int | \vv |^3 \leqslant C
    \| \vv \|_{H^1_0}^3$ and $\int | \vv |^2 | \nabla \vv | \leqslant \| \vv
    \|_{L^4}^2 \| \nabla \vv \|_{L^2} \leqslant C \| \vv \|_{H^1_0}^3$. Since
    $\| \vv \|_{H^1_0} \leqslant \tilde{\delta}$, we extract a factor of
    $\tilde{\delta}$ to control these by $C \tilde{\delta} \| \vv
    \|_{H^1_0}^2$.
    \smallskip
    
    \item \emph{Perturbation terms:} The terms involving the difference
    $(\nse - \ns)$ and its gradient are bounded using H\"older's inequality
    with appropriate exponents, yielding upper bounds of the form $C \| \nse -
    \ns \|_{W^{1, 4}} \| \vv \|_{H^1_0}^2$.
  \end{itemize}
  Summing these integrated bounds, we arrive at the final estimate for the
  cross-gradient:
  \begin{equation}
    \label{eq:crossgrad} \int_{\Om} | \nabla \vperp : \nabla \vpar | \dx
    \hspace{0.27em} \leqslant \hspace{0.27em} C \left( \tilde{\delta} +\| \nse
    - \ns \|_{W^{1, 4} (\Om)} \right) \| \vv \|_{H^1_0 (\Om)}^2 .
  \end{equation}
  \smallskip
  
\noindent 
  \emph{Conclusion.} Inserting \eqref{eq:vparmass} and
  \eqref{eq:crossgrad} into \eqref{eq:Iexpand}, we obtain:
  \begin{equation}
    \mathbb{I}_{\varepsilon} [\vv] \hspace{0.27em} \geqslant \hspace{0.27em}
    \left[ \frac{\alpha_0}{2} - C \left( \tilde{\delta} + \tilde{\delta}^2 +\|
    \nse - \ns \|_{W^{1, 4}} +\| \nse - \ns \|_{W^{1, 4}}^2 \right) \right] \|
    \vv \|_{H^1_0 (\Om)}^2 .
  \end{equation}
  By first selecting $\tilde{\delta}$ small enough, and then restricting
  $\varepsilon \in (0, \varepsilon_{\star})$ such that $\| \nse - \ns
  \|_{W^{1, 4}} \to 0$ makes the remaining terms sufficiently small, the
  bracketed term is strictly bounded from below by $\frac{\alpha_0}{4}$. This
  establishes \eqref{eq:coerctransfer} with $\tilde{\alpha} :=
  \frac{\alpha_0}{4}$.
\end{proof}

\subsection{Conclusion: proof of Theorem~\ref{thm:thm2}}\label{ssec:proofthm2}

We are now in position to prove Theorem~\ref{thm:thm2}.

\begin{proof}[Proof of Theorem~\ref{thm:thm2}]
  Let $\{ \nse \}_{\varepsilon > 0}$ be a family of
  global minimizers of $\mathcal{G}_{\varepsilon}$ in $\mathcal{A}_{\nn_b}
  (\Om)$. By Proposition~\ref{prop:gamma}, there exist a subsequence (not
  relabeled) and a global minimizer $\ns$ of $\mathcal{G}_0$ with $\nse \to
  \ns$ strongly in $H^1 (\Om, \RR^3)$. This establishes part (i) of
  Theorem~\ref{thm:thm2}.
  
  For part (ii), assume that this specific limit $\ns$ satisfies
  Assumption~\ref{ass:coercivity}. Theorem~\ref{thm:uniformW22est} in
  Appendix~\ref{sec:regularity} strengthens the convergence to $\| \nse - \ns
  \|_{W^{1, 4} (\Om)} \to 0$ and gives $\ns \in W^{1, \infty} (\Om)$;
  hypothesis \eqref{eq:uniformreg} of Lemma~\ref{lem:coerctransfer} is
  therefore satisfied, and the coercivity \eqref{eq:coerctransfer} applies for
  all $\varepsilon$ sufficiently small.
  
  Let $(\bm{m}^{\varepsilon}, \gv_{\ast}^{\varepsilon})
  \in \arg \min \mathcal{F}_{\varepsilon}$ be such that
  $\bm{m}^{\varepsilon} \rightarrow \ns$ in $H^1 (\Om)$,
  and set $\vv_{\varepsilon} := \bm{m}^{\varepsilon} -
  \nse$. Since $\bm{m}^{\varepsilon} = \nn_b = \nse$ on
  $\partial \Om$ in the trace sense, we have $\vv_{\varepsilon} \in H^1_0
  (\Om, \RR^3)$ and $| \nse + \vv_{\varepsilon} | =
  |\bm{m}^{\varepsilon} | = 1$. Moreover, $\|
  \vv_{\varepsilon} \|_{H^1_0} \to 0$ by the triangle inequality combined with
  $\nse \to \ns$ and $\bm{m}^{\varepsilon} \to \ns$.
  
  Choose $\varepsilon_{\bullet} \in (0, \varepsilon_{\star})$ so small that
  $\| \vv_{\varepsilon} \|_{H^1_0} \leqslant \tilde{\delta}$ for $\varepsilon
  \in (0, \varepsilon_{\bullet})$. By Lemma~\ref{lem:coerctransfer},
  \begin{equation}
    \mathbb{I}_{\varepsilon} [\vv_{\varepsilon}] \geqslant \tilde{\alpha}
    \hspace{0.17em} \| \vv_{\varepsilon} \|_{H^1_0}^2 .
  \end{equation}
  We now relate $\mathbb{I}_{\varepsilon} [\vv_{\varepsilon}]$ to the energy
  gap. Since $\nse$ is a critical point of $\mathcal{G}_{\varepsilon}$ on $H^1
  (\Om, \jM)$, it solves the Euler--Lagrange system
  \begin{equation}
    \label{eq:EL} - \alpha \Delta \nse - \varepsilon (\nabla \xis \cdot \nse)
    \nabla \xis = \lambda (\nse) \hspace{0.17em} \nse, \qquad \lambda (\nse) =
    \alpha | \nabla \nse |^2 - \varepsilon (\nabla \xis \cdot \nse)^2,
  \end{equation}
  in the weak sense, with $\nse = \nn_b$ on $\partial \Om$. Testing
  \eqref{eq:EL} with $2 \vv_{\varepsilon} \in H^1_0 (\Om, \RR^3)$, integrating
  by parts, and using the constraint $2 \vv_{\varepsilon} \cdot \nse + |
  \vv_{\varepsilon} |^2 = 0$ to evaluate the Lagrange multiplier contribution
  $\int \lambda (\nse)  \hspace{0.17em} (\nse \cdot 2 \vv_{\varepsilon}) \dx =
  - \int \lambda (\nse) | \vv_{\varepsilon} |^2 \dx$,
  \begin{align}
    2 \alpha \hspace{-0.17em} \int_{\Om} \nabla \nse : \nabla
    \vv_{\varepsilon} \dx & = 2 \varepsilon \hspace{-0.17em} \int_{\Om}
    (\nabla \xis \cdot \nse)  (\nabla \xis \cdot \vv_{\varepsilon}) \dx
    \nonumber\\
    & \quad - \alpha \hspace{-0.17em} \int_{\Om} | \nabla \nse |^2 |
    \vv_{\varepsilon} |^2 \dx + \varepsilon \hspace{-0.17em} \int_{\Om}
    (\nabla \xis \cdot \nse)^2 | \vv_{\varepsilon} |^2 \dx . 
    \label{eq:ELtested}
  \end{align}
  Substituting into the expansion of $\mathcal{G}_{\varepsilon}  (\nse +
  \vv_{\varepsilon}) -\mathcal{G}_{\varepsilon} (\nse)$,
  \begin{align}
    \mathcal{G}_{\varepsilon}  (\nse + \vv_{\varepsilon})
    -\mathcal{G}_{\varepsilon} (\nse) & = \alpha \hspace{-0.17em} \int_{\Om} |
    \nabla \vv_{\varepsilon} |^2 \dx + 2 \alpha \hspace{-0.17em} \int_{\Om}
    \nabla \nse : \nabla \vv_{\varepsilon} \dx \nonumber\\
    & \quad - \varepsilon \hspace{-0.17em} \int_{\Om} (\nabla \xis \cdot
    \vv_{\varepsilon})^2 \dx - 2 \varepsilon \hspace{-0.17em} \int_{\Om}
    (\nabla \xis \cdot \nse)  (\nabla \xis \cdot \vv_{\varepsilon}) \dx
    \nonumber\\
    & = \alpha \hspace{0.17em} \mathbb{I}_{\varepsilon} [\vv_{\varepsilon}] -
    \varepsilon \hspace{-0.17em} \int_{\Om} (\nabla \xis \cdot
    \vv_{\varepsilon})^2 \dx + \varepsilon \hspace{-0.17em} \int_{\Om} (\nabla
    \xis \cdot \nse)^2 | \vv_{\varepsilon} |^2 \dx .  \label{eq:Gexpansion}
  \end{align}
  The last two terms in \eqref{eq:Gexpansion} are bounded in absolute value by
  $C_{\xi} \varepsilon \hspace{0.17em} \| \vv_{\varepsilon} \|_{L^4}^2
  \leqslant C_{\xi} \varepsilon \| \vv_{\varepsilon} \|_{H^1_0}^2$ (using $|
  \nse | = 1$ and the Sobolev embedding $H^1 \hookrightarrow L^4$).
  
  Combining \eqref{eq:Gexpansion} with the coercivity \eqref{eq:coerctransfer}
  yields
  \begin{equation}
    \label{eq:Glower} \mathcal{G}_{\varepsilon}  (\nse + \vv_{\varepsilon})
    -\mathcal{G}_{\varepsilon} (\nse) \geqslant (\alpha \tilde{\alpha} -
    C_{\xi} \varepsilon) \hspace{0.17em} \| \vv_{\varepsilon} \|_{H^1_0}^2
    \geqslant \tfrac{1}{2} \alpha \tilde{\alpha} \hspace{0.17em} \|
    \vv_{\varepsilon} \|_{H^1_0}^2,
  \end{equation}
  for $\varepsilon \in (0, \varepsilon_{\bullet})$ with
  $\varepsilon_{\bullet}$ further reduced if necessary.
  
  On the other hand, since $\mathcal{F}_{\varepsilon}
  =\mathcal{G}_{\varepsilon} +\mathcal{A}_{\varepsilon}$ and the dual
  functional is strictly non-negative ($\min \mathcal{A}_{\varepsilon}
  \geqslant 0$), we have the exact lower bound $\mathcal{G}_{\varepsilon}
  (\bm{m}^{\varepsilon}) \leqslant \min_{\gv \in \Vspace}
  \mathcal{F}_{\varepsilon} (\bm{m}^{\varepsilon}, \gv)$.
  Furthermore, because $(\bm{m}^{\varepsilon},
  \gv_{\ast}^{\varepsilon})$ globally minimizes $\mathcal{F}_{\varepsilon}$,
  and by applying Lemma~\ref{lem:energygap} to the competitor $\nse$, we
  obtain the tight upper bound:
  \begin{align}
    \label{eq:upperGseps} \mathcal{G}_{\varepsilon}
    (\bm{m}^{\varepsilon}) -\mathcal{G}_{\varepsilon}
    (\nse) & \hspace{0.27em} \leqslant \hspace{0.27em} \min_{\gv \in \Vspace}
    \mathcal{F}_{\varepsilon} (\bm{m}^{\varepsilon}, \gv)
    -\mathcal{G}_{\varepsilon} (\nse) \nonumber \\
    & \hspace{0.27em} \leqslant
    \hspace{0.27em} \min_{\gv \in \Vspace} \mathcal{F}_{\varepsilon} (\nse,
    \gv) -\mathcal{G}_{\varepsilon} (\nse) \hspace{0.27em} \leqslant
    \hspace{0.27em} C_{\xi} \varepsilon^2 .
  \end{align}
  Combining the lower bound \eqref{eq:Glower} with $\vv_{\varepsilon}
  =\bm{m}^{\varepsilon} - \nse$ and the upper bound
  \eqref{eq:upperGseps} yields
  \begin{equation}
    \tfrac{1}{2} \alpha \tilde{\alpha}  \hspace{0.17em}
    \|\bm{m}^{\varepsilon} - \nse \|_{H^1_0}^2
    \hspace{0.27em} \leqslant \hspace{0.27em} C_{\xi} \varepsilon^2,
  \end{equation}
  which immediately gives the desired convergence rate \eqref{eq:rate} with
  the optimized constant $K_{\xi} := \sqrt{2 C_{\xi} / (\alpha
  \tilde{\alpha})}$.
\end{proof}

\appendix

\section{ Non-dimensionalization}\label{sec:nondim}
We record the connection between the dimensional model of
Section~\ref{ssec:varprob} and the dimensionless functional $\mathcal{E}$ of
\eqref{eq:mainenergy}. The dimensional free energy is
\begin{equation}
  \label{eq:Fdim} \tilde{\mathcal{E}}(\widetilde{\nn},\widetilde{E}) = \frac{1}{2} 
  \int_{\tilde{\Omega}} \left[ K| \tilde{\nabla} \widetilde{\nn} |^2 -
  \varepsilon_0  \left( \varepsilon_{\perp} I + \varepsilon_a  \hspace{0.17em}
  \widetilde{\nn} \otimes \widetilde{\nn} \right)  \widetilde{\EE}
  \cdot \widetilde{\EE}\right] \hspace{0.17em} \mathrm{d} \tilde{x},
\end{equation}
with the  director $\widetilde{\nn} : \tilde{\Omega} \to \jM$ and the electric field $\widetilde{\EE}=-\nabla \tilde{U}$ with the corresponding electrostatic potential $\tilde{U} : \tilde{\Omega} \to \RR$. The potential is
coupled to the director as the unique solution to the dimensional Poisson
equation:
\begin{equation}
  \label{eq:Poisson_dim} \widetilde{\mathrm{div}}
  (\bm{\varepsilon}(\widetilde{\nn}) \tilde{\nabla}
  \tilde{U}) = 0 \quad \text{in } \tilde{\Omega}, \quad \tilde{U} =
  \tilde{U}_b  \quad \text{on } \partial \tilde{\Omega},
\end{equation}
where $\bm{\varepsilon} (\widetilde{\nn}) :=
\varepsilon_0  (\varepsilon_{\perp} I + \varepsilon_a  \widetilde{\nn} \otimes
\widetilde{\nn})$ is the dielectric tensor, and $\tilde{\nabla}$,
$\widetilde{\mathrm{div}}$ denote differential operators with respect to the
dimensional spatial variable $\tilde{x}$.

The physical units are $[\tilde{\mathcal{E}}_{\dim}] = \mathrm{J}$, $[K] =
\mathrm{J} \hspace{0.17em} \mathrm{m}^{- 1}$, $[\varepsilon_0] = \mathrm{F}
\hspace{0.17em} \mathrm{m}^{- 1}$, with $\varepsilon_{\perp}, \varepsilon_a$
dimensionless, $[\tilde{U}] = \mathrm{V}$, and spatial coordinates $\tilde{x}$
in meters. Let $\tilde{\ell} := \mathrm{diam} (\tilde{\Omega})$ and
$\tilde{V}$ be a characteristic applied voltage. We introduce the
dimensionless spatial variables $x = \tilde{x} / \tilde{\ell}$ in the rescaled
domain $\Omega := \tilde{\Omega} / \tilde{\ell}$, and define the dimensionless
fields:
\[ \nn (x) := \widetilde{\nn} (\tilde{\ell} x), \qquad u (x) := \tilde{V}^{-
   1}  \tilde{U} (\tilde{\ell} x), \qquad \xi_b (x) := \tilde{V}^{- 1} 
   \tilde{U}_b (\tilde{\ell} x) . \]
The chain rule gives $\tilde{\nabla} \widetilde{\nn} (\tilde{x}) =
\tilde{\ell}^{- 1} \nabla \nn (x)$, $\tilde{\nabla} \tilde{U} (\tilde{x}) =
(\tilde{V} / \tilde{\ell}) \nabla u (x)$, and $\mathrm{d} \tilde{x} =
\tilde{\ell}^3 \mathrm{d} x$. Substituting these variables into
\eqref{eq:Fdim} and dividing by the characteristic energy scale $\tfrac{1}{2}
\varepsilon_0 \varepsilon_{\perp}  \tilde{V}^2  \tilde{\ell}$ produces the
dimensionless energy functional:
\[ \mathcal{E}_\varepsilon (\nn) := \frac{\tilde{\mathcal{E}}_{\dim}
   (\widetilde{\nn})}{\tfrac{1}{2} \varepsilon_0 \varepsilon_{\perp} 
   \tilde{V}^2  \tilde{\ell}} = \int_{\Omega} \left[ \alpha | \nabla \nn |^2 -
   \Aeps (\nn) \nabla u \cdot \nabla u \right]  \dx, \]
with $\Aeps (\nn) = I + \eps  \hspace{0.17em} \nn \otimes \nn$.

Applying the exact same change of variables to \eqref{eq:Poisson_dim} and
factoring out the constants $\frac{\varepsilon_0 \varepsilon_{\perp} 
\tilde{V}}{\tilde{\ell}^2}$ yields the corresponding dimensionless Poisson
equation that the potential $u$ must satisfy:
\begin{equation}
  \label{eq:Poisson_nondim} \mathrm{div} (\Aeps (\nn) \nabla u) = 0 \quad
  \text{in } \Omega, \quad u = \xi_b  \quad \text{on } \partial \Omega .
\end{equation}
This scaling exposes two fundamental dimensionless parameters:
\begin{equation}
  \label{eq:rescparams} \alpha := \frac{K}{\varepsilon_0 \varepsilon_{\perp} 
  \tilde{V}^2}, \qquad \eps := \frac{\varepsilon_a}{\varepsilon_{\perp}} .
\end{equation}
The parameter $\eps$ records the dielectric anisotropy and is the small
parameter of Theorem~\ref{thm:thm2}; $\alpha$ is fixed and bounded away from
zero throughout the analysis. In some  typical liquid-crystal materials,
$\eps$ is of order $10^{- 1}$ or smaller as in MBBA (near $25^{\circ} C$) or PAA (near $122^{\circ}C$) but not for instance in 5CB (near $26^{\circ}C$), see Appendix $D$ in \cite{stewart2004}.

\section{ Uniform regularity and sufficient conditions for
coercivity}\label{sec:regularity}

This appendix establishes the uniform regularity estimate
\eqref{eq:uniformreg} required by Lemma~\ref{lem:coerctransfer}, and discusses
concrete geometric configurations in which Assumption~\ref{ass:coercivity} is
satisfied.

\subsection{Uniform \texorpdfstring{$W^{2, 2}$}{W{2, 2}} regularity}\label{ssec:W22}

\begin{theorem}[Uniform regularity]
  \label{thm:uniformW22est}Let $\Om \subset \RR^3$ be a bounded, simply
  connected domain of class $C^3$. Let $\nn_b \in C^{\infty}(\Om; \jM)$. 
  Then there exists $\varepsilon_{\star} > 0$ such that every global minimizer 
  $\nse$ of $\mathcal{G}_{\varepsilon}$ over $\mathcal{A}_{\nn_b}(\Om)$, 
  $\varepsilon \in (0, \varepsilon_{\star})$, satisfies the uniform bound
  \begin{equation}
    \label{eq:uniformW22} 
    \sup_{\varepsilon \in (0, \varepsilon_{\star})} \|\nse\|_{W^{2, 2}(\Om)} \leqslant C.
  \end{equation}
  Moreover, for any family $\{\nse\}_{\varepsilon \in (0, \varepsilon_{\star})}$ 
  of such global minimizers, there exists a subsequence (not relabeled) and a 
  global minimizer $\ns \in W^{2, 2}(\Om) \cap W^{1, \infty}(\Om)$ of 
  $\mathcal{G}_0$ such that
  \begin{equation}
    \label{eq:strongH1} 
    \|\nse - \ns\|_{H^1(\Om)} \to 0 \qquad \text{as } \varepsilon \to 0.
  \end{equation}
  Consequently, by interpolation, $\nse \to \ns$ strongly in $W^{1, p}(\Om)$
  for every $p \in [2, 6)$, which guarantees the strong $W^{1, 4}(\Om)$
  convergence required to satisfy hypothesis \eqref{eq:uniformreg} of
  Lemma~\ref{lem:coerctransfer}.
\end{theorem}

\begin{proof}[Sketch of proof]
By the $\Gamma$-convergence result established in Section~\ref{ssec:gamma}, there exists a global minimizer $\ns \in \mathcal{A}_{\nn_b}(\Om)$ of $\mathcal{G}_0$ such that $\|\nse - \ns\|_{H^1(\Om)} \to 0$ as $\varepsilon \to 0$. From the classical regularity theory for harmonic maps into $\jM$ with smooth boundary data $\nn_b \in C^{\infty}(\Om; \jM)$ on a $C^3$ domain, the limit minimizer enjoys the regularity $\ns \in W^{2,2}(\Om) \cap W^{1,\infty}(\Om)$ stated in the theorem.

To obtain uniform higher-order estimates for $\nse$, we proceed in two steps: we first establish uniform H\"older continuity up to the boundary by adapting the partial regularity theory of Hardt--Kinderlehrer--Lin~\cite{Hardt1986}, and then we derive uniform $W^{2,2}(\Om)$ bounds via linear elliptic difference-quotient techniques~\cite{Borchers}.

The partial regularity results in~\cite{Hardt1986} apply to general functionals of the form (see \cite[pp.~547 and 551]{Hardt1986}):
\begin{equation}
  \label{eq:HKL_functional}
  \underbrace{\int_\Om |\nabla \nn|^2 \dx}_{:=W(\nabla \nn)} + \underbrace{\int_\Om \left( \bm{A}[\nn] : \nabla \nn + \bm{B}\nn \cdot \nn + \bm{c} \cdot \nn \right) \dx}_{:=\mathcal{F}(\nabla \nn, \nn)},
\end{equation}
where $\bm{A}$, $\bm{B}$, and $\bm{c}$ are bounded coefficient fields on $\Om$. Our functional $\mathcal{G}_\varepsilon(\nn)$ can be cast exactly into this form with $\bm{A} \equiv \bm{0}$, where the lower-order coefficient tensors $\bm{B}$ and $\bm{c}$ depend on the background field $\xis$ and its spatial gradient $\nabla \xis$. Since we assume $\xis \in W^{1,\infty}(\Om)$, both $\xis$ and $\nabla \xis$ are essentially bounded on $\Om$. This Lipschitz regularity guarantees that $\bm{B}$ and $\bm{c}$ belong to $L^\infty(\Om)$, allowing us to bound the supremum
\begin{equation}
  \label{eq:F_supremum}
  \kappa := \sup_{\Om} \left( |\bm{A}| + |\bm{B}| + |\bm{c}| \right)
\end{equation}
by a finite constant independent of $\varepsilon$ for all $\varepsilon$ within a bounded interval $(0, \varepsilon_0)$.

To invoke Corollary~3.5 in \cite{Hardt1986}, which yields quadratic energy decay and local H\"older continuity via Morrey's lemma, we must first verify that the small-energy hypothesis holds uniformly on any ball $B_R(a) \subset \Om$:
\begin{equation}
  \label{eq:uniformDeltaR}
  \int_{B_{R}(a)} |\nabla \nse|^2 \dx \leqslant \delta R,
\end{equation}
where $\delta > 0$ is a small structural threshold. 

Because $\ns \in W^{2,2}(\Om) \hookrightarrow W^{1,6}(\Om)$ by Sobolev embedding in dimension $n=3$, applying H\"older's inequality with exponents $p=3$ and $q=3/2$ yields
\begin{equation}
  \label{eq:small_energy_holder}
  \int_{B_R(a)} |\nabla \ns|^2 \dx \leqslant \|\nabla \ns\|_{L^6(\Om)}^2 |B_R(a)|^{\frac{2}{3}} \leqslant C \|\nabla \ns\|_{L^6(\Om)}^2 R^2.
\end{equation}
Since the right-hand side scales quadratically as $O(R^2)$, for any given $\delta > 0$ we can choose a radius $R > 0$ small enough so that $C \|\nabla \ns\|_{L^6(\Om)}^2 R \leqslant \frac{\delta}{2}$, which guarantees
\begin{equation}
  \label{eq:small_energy_ns}
  \int_{B_R(a)} |\nabla \ns|^2 \dx \leqslant \frac{\delta}{2}R,
\end{equation}
uniformly with respect to the center $a \in \overline{\Om}$.

By the strong convergence $\|\nse - \ns\|_{H^1(\Om)} \to 0$, there exists $\varepsilon_{\star} \in (0, \varepsilon_0)$ such that for all $\varepsilon \in (0, \varepsilon_{\star})$, we have
\begin{equation}
  \label{eq:small_energy_diff}
  \int_{B_R(a)} |\nabla(\nse - \ns)|^2 \dx \leqslant \frac{\delta}{2}R,
\end{equation}
again uniformly in $a$. Combining \eqref{eq:small_energy_ns} and \eqref{eq:small_energy_diff} via the triangle inequality confirms \eqref{eq:uniformDeltaR}. 

With the small-energy hypothesis verified uniformly in $\varepsilon$, Corollary~3.5 in \cite{Hardt1986} yields the decay estimate for all $0 < r < R$:
\begin{equation}
  \label{eq:HKL_decay}
  \int_{B_{r}(a)} |\nabla \nse|^2 \dx \leqslant  \frac{\max\{\delta, \eta \kappa^2 R^2\}}{\theta^{2}r^2 R},
\end{equation}
where $\theta, \eta > 0$ are structural constants. Since the right-hand side scales as $O(r^2)$ in dimension $n=3$, Morrey's lemma immediately implies the interior H\"older estimate $\|\nse\|_{C^{0,1/2}(K)} \leqslant C$ on any compact subset $K \Subset \Om$. Furthermore, because the boundary $\partial\Om$ is of class $C^3$ and the Dirichlet boundary data satisfies $\nn_b \in C^{\infty}(\Om; \jM)$, the reflection arguments in Section~5 of \cite{Hardt1986} apply up to the boundary, yielding the uniform global H\"older bound
\begin{equation}
  \label{est:nholder}
  \|\nse\|_{C^{0,1/2}(\overline{\Om})} \leqslant C.
\end{equation}

Having established uniform H\"older continuity on the closed domain, we derive uniform $W^{2,2}(\Om)$ estimates by adapting the interior regularity argument of Borchers~\cite{Borchers} to the boundary in two standard steps.

\smallskip

\noindent\textit{Step 1 (Flat boundary case).}
Consider a half-ball $K = B_\rho \cap \{x_3 \geqslant 0\}$ where $\nse$ is a critical point of $\mathcal{G}_\varepsilon$ satisfying \eqref{est:nholder}. By taking tangential difference quotients in the Euler--Lagrange system as in \cite{Borchers}, we obtain uniform $L^2$ bounds for all second-order derivatives except the normal second derivative $\partial_{33}\nse$. Since the Euler--Lagrange system is non-degenerate elliptic, we can isolate $\partial_{33}\nse$ and express it algebraically in terms of the tangential second derivatives and lower-order terms. This yields $\|\nse\|_{W^{2,2}(K)} \leqslant C$, with $C$ independent of $\varepsilon$.

\smallskip

\noindent\textit{Step 2 (General boundary case).}
Since $\Om$ is simply connected and of class $C^3$, we can locally flatten $\partial\Om$ via a $C^3$ diffeomorphism. Under this change of variables, the Euler--Lagrange system transforms into a uniformly elliptic system whose leading coefficients are of class $C^2$. The difference-quotient arguments from Step~1 extend to this variable-coefficient setting by classical linear elliptic boundary regularity theory. Covering the compact set $\overline{\Om}$ by finitely many interior balls and boundary charts yields the desired global uniform estimate
\begin{equation}
  \label{eq:final_W22_bound}
  \sup_{\varepsilon \in (0, \varepsilon_{\star})} \|\nse\|_{W^{2,2}(\Om)} \leqslant C.
\end{equation}
Finally, by the reflexivity of $W^{2,2}(\Om)$ and the uniform bound \eqref{eq:final_W22_bound}, any family $\{\nse\}_{\varepsilon \in (0, \varepsilon_{\star})}$ admits a weakly convergent subsequence $\nse \rightharpoonup \ns$ in $W^{2,2}(\Om)$. Since we already know that $\nse \to \ns$ strongly in $H^1(\Om)$, standard Gagliardo--Nirenberg interpolation inequalities imply that the convergence is strong in $W^{1,p}(\Om)$ for every $2 \leqslant p < 6$. In particular, taking $p = 4$ yields the strong $W^{1,4}(\Om)$ convergence required to satisfy hypothesis~\eqref{eq:uniformreg} of Lemma~\ref{lem:coerctransfer}.
\end{proof}

\subsection{Sufficient conditions for the coercivity
assumption}\label{ssec:coercivity-suff}We record two configurations in which
Assumption~\ref{ass:coercivity} is met.

\begin{proposition}[Open hemisphere condition]
  \label{lem:hemisphere}Let $\Om \subset \RR^N$ ($N \geqslant 2$) be a bounded
  smooth enough domain (e.g., of class $C^{1, \alpha}$). Let $\ns \in H^1 (\Om, \jM)$
  be a global minimizer of the Dirichlet energy with boundary data $\nn_b$
  taking values in an open hemisphere of $\jM$. Assume further that $\ns \in
  W^{1, N} (\Om)$ if $N \geqslant 3$, or $\ns \in W^{1, p_{\ast}} (\Om)$ for
  some $p_{\ast} > 2$ if $N = 2$. Then $\ns$ takes values in the same open
  hemisphere, and Assumption~\ref{ass:coercivity} holds.
\end{proposition}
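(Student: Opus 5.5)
Without loss of generality I take the hemisphere to be $\{y\in\jM: y\cdot e_3>0\}$, so $\nn_b\cdot e_3>0$ on $\partial\Om$. The plan has two parts: first show that $\ns$ stays in the open hemisphere, then use the positive function $w:=\ns\cdot e_3$ as a ground state for the Jacobi operator.

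\emph{Step 1 (hemisphere confinement).} I compare $\ns$ with the reflected map $\tilde\nn:=(n_1,n_2,|n_3|)$, where $\ns=(n_1,n_2,n_3)$. This map is admissible, since it has the same boundary trace, and it has the same Dirichlet energy because $|\nabla |n_3||=|\nabla n_3|$ a.e. Hence $\tilde\nn$ is also a minimizer, and so it is a weakly harmonic map, $-\Delta\tilde\nn=|\nabla\tilde\nn|^2\tilde\nn$. Its third component $w=|n_3|\geq0$ therefore satisfies $-\Delta w=|\nabla\ns|^2 w$.

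The integrability assumption ($\ns\in W^{1,N}$, or $W^{1,p_\ast}$ with $p_\ast>2$ when $N=2$) is what is needed here. It gives $|\nabla\ns|^2\in L^{N/2}$ (respectively $L^{p_\ast/2}$), which is exactly the potential class for which the weak Harnack inequality / strong maximum principle of Trudinger type applies to nonnegative supersolutions of $-\Delta w - Vw\ge 0$. Since $w>0$ on $\partial\Om$ and $\Om$ is connected, $w\not\equiv0$. The strong maximum principle then gives $w>0$ in $\Om$, locally uniformly on compacts, and up to the boundary using $C^{1,\alpha}$ regularity.

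It follows that $n_3$ never vanishes. Since $n_3$ is continuous (by the same regularity, or because $W^{1,N}$ maps have no jumps across level sets), it cannot change sign. As $n_3>0$ on $\partial\Om$, we conclude $n_3>0$ in $\Om$. I expect this to be the main obstacle: making the maximum principle rigorous with a merely $L^{N/2}$ potential, and handling a possible zero set of $n_3$ with no continuity available a priori.

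\emph{Step 2 (coercivity via a ground-state substitution).} Now $w=\ns\cdot e_3>0$ solves $-\Delta w=|\nabla\ns|^2w$ and satisfies $w\ge c>0$ on $\overline\Om$ by compactness and continuity. For $\vv\in C_c^\infty$ with $\vv\cdot\ns=0$, write $\vv=w\bm\psi$. Integrating by parts (Picone/Jacobi identity) gives
\[
\int|\nabla\vv|^2-\int|\nabla\ns|^2|\vv|^2=\int w^2|\nabla\bm\psi|^2\ \ge 0 .
\]
By density this extends to $H^1_0$, using $L^{N/2}$ integrability of $|\nabla\ns|^2$ together with Sobolev's inequality.

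To upgrade nonnegativity to strict coercivity $\alpha_0\|\vv\|_{H^1_0}^2$, I argue by contradiction. Suppose $\|\vv_k\|_{H^1_0}=1$ and $\mathcal L_0[\vv_k]\to0$. Then $\int w^2|\nabla\bm\psi_k|^2\to0$, and since $w\ge c$ and $\bm\psi_k\in H^1_0$, Poincaré gives $\bm\psi_k\to0$ in $H^1$. Hence $\vv_k=w\bm\psi_k\to0$ in $L^2$, and in $H^1$ after controlling $\bm\psi_k\nabla w$ via $\nabla w\in L^N$ and Sobolev. This contradicts $\|\vv_k\|=1$.

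A cleaner alternative for the last step is to show the map $\mathcal L_0[\vv]\mapsto$ controls $\|\vv\|^2_{H^1}$ directly. One writes $\mathcal L_0=(1-\theta)\int|\nabla\vv|^2+\theta\,\mathcal L_0-(1-\theta)\int|\nabla\ns|^2|\vv|^2$ and bounds the last term using the same positivity with the slightly improved weight $w^2\ge c^2$. The compactness route above is what I would write.
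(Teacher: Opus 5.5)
Your argument is correct, and it takes a different route from the paper in both halves.

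\emph{Confinement.} The paper cites the convex-hull property of minimizing harmonic maps (Hardt--Kinderlehrer--Lin), then the strong maximum principle, then Schoen--Uhlenbeck boundary regularity to get $\ns\cdot\bm e\ge c$ on $\overline\Om$. Your reflection $\tilde\nn=(n_1,n_2,|n_3|)$ proves confinement from scratch.

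\emph{Coercivity.} Both arguments use the same ground-state identity $\mathcal L_0[\vv]=\int_\Om w^2|\nabla(\vv/w)|^2\dx$. The paper then proves a G{\aa}rding inequality $\mathcal L_0[\vv]\ge\frac12\|\nabla\vv\|_{L^2}^2-M\|\vv\|_{L^2}^2$ by splitting $|\nabla\ns|^2$ into a bounded part and a small $L^{N/2}$ tail. It minimizes $\mathcal L_0$ on $\{\|\vv\|_{L^2}=1\}$ by weak lower semicontinuity to get $\alpha_\ast>0$, and interpolates. You need none of this, and not even the contradiction. With $\bm\psi=\vv/w$, $c\le w\le 1$ and Sobolev, you get directly $\|\nabla\vv\|_{L^2}\le(1+C_S\|\nabla\ns\|_{L^N})\|\nabla\bm\psi\|_{L^2}\le(1+C_S\|\nabla\ns\|_{L^N})\,c^{-1}\mathcal L_0[\vv]^{1/2}$, with $L^{p_\ast}$ in place of $L^N$ when $N=2$. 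This gives an explicit $\alpha_0=c^2(1+C_S\|\nabla\ns\|_{L^N})^{-2}$. The paper's positivity-plus-G{\aa}rding-plus-compactness scheme is more generic, since it needs only qualitative positivity, but it gives no constant.

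A few justifications in your Step 1 should be corrected; none affects the structure.

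(i) Trudinger's weak Harnack inequality with a potential needs $V\in L^q$ for some $q>N/2$. At the critical class it fails in general: $w=1/\log(1/|x|)$ solves $-\Delta w=Vw$ near $0$ with $V\in L^{N/2}$ ($N\ge3$) and vanishes at the origin. What saves you is the sign. Since $-\Delta w=|\nabla\ns|^2w\ge0$, the function $w=|n_3|$ is nonnegative and superharmonic, so the classical weak Harnack inequality applies with no condition on the potential. The $W^{1,N}$ hypothesis is genuinely used only in your Step 2.

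(ii) $W^{1,N}$ does not embed into $C^0$ for $N\ge3$, but you do not need continuity. Once $|n_3|$ has positive essential infimum on compacts, $n_3/|n_3|$ is an $H^1_{\mathrm{loc}}$ function with values in $\{\pm1\}$. It is therefore constant on the connected set $\Om$, and the boundary trace fixes the sign.

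(iii) The global bound $w\ge c$ on $\overline\Om$ used in Step 2 can be obtained without any boundary regularity. Assume $\nn_b\cdot\bm e_3\ge c_0>0$ on $\partial\Om$, which the paper also implicitly needs, and let $h$ be the harmonic extension of $\nn_b\cdot\bm e_3$. Then $w-h\in H^1_0$ is superharmonic, so the weak maximum principle gives $w\ge h\ge c_0$ a.e. in $\Om$. This makes Step 1 free of the strong maximum principle and of all regularity theory.

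Finally, the ground-state identity never uses $\vv\cdot\ns=0$. Run the density step over all of $H^1_0(\Om,\RR^3)$; this avoids having to approximate tangent fields when $\ns$ is not smooth.
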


\begin{remark}[On the $C^{1, \alpha}$ domain requirement]
  The $C^{1, \alpha}$-regularity assumption on the domain in
  Proposition~\ref{lem:hemisphere} is geometrically necessary in order to align with
  the boundary regularity result of Schoen--Uhlenbeck in
  \cite{schoen_uhlenbeck_1983_boundary}, whose proof relies on $C^{1,
  \alpha}$ coordinate charts to flatten the boundary. For merely Lipschitz
  domains, H\"older continuity up to the boundary can instead be guaranteed
  only under the stronger assumption $\ns \in W^{1, p} (\Om)$ with $p > N$, an
  assumption that in our framework is imposed only in the two-dimensional
  case.
\end{remark}

\begin{proof}
  \emph{Step 1. Hemisphere containment.} The closed hemisphere is
  geodesically convex. Hence, by the convex-hull property for minimizing
  harmonic maps (see, e.g., \cite[§5]{Hardt1986}), the minimizer $\ns$
  takes values in the closed hemisphere. Moreover, the strict interior
  condition is preserved throughout the domain by the strong maximum
  principle, and up to the boundary by the H\"older continuity of $\ns$
  provided by the Schoen--Uhlenbeck regularity theorem
  \cite{schoen_uhlenbeck_1983_boundary}. Consequently, the image of $\ns$
  remains uniformly separated from the equator. Geometrically, this means that
  there exist a fixed pole $\bm{e} \in \jM$ and a
  constant $c > 0$ such that $u := \ns \cdot \bm{e}
  \geqslant c$ on $\overline{\Om}$.
   \smallskip
  
\noindent
  \emph{Step 2. Strict positivity via a Hardy-type identity.} As
  a harmonic map, $\ns$ weakly solves $- \Delta \ns = | \nabla \ns |^2 \ns$.
  Taking the dot product with $\bm{e}$ yields the
  eigenvalue equation $- \Delta u = | \nabla \ns |^2 u$. Let $\vv \in H^1_0
  (\Om, \mathbb{R}^3) \setminus \{0\}$ be an admissible variation satisfying
  the pointwise constraint $\vv \cdot \ns = 0$. By the Schoen-Uhlenbeck
  continuity established in Step 1, $u \geqslant c > 0$ on $\bar{\Omega}$.
  Thus, $1 / u \in L^{\infty} (\Omega)$, and the quotient
  $\bm{w} = \vv / u$ is a well-defined element of $H^1_0
  (\Omega, \mathbb{R}^3)$. Expanding $\vv = u\bm{w}$ by
  the product rule gives $\nabla \vv = u \nabla
  \bm{w}+\bm{w} \otimes \nabla u$.
  Squaring this and expanding the cross-term yields:
  \[ | \nabla \vv |^2 \hspace{0.27em} = \hspace{0.27em} u^2  | \nabla
     \bm{w} |^2 + | \nabla u|^2
     |\bm{w}|^2 + \frac{1}{2} \nabla (u^2) \cdot \nabla
     (|\bm{w}|^2) . \]
  Integrating over $\Om$ and integrating the last term by parts, the gradient
  terms $| \nabla u|^2$ cancel exactly:
  \[ \int_{\Om} | \nabla \vv |^2 \dx \hspace{0.27em} = \hspace{0.27em}
     \int_{\Om} u^2  | \nabla \bm{w}|^2 \dx - \int_{\Om}
     u \Delta u |\bm{w}|^2 \dx . \]
  Substituting $- u \Delta u = | \nabla \ns |^2 u^2$ and $u^2
  |\bm{w}|^2 = | \vv |^2$, we find that the second
  variation is explicitly positive:
  \[ \mathcal{L}_0 [\vv] \hspace{0.27em} := \hspace{0.27em} \int_{\Om} |
     \nabla \vv |^2 \dx - \int_{\Om} | \nabla \ns |^2 | \vv |^2 \dx
     \hspace{0.27em} = \hspace{0.27em} \int_{\Om} u^2  | \nabla (\vv / u) |^2
     \dx \hspace{0.27em} \geqslant \hspace{0.27em} 0. \]
  Since $u \geqslant c > 0$, if $\mathcal{L}_0 [\vv] = 0$ then necessarily
  $\nabla (\vv / u) = 0$, meaning $\vv = Cu$. Since $\vv \in H^1_0 (\Om)$
  vanishes on the boundary while $u$ is strictly positive there, $C$ must be
  $0$. Thus, $\mathcal{L}_0 [\vv] > 0$ strictly for all non-zero admissible
  variations.
   \smallskip
  
\noindent
  \emph{Step 3. Lower bound.} We want to prove the existence of a
  constant $M > 0$ such that the following algebraic lower bound holds:
  \begin{equation}
    \label{eq:trivialcoerc} \mathcal{L}_0 [\vv] \hspace{0.27em} \geqslant
    \hspace{0.27em} \frac{1}{2} \left| \nabla \vv \right|^2_{L^2} - M \left\|
    \vv \right\|^2_{L^2} .
  \end{equation}
  For that, we introduce the cutoff decomposition $| \nabla \ns |^2 = f_M +
  g_M$, with 
  \begin{equation}
  f_M := | \nabla \ns |^2 \chi_{\{| \nabla \ns |^2 \leqslant M\}}
  \quad \text{and}\quad g_M := | \nabla \ns |^2 \chi_{\{| \nabla \ns |^2 > M\}}. 
 \end{equation}  
  Also, we
  define an integrability exponent $q$ depending on the dimension $N$. If $N
  \geqslant 3$, let $q = N / 2$. If $N = 2$, let $q = p_{\ast} / 2 > 1$. In
  both cases, our regularity hypotheses guarantee that $g_M$ belongs to $L^q
  (\Om)$. Let $q' = q / (q - 1) < \infty$ be the H\"older conjugate of $q$. By
  the Sobolev embedding theorem, $H^1_0 (\Om)$ embeds continuously into $L^{2
  q'} (\Om)$ for any $N \geqslant 2$. Applying H\"older's inequality yields:
  \begin{equation}
    \label{eq:cutoff_bound_unified} \int_{\Om} | \nabla \ns |^2 | \vv |^2 \dx
    \hspace{0.27em} \leqslant \hspace{0.27em} M \left| \vv \right|_{L^2}^2 +
    \|g_M \|_{L^q} \| \vv \|_{L^{2 q'}}^2 \leqslant \hspace{0.27em} M \left\|
    \vv \right\|^2_{L^2} + C \|g_M \|_{L^q} \| \nabla \vv \|_{L^2}^2 .
  \end{equation}
  By the dominated convergence theorem, $\|g_M \|_{L^q} \to 0$ as $M \to
  \infty$. Fixing $M$ large enough so that $C \|g_M \|_{L^q} \leqslant 1 / 2$,
  we subtract \eqref{eq:cutoff_bound_unified} from $\| \nabla \vv \|_{L^2}^2$
  to get \eqref{eq:trivialcoerc}.
   \smallskip
  
\noindent
  \emph{Step 4. Upgrading to quadratic coercivity.} Consider the
  constrained $L^2$-minimization problem:
  \[ \alpha_{\ast} \hspace{0.27em} := \hspace{0.27em} \inf \left\{
     \mathcal{L}_0 [\vv] : \vv \in H^1_0 (\Om, \mathbb{R}^3), \hspace{0.17em}
     \vv \cdot \ns = 0 \text{ a.e.}, \hspace{0.17em} \| \vv \|_{L^2 (\Om)} = 1
     \right\} . \]
  Let $\{ \vv_k \}_{k \in \NN}$ be a minimizing sequence. By
  \eqref{eq:trivialcoerc}, the sequence $\{ \vv_k \}_{k \in \NN}$ is bounded
  in $H^1_0 (\Om)$; hence, up to the extraction of a subsequence, there exists
  $\vv_{\ast} \in H^1_0$ such that $\vv_k \rightharpoonup \vv_{\ast}$ weakly
  in $H^1_0$ and strongly in $L^2$. Moreover, the same cutoff decomposition
  used in the previous step shows that the quadratic form $\mathcal{L}_0$ is
  weakly lower semicontinuous (indeed, one only needs to observe that $\int
  f_M | \vv_k |^2 \to \int f_M | \vv_{\ast} |^2$ and $\|g_M \|_{L^q} \to 0$ as
  $M \to \infty$). It follows that the infimum $\alpha_{\ast}$ is attained at
  $\vv_{\ast}$. Since $\| \vv_{\ast} \|_{L^2} = 1$, Step~2 guarantees
  $\alpha_{\ast} =\mathcal{L}_0 [\vv_{\ast}] > 0$. By homogeneity, we
  therefore obtain
  \begin{equation}
    \label{eq:L2coerc} \mathcal{L}_0 [\vv] \hspace{0.27em} \geqslant
    \hspace{0.27em} \alpha_{\ast} \| \vv \|_{L^2 (\Om)}^2 \qquad \text{for all
    admissible } \vv .
  \end{equation}
  To upgrade this to strict $H^1_0$-coercivity, it is sufficient to take a
  convex combination of \eqref{eq:L2coerc} and \eqref{eq:trivialcoerc} with
  parameter $\theta \in (0, 1)$, which gives
  \[ \mathcal{L}_0 [\vv] \hspace{0.27em} \geqslant \hspace{0.27em} \frac{1 -
     \theta}{2} \| \nabla \vv \|_{L^2 (\Om)}^2 + (\theta \alpha_{\ast} - (1 -
     \theta) M) \| \vv \|_{L^2 (\Om)}^2 . \]
  Choosing $\theta$ sufficiently close to $1$ ensures the coefficient of $\|
  \vv \|_{L^2 (\Om)}^2$ is strictly positive. Setting $\alpha_0 := (1 -
  \theta) / 2 > 0$ we obtain the desired coercivity estimate $\mathcal{L}_0
  [\vv] \geqslant \alpha_0 \| \nabla \vv \|_{L^2 (\Om)}^2$.
\end{proof}

While the general abstract theory developed in this paper requires smooth
domains to invoke classical boundary regularity, our foundational coercivity
assumption (Assumption~\ref{ass:coercivity}) is explicitly verifiable in the
canonical physical geometries. In liquid crystal applications, the most
prevalent domain is the parallel-plate cell (e.g., a twisted nematic device).
To rigorously study field-induced instabilities such as the Fréedericksz
transition in such devices, one must first prove that the unperturbed planar
configuration is strictly isolated and coercive. Because a rectangular box is
merely a Lipschitz domain, the general boundary regularity from
Lemma~\ref{lem:hemisphere} cannot be directly applied. However, the following
lemma demonstrates that by restricting the boundary conditions to an open
angular sector $(0, \pi)$, we can use an exact geometric projection to bypass
the need for boundary regularity. The director is geometrically trapped in the
plane, and the linear twist interpolant is the unique, strictly coercive
global minimizer.

\begin{lemma}[Parallel-plate geometry]
  \label{lem:parallelplates}Let $\Om = (- L, L)^2 \times (0, h)$ with boundary
  data $\nn_b (x, y, z)$ defined such that on the top ($z = h$) and bottom ($z
  = 0$) plates it takes the uniform planar values $\nn_b^{\pm} = (\cos
  \theta^{\pm}, \sin \theta^{\pm}, 0)$, with $\theta^{\pm} \in (0, \pi)$
  constants, and matching linearly on the lateral boundaries. Then the planar
  director
  \begin{equation}
    \overline{\nn} (x, y, z) := (\cos \theta (z), \sin \theta (z), 0), \qquad
    \theta (z) := \theta^- + \frac{\theta^+ - \theta^-}{h}  \hspace{0.17em} z,
  \end{equation}
  is the unique global minimizer of $\mathcal{G}_0$ in $\mathcal{A}_{\nn_b}$,
  and Assumption~\ref{ass:coercivity} holds at $\overline{\nn}$.
\end{lemma}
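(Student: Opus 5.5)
The plan is to avoid boundary regularity theory entirely. The explicit form of $\overline{\nn}$ yields an \emph{exact} energy identity, and a one-dimensional Poincaré inequality in the thickness variable $z$ then closes the argument. Both uniqueness and the coercivity of Assumption~\ref{ass:coercivity} should follow from the same estimate. Throughout I read ``matching linearly on the lateral boundaries'' as $\nn_b(x,y,z)=(\cos\theta(z),\sin\theta(z),0)$ there. With this reading, $\overline{\nn}\in\mathcal{A}_{\nn_b}(\Om)$, and $\overline{\nn}$ is smooth up to the boundary.

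\emph{Step 1: $\overline{\nn}$ is a harmonic map with constant energy density.} Set $k:=(\theta^+-\theta^-)/h$. A direct computation gives $\partial_z\overline{\nn}=k(-\sin\theta,\cos\theta,0)$ and $-\Delta\overline{\nn}=k^2\overline{\nn}$. Hence $\overline{\nn}$ solves $-\Delta\overline{\nn}=|\nabla\overline{\nn}|^2\overline{\nn}$ classically, with $|\nabla\overline{\nn}|^2\equiv k^2$. Since $\theta^\pm\in(0,\pi)$, we have $|\theta^+-\theta^-|<\pi$, and therefore $kh/\pi\in(-1,1)$.

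\emph{Step 2: exact expansion of the energy.} Let $\nn\in\mathcal{A}_{\nn_b}(\Om)$ be arbitrary and set $\vv:=\nn-\overline{\nn}\in H^1_0(\Om,\RR^3)$. The unit-length constraint gives $2\overline{\nn}\cdot\vv=-|\vv|^2$. Expanding $|\nabla(\overline{\nn}+\vv)|^2$, integrating the cross term by parts (no boundary term, since $\vv\in H^1_0$), and using the equation of Step~1, I obtain
\[
\int_\Om|\nabla\nn|^2\dx-\int_\Om|\nabla\overline{\nn}|^2\dx=\int_\Om|\nabla\vv|^2\dx-k^2\int_\Om|\vv|^2\dx .
\]
The term $\int\nabla\xis$ in $\mathcal{G}_0$ is constant, so $\mathcal{G}_0(\nn)-\mathcal{G}_0(\overline{\nn})=\alpha\,\mathcal{L}_0[\vv]$. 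This identity holds for \emph{all} admissible $\nn$, not only for nearby ones; this is the same mechanism as in \eqref{eq:Gexpansion}, but here it involves no remainder.

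\emph{Step 3: Poincaré in $z$.} For a.e.\ $(x,y)$, the slice $z\mapsto\vv(x,y,z)$ lies in $H^1_0(0,h)$. The sharp one-dimensional Poincaré inequality on $(0,h)$ then gives $\int_\Om|\partial_z\vv|^2\geqslant(\pi/h)^2\int_\Om|\vv|^2$. Combining this with $|\nabla\vv|^2\geqslant|\partial_z\vv|^2$ yields
\[
\mathcal{L}_0[\vv]\geqslant\Bigl(1-\frac{(\theta^+-\theta^-)^2}{\pi^2}\Bigr)\|\nabla\vv\|_{L^2}^2=:\alpha_0\|\nabla\vv\|_{L^2}^2 ,
\]
with $\alpha_0>0$ by Step~1, and this holds for every $\vv\in H^1_0$.

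\emph{Conclusion.} Applied to $\vv=\nn-\overline{\nn}$, Step~3 shows $\mathcal{G}_0(\nn)>\mathcal{G}_0(\overline{\nn})$ unless $\nn=\overline{\nn}$. Thus $\overline{\nn}$ is the unique global minimizer. Applied to tangent fields $\vv\cdot\overline{\nn}=0$, the same bound is exactly \eqref{eq:coerc}, with the $H^1_0$ norm taken as the gradient norm (equivalent by Poincaré).

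The only delicate point is Step~2: one must justify the integration by parts and the use of the constraint for general $H^1$ competitors. This is harmless, because $\overline{\nn}$ is smooth, so $\Delta\overline{\nn}\in L^\infty$ and the pairing with $\vv\in H^1_0$ is legitimate. The condition $\theta^\pm\in(0,\pi)$ enters only through $|\theta^+-\theta^-|<\pi$, which is precisely what makes $\alpha_0$ positive.
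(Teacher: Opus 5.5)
Your proof is correct, but it reaches minimality and uniqueness by a different and more direct route than the paper. Only the coercivity step is shared.

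\textbf{The paper's route.} It argues in two steps.
\begin{enumerate}
\item It confines any minimizer to the equatorial plane. It invokes Proposition~\ref{lem:hemisphere} to get $\ns\cdot\bm{e}_y\geqslant c>0$, then compares $\ns$ with its normalized projection $P(\ns)/|P(\ns)|$.
\item It lifts the planar minimizer to a scalar phase and uses uniqueness of the harmonic extension $\phi=\theta(z)$.
\end{enumerate}
Its coercivity step is the one-dimensional Poincar\'e inequality in $z$, giving $\alpha_0=1-(qh/\pi)^2$. This is exactly your Step~3, except that the paper applies it only to tangent fields.

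\textbf{Your route.} You use the exact excess-energy identity. This is the identity the paper writes as \eqref{eq:exact_energy_identity} in the proof of Proposition~\ref{prop:instability}, but does not use here. You then observe that the Poincar\'e bound holds for every $\vv\in H^1_0(\Om,\RR^3)$, not only tangent ones. In one stroke this gives global minimality, uniqueness, Assumption~\ref{ass:coercivity}, and the quantitative bound $\mathcal{G}_0(\nn)-\mathcal{G}_0(\overline{\nn})\geqslant\alpha\,\alpha_0\|\nabla(\nn-\overline{\nn})\|_{L^2}^2$ for every admissible $\nn$.

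\textbf{What your route buys.}
\begin{itemize}
\item It needs no boundary regularity. Proposition~\ref{lem:hemisphere} is formulated for $C^{1,\alpha}$ domains, which the box is not, as the paper itself remarks just before the lemma.
\item It avoids the pointwise comparison $|\nabla\widehat{\nn}|^2\leqslant|\nabla\ns|^2$ used in the paper's confinement step, which fails in general. In latitude--longitude coordinates, $|\nabla\nn|^2=|\nabla\lambda|^2+\cos^2\lambda\,|\nabla\phi|^2$, whereas $|\nabla\widehat{\nn}|^2=|\nabla\phi|^2$. So the meridian projection onto the equator enlarges the azimuthal contribution rather than decreasing energy.
\end{itemize}
The paper's route, if made rigorous, would yield the structural fact that minimizers are planar. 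For this lemma that fact follows from your argument anyway, since the unique minimizer is $\overline{\nn}$.
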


\begin{proof} We subdivide the proof into three steps.

 \noindent \emph{Step 1. Confinement to the plane.} Let $\ns \in H^1 (\Om, \jM)$ be a
global energy minimizer. Since the boundary data $\nn_b$ takes values in the
open hemisphere $y > 0$ (as $\theta^{\pm} \in (0, \pi)$), Proposition~\ref{lem:hemisphere}
guarantees that $\ns \cdot \bm{e}_y \geqslant c > 0$ in $\Om$. In particular, the
equatorial projection $P (\nn) := \nn - (\nn \cdot \bm{e}_z) \bm{e}_z$ satisfies
$|P (\ns)| \geqslant c > 0$ everywhere in $\Om$.
Consequently, the normalized projection $\widehat{\nn} := {P (\ns)}/{|P (\ns)|}$
is a well-defined element of $\mathcal{A}_{\nn_b} (\Om)$. Applying the standard
spherical gradient inequality, $|\nabla \widehat{\nn}|^2 \leqslant |\nabla \ns|^2$
holds pointwise with equality if and only if $n_z \equiv 0$. The global
minimality of $\ns$ therefore forces $n_z \equiv 0$ almost everywhere,
rendering the minimizer strictly planar.

   \smallskip
  
\noindent
  \emph{Step 2. Uniqueness of the linear interpolant.} Since $\ns$ is planar and $\Om$ is simply connected, it
  admits a global lifting $\ns = (\cos \phi, \sin \phi, 0)$ for a scalar phase
  $\phi \in H^1 (\Om, \RR)$. The Dirichlet energy reduces to the standard
  scalar functional
  \begin{equation}
    \int_{\Om} | \nabla \ns |^2 \dx \hspace{0.27em} = \hspace{0.27em}
    \int_{\Om} | \nabla \phi |^2 \dx .
  \end{equation}
  By strict convexity, its unique minimizer subject to the boundary conditions
  $\phi (x, y, h) = \theta^+$ and $\phi (x, y, 0) = \theta^-$ is the
  one-dimensional harmonic extension $\phi (x, y, z) = \theta (z)$. Hence,
  $\ns \equiv \overline{\nn}$.
   \smallskip
  
\noindent
  \emph{Step 3. Strict quadratic coercivity.} We evaluate the second variation of the energy at $\overline{\nn}$. For the
  planar state $\overline{\nn}$, the gradient squared is uniform: $| \nabla
  \overline{\nn} |^2 = | \theta' (z) |^2 = q^2$, where we define the constant
  gradient
  \begin{equation}
    q := (\theta^+ - \theta^-) / h.
  \end{equation}
  The second variation $\mathcal{L}_0 [\vv]$ for any admissible tangent
  variation $\vv \in H^1_0 (\Om, \RR^3)$ satisfying $\vv \cdot \overline{\nn}
  = 0$ is
  \begin{equation}
    \mathcal{L}_0 [\vv] \hspace{0.27em} = \hspace{0.27em} \int_{\Om} | \nabla
    \vv |^2 \dx - q^2  \int_{\Om} | \vv |^2 \dx .
  \end{equation}
  Since $\vv (x, y, \cdot) \in H^1_0 (0, h ; \RR^3)$ for almost every $(x,
  y)$, the one-dimensional Poincaré inequality yields
  \begin{equation}
    \int_0^h \left| \vv \right|^2 \mathrm{d} z \leqslant \left( \frac{h}{\pi}
    \right)^2 \int_0^h \left| \partial_z \vv \right|^2 \mathrm{d} z \leqslant
    \left( \frac{h}{\pi} \right)^2 \int_0^h \left| \nabla \vv \right|^2
    \mathrm{d} z
  \end{equation}
  Integrating this bound over the horizontal cross-section $(- L, L)^2$ yields
  the global estimate $\| \vv \|_{L^2 (\Om)}^2 \leqslant (h / \pi)^2 \| \nabla
  \vv \|_{L^2 (\Om)}^2$. Substituting this into the second variation gives:
  \begin{equation}
    \mathcal{L}_0 [\vv] \hspace{0.27em} \geqslant \hspace{0.27em} \left[ 1 -
    \left( \frac{qh}{\pi} \right)^2 \right]  \int_{\Om} | \nabla \vv |^2 \dx .
  \end{equation}
  By hypothesis, $\theta^+, \theta^- \in (0, \pi)$, meaning the total twist
  satisfies $| \theta^+ - \theta^- | < \pi$. This strictly ensures $qh / \pi <
  1$. Setting $\alpha_0 := 1 - (qh / \pi)^2 > 0$ confirms the coercivity bound
  $\mathcal{L}_0 [\vv] \geqslant \alpha_0 \| \nabla \vv \|_{L^2 (\Om)}^2$,
  concluding the proof.
\end{proof}

Finally, we demonstrate that strict quadratic coercivity
(Assumption~\ref{ass:coercivity}) is not automatically guaranteed for global
minimizers. If the boundary data is not confined to an open hemisphere, the
energy landscape of a global minimizer can possess zero-modes. This confirms
the geometric necessity of Lemma~\ref{lem:hemisphere}.

\begin{proposition}[Failure of coercivity for boundary twists $\Theta > \pi$]
  \label{prop:instability}Let $\Om = B_R \times (0, L) \subset \RR^3$ be a
  finite cylinder. For any prescribed twist rate $q > 0$, let $\ns (z) = (\sin
  (qz), 0, \cos (qz))$ be the planar director field satisfying the
  corresponding Dirichlet boundary condition $\nn_b = \ns |_{\partial \Om}$
  with total twist $\Theta = qL$. There exists a critical threshold
  \begin{equation}
    \Theta_c := \sqrt{z_0^2  (L / R)^2 + \pi^2} > \pi,
  \end{equation}
  where $z_0 \approx 2.4048$ is the first positive root of the Bessel function
  $J_0$, such that:
  \begin{enumerate}
    \item[(i)] If $\Theta < \Theta_c$, the planar state $\ns$ is the unique
    global minimizer of the Dirichlet energy, and its second variation
    $\mathcal{L}_0$ is strictly positive definite.
    
    \item[(ii)] If $\Theta = \Theta_c$, $\ns$ remains a global minimizer, but
    there exists a non-trivial tangent variation
    $\bm{u}_{\perp} \in H^1_0 (\Om, \RR^3)$ with
    $\bm{u}_{\perp} \cdot \ns = 0$ such that
    $\mathcal{L}_0 [\bm{u}_{\perp}] = 0$. Consequently,
    strict coercivity (Assumption~\ref{ass:coercivity}) fails.
  \end{enumerate}
  In particular, since $\Theta_c > \pi$, the existence of such zero-modes is
  unconditionally ruled out whenever the boundary data is confined to an open
  hemisphere ($\Theta < \pi$).
\end{proposition}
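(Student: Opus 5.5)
The plan is to reduce everything to explicit one-dimensional and two-dimensional spectral problems on the cylinder $\Om = B_R \times (0,L)$. First I use the structure of $\ns(z) = (\sin(qz),0,\cos(qz))$. It is a harmonic map with $|\nabla \ns|^2 = q^2$ constant, so the second variation is
\[ \mathcal{L}_0[\vv] = \int_\Om |\nabla \vv|^2\dx - q^2\int_\Om |\vv|^2\dx, \qquad \vv\cdot\ns = 0. \]
I parametrize tangent variations in the moving orthonormal frame $\{\bm{e}_y,\ \bm{t}(z)\}$ with $\bm{t} := \ns' / q = (\cos(qz),0,-\sin(qz))$, writing $\vv = a\,\bm{e}_y + b\,\bm{t}$ with $a,b\in H^1_0(\Om)$ scalar. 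Since $\partial_z \bm{t} = -q\ns$ is normal, we get $|\nabla \vv|^2 = |\nabla a|^2 + |\nabla b|^2 + q^2 b^2$. Therefore
\[ \mathcal{L}_0[\vv] = \Big(\int |\nabla a|^2 - q^2\int a^2\Big) + \int |\nabla b|^2 . \]
The in-plane component $b$ is always coercive. The out-of-plane component $a$ yields the scalar Dirichlet eigenvalue problem for $-\Delta$ on the cylinder. By separation of variables, its first eigenvalue is $\lambda_1 = z_0^2/R^2 + \pi^2/L^2$, with eigenfunction $J_0(z_0 r/R)\sin(\pi z/L)$. Hence $\mathcal{L}_0$ is positive definite if and only if $q^2 < \lambda_1$, i.e.\ $\Theta = qL < \Theta_c$. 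By the same convex-combination argument as in Step 4 of Proposition~\ref{lem:hemisphere}, this upgrades to the coercivity bound $\mathcal{L}_0 \geqslant (1 - \Theta^2/\Theta_c^2)\|\nabla \vv\|^2$. At $\Theta=\Theta_c$, the field $\bm{u}_\perp := J_0(z_0 r/R)\sin(\pi z/L)\,\bm{e}_y$ is tangent, lies in $H^1_0$, and gives $\mathcal{L}_0[\bm{u}_\perp]=0$. This proves the second-variation parts of (i) and (ii). The final sentence then follows from $\Theta_c > \pi$.

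The hard part is the global-minimality claims. Positivity of the second variation alone only gives local minimality. My approach is an exact energy identity for competitors $\bm{m} \in \mathcal{A}_{\nn_b}$. I write $\bm{m} = \ns + \vv$ with $\vv \in H^1_0$ and $|\ns+\vv|=1$. Testing $-\Delta \ns = q^2 \ns$ with $2\vv$ and using $2\ns\cdot\vv = -|\vv|^2$ gives the identity
\[ \int|\nabla \bm{m}|^2 - \int|\nabla\ns|^2 = \int|\nabla \vv|^2 - q^2\int|\vv|^2 , \]
as in \eqref{eq:ELtested}. It therefore suffices to show that $\int|\nabla\vv|^2 \geqslant q^2\int |\vv|^2$ for every such $\vv$, now without any tangency constraint. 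I decompose $\vv = a\bm{e}_y + b\bm{t} + c\ns$ with $a,b,c \in H^1_0(\Om)$. The frame computation gives
\[ |\nabla\vv|^2 = |\nabla a|^2 + |\partial_z b - qc|^2 + |\partial_z c + qb|^2 + |\nabla_h b|^2 + |\nabla_h c|^2 , \]
where $\nabla_h$ is the horizontal gradient. The $a$-part is handled by the eigenvalue $\lambda_1 \geqslant q^2$ when $\Theta\leqslant\Theta_c$.

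For the $(b,c)$ part, I set $w := b + ic \in H^1_0(\Om;\mathbb{C})$. The expression above becomes $\int |\nabla_h w|^2 + |\partial_z w + iqw|^2$. This equals $\int |\nabla (e^{iqz} w)|^2 \geqslant \lambda_1\int |w|^2$, which is at least $q^2 \int|w|^2$. Combining the two parts yields $\mathcal{E}(\bm{m}) \geqslant \mathcal{E}(\ns)$ for all $\Theta\leqslant\Theta_c$, so $\ns$ is a global minimizer in both cases.

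For uniqueness in (i), equality would force $(\lambda_1 - q^2)\int (|a|^2+|w|^2) = 0$, hence $\vv = 0$. The step I expect to need the most care is this frame computation. In particular, I need to verify that the cross terms produced by $\partial_z\bm{t} = -q\ns$ and $\partial_z\ns = q\bm{t}$ combine exactly into the gauge-covariant form $|\partial_z w + iqw|^2$. Once that holds, the gauge transformation $w\mapsto e^{iqz}w$ preserves both $H^1_0$ and the $L^2$ norm, which makes the bound exact.
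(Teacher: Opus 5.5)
Your proof is correct, and its structure matches the paper's. Both use the exact excess-energy identity $\int_\Om|\nabla\bm{m}|^2\dx-\int_\Om|\nabla\ns|^2\dx=\int_\Om\bigl(|\nabla\vv|^2-q^2|\vv|^2\bigr)\dx$, then a Poincar\'e bound with $\lambda_1=z_0^2/R^2+\pi^2/L^2$, then the zero mode $J_0(z_0r/R)\sin(\pi z/L)\,\bm{e}_y$ at $\Theta=\Theta_c$.

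The difference is how you prove $\int_\Om|\nabla\vv|^2\dx\geqslant\lambda_1\int_\Om|\vv|^2\dx$ for $\vv\in H^1_0$ that are not tangent. The paper applies the scalar Poincar\'e inequality to each Cartesian component of $\vv$, which gives this in one line. Your moving frame plus gauge transformation re-derives the same inequality the long way. In fact $e^{-iqz}(b+ic)=v_x+i\,v_z$, so the gauge simply undoes the frame rotation and returns the Cartesian components.

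Watch one sign. From $\partial_z\bm{t}=-q\ns$ and $\partial_z\ns=q\bm{t}$ you get $\partial_z(b\bm{t}+c\ns)=(\partial_z b+qc)\,\bm{t}+(\partial_z c-qb)\,\ns$. So the covariant derivative is $\partial_z w-iqw$ and the gauge factor should be $e^{-iqz}$, not $e^{iqz}$. This is harmless, since either sign gives the same bound.

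Where your frame does add something is on the tangent side. The splitting $\mathcal{L}_0[a\bm{e}_y+b\bm{t}]=\bigl(\int_\Om|\nabla a|^2\dx-q^2\int_\Om a^2\dx\bigr)+\int_\Om|\nabla b|^2\dx$ shows that in-plane tangent variations are always coercive and only the out-of-plane mode can destabilize. That yields the ``if and only if'' characterization of positive definiteness, which the paper leaves implicit.

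Finally, the constant $1-\Theta^2/\Theta_c^2$ in (i) follows directly from componentwise Poincar\'e together with $\Theta_c^2=\lambda_1L^2$. The convex-combination argument of Proposition~\ref{lem:hemisphere} is not needed.
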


\begin{proof}
  Let $\Om = B_R \times (0, L)$ and fix $q > 0$. The purely $z$-dependent map
  $\ns (x, y, z) := (\sin (qz), 0, \cos (qz))$ satisfies $| \nabla \ns |^2 =
  q^2$ and $- \Delta \ns = q^2 \ns = | \nabla \ns |^2 \ns$, confirming it is a
  weakly harmonic map.
   \smallskip
  
\noindent
  \emph{Step 1. Global minimality via exact energy decomposition.} Let $\nn
  \in H^1 (\Om, \jM)$ be an arbitrary competitor satisfying $\nn = \ns$ on
  $\partial \Om$, and define the difference $\vv := \nn - \ns \in H^1_0 (\Om,
  \RR^3)$. The pointwise spherical constraint $| \nn |^2 = | \ns + \vv |^2 =
  1$ implies $2 \ns \cdot \vv = - | \vv |^2$ almost everywhere.
  
  Expanding the Dirichlet energy of $\nn$ and integrating by parts yields
  \begin{align*}
    \frac{1}{2}  \int_{\Om} | \nabla \nn |^2 \dx & = \frac{1}{2}  \int_{\Om} |
    \nabla \ns |^2 \dx + \int_{\Om} \nabla \ns : \nabla \vv \dx + \frac{1}{2} 
    \int_{\Om} | \nabla \vv |^2 \dx\\
    & = \frac{1}{2}  \int_{\Om} | \nabla \ns |^2 \dx - \int_{\Om} \Delta \ns
    \cdot \vv \dx + \frac{1}{2}  \int_{\Om} | \nabla \vv |^2 \dx .
  \end{align*}
  Using $- \Delta \ns = q^2 \ns$ and the spherical constraint, the cross-term
  simplifies exactly to
  \begin{equation}
    - \int_{\Om} \Delta \ns \cdot \vv \dx = q^2  \int_{\Om} \ns \cdot \vv \dx
    = - \frac{q^2}{2}  \int_{\Om} | \vv |^2 \dx .
  \end{equation}
  Thus, the excess energy of any competitor is exactly represented by the
  quadratic form:
  \begin{equation}
    \label{eq:exact_energy_identity} \int_{\Om} | \nabla \nn |^2 \dx -
    \int_{\Om} | \nabla \ns |^2 \dx = \int_{\Om} \left( | \nabla \vv |^2 - q^2
    | \vv |^2 \right) \dx .
  \end{equation}
  Let $\lambda_1$ be the principal Dirichlet eigenvalue of the negative
  Laplacian on $\Om$. By the Poincaré inequality, $\int | \nabla \vv |^2 \dx
  \geqslant \lambda_1  \int | \vv |^2 \dx$ for all $\vv \in H^1_0 (\Om,
  \RR^3)$. For $q^2 \leqslant \lambda_1$, the excess energy is non-negative,
  proving $\ns$ is a global minimizer. For $q^2 < \lambda_1$, the excess
  energy is strictly positive for $\vv \neq 0$, rendering $\ns$
  unique.
   \smallskip
  
\noindent
  \emph{Step 2. Failure of strict coercivity.} The strict
  coercivity condition (Assumption~\ref{ass:coercivity}) requires the second
  variation $\mathcal{L}_0 [\bm{u}]$ to be strictly
  positive for all non-trivial tangent variations $\bm{u}
  \in H^1_0 (\Om, \RR^3)$ satisfying $\bm{u} \cdot \ns =
  0$. The second variation at $\ns$ evaluates to
  \begin{equation}
    \mathcal{L}_0 [\bm{u}] = \int_{\Om} (| \nabla
    \bm{u}|^2 - q^2 |\bm{u}|^2) \dx
    .
  \end{equation}
  Define the orthogonal perturbation
  $\bm{u}_{\perp} (x, y, z) = (0, \psi (r, z),
  0)$, where $\psi \in H^1_0 (\Om)$ is the principal eigenfunction of the
  cylinder:
  \begin{equation}
    \psi (r, z) = J_0 \left( \frac{z_0 r}{R} \right) \sin \left( \frac{\pi
    z}{L} \right), \qquad \lambda_1 = \frac{z_0^2}{R^2} + \frac{\pi^2}{L^2} .
  \end{equation}
  Because $\ns$ lies entirely in the $xz$-plane,
  $\bm{u}_{\perp} \cdot \ns = 0$ everywhere, making
  $\bm{u}_{\perp}$ an admissible tangent vector. At the
  threshold $q^2 = \lambda_1$, we obtain $\mathcal{L}_0
  [\bm{u}_{\perp}] = 0$. Thus, $\ns$ remains a
  global minimizer, but its energy landscape possesses a zero-mode, violating
  strict coercivity.
   \smallskip
  
\noindent
  \emph{Step 3. Geometric twist threshold.} The boundary data
  traces an equatorial arc of length $\Theta = qL$. At the critical threshold
  $q = \sqrt{\lambda_1}$ where strict coercivity fails, this total twist is
  \begin{equation}
    \Theta_c = L \sqrt{\frac{z_0^2}{R^2} + \frac{\pi^2}{L^2}} = \sqrt{z_0^2
    \left( \frac{L}{R} \right)^2 + \pi^2} .
  \end{equation}
  Since $z_0 > 0$ and $L / R > 0$, we strictly have $\Theta_c > \pi$.
  Consequently, strict coercivity can only degenerate if the image of the
  boundary data strictly exceeds a great semicircle, demonstrating the
  geometric sharpness of the hemisphere condition ($\Theta < \pi$).
\end{proof}

\section{Bifurcation of accumulation points} \label{app:bifurcation}
This section aims to rigorously establish the claim made in Remark~\ref{rmk:bifurcation} regarding the possible bifurcation of the asymptotic limits for minimizers of the exact physical energy $\mathcal{E}_{\varepsilon}$ and its local approximation $\mathcal{G}_{\varepsilon}$. 

Let $\Omega = \omega \times (0, \delta) \subset \mathbb{R}^3$, where $\omega := (0, L)^2$ and $L, \delta > 0$, be a square-based cylindrical domain. We consider the fundamental planar state on $\Omega$,
\begin{equation}
  \boldsymbol{n}_{\mathrm{pl}}(x, y, z) = (\cos \phi(x), \sin \phi(x), 0),
\end{equation}
where $\phi(x) = \Theta(x / L - 1 / 2)$, with $\Theta > 0$ denoting the total twist angle across the longitudinal axis. We prescribe strictly planar Dirichlet boundary conditions to anchor the director $\boldsymbol{n}$ to the equator of the target manifold $\mathbb{S}^2$ on the entire boundary $\partial \Omega$:
\begin{equation}
  \boldsymbol{n}_b = \boldsymbol{n}_{\mathrm{pl}} \quad \text{on } \partial \Omega.
\end{equation}
Electrostatic coupling is incorporated by prescribing the boundary potential $\xi_b(x, y, z) = z$ on $\partial \Omega$. The associated harmonic extension generates a uniform background electric field
\begin{equation}
  \boldsymbol{E}_0 = - \nabla z = -\boldsymbol{e}_z.
\end{equation}

In this setting, the local approximation $\mathcal{G}_{\varepsilon}$ takes the form
\begin{equation}
  \mathcal{G}_{\varepsilon}(\boldsymbol{n}) = \int_{\Omega} |\nabla \boldsymbol{n}|^2 \, dx - \varepsilon \int_{\Omega} (\boldsymbol{n} \cdot \boldsymbol{e}_z)^2 \, dx.
\end{equation}
It is a straightforward algebraic verification that, for every $\varepsilon > 0$, the planar state $\boldsymbol{n}_{\mathrm{pl}}$ satisfies the Euler--Lagrange equation
\begin{equation}
  - \Delta \boldsymbol{n} - \varepsilon (\boldsymbol{e}_z \otimes \boldsymbol{e}_z) \boldsymbol{n} = \left( |\nabla \boldsymbol{n}|^2 - \varepsilon (\boldsymbol{n} \cdot \boldsymbol{e}_z)^2 \right) \boldsymbol{n},
\end{equation}
and is therefore \textit{always} an equilibrium configuration (a critical point) for the local approximation $\mathcal{G}_{\varepsilon}$, regardless of the applied voltage. However, as we will demonstrate in the proof of the main result below, if the imposed boundary twist exceeds the critical threshold
\begin{equation}
  \Theta > \Theta_c, \quad \text{where} \quad \Theta_c := \pi \sqrt{2 + \frac{L^2}{\delta^2}},
\end{equation}
the planar state becomes energetically unstable. Moreover, it is the unique, absolute global minimizer of the unperturbed energy $\mathcal{G}_0$ restricted to the class $\mathcal{A}_{\mathrm{pl}} = \{\boldsymbol{n} \in \mathcal{A}_{\boldsymbol{n}_b}(\Omega) : (\boldsymbol{n} \cdot \boldsymbol{e}_z) \equiv 0\}$ of strictly planar admissible maps. Consequently, no global minimizer of $\mathcal{G}_0$ in the full space $\mathcal{A}_{\boldsymbol{n}_b}(\Omega)$ can be planar: otherwise, it would also minimize $\mathcal{G}_0$ within $\mathcal{A}_{\mathrm{pl}}$ and, by uniqueness, coincide with $\boldsymbol{n}_{\mathrm{pl}}$, which is unstable for $\Theta > \Theta_c$.

\begin{theorem}[Bifurcation of accumulation points]
  \label{thm:bif}
  Assume $\Theta > \Theta_c$. There exist a family of exact physical minimizers $\{\boldsymbol{m}^{\varepsilon}\} \subset \operatorname{argmin} \mathcal{E}_{\varepsilon}$ and a sequence of local approximate minimizers $\{\boldsymbol{n}^{\varepsilon}\} \subset \operatorname{argmin} \mathcal{G}_{\varepsilon}$ such that, as $\varepsilon \to 0$,
  \begin{equation}
    \boldsymbol{m}^{\varepsilon} \to \boldsymbol{m} \quad \text{and} \quad \boldsymbol{n}^{\varepsilon} \to \boldsymbol{n}_* \qquad \text{strongly in } H^1(\Omega), \label{eq:existencelimits}
  \end{equation}
  where $\boldsymbol{m}, \boldsymbol{n}_* \in \operatorname{argmin} \mathcal{G}_0$ are macroscopically distinct limiting ground states, i.e., $\boldsymbol{m} \neq \boldsymbol{n}_*$.
\end{theorem}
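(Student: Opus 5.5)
The plan is a symmetry argument based on the reflection $R := \mathrm{diag}(1,1,-1)$ of the target sphere, which flips the sign of $\nn\cdot\bm{e}_z$. Since $\boldsymbol{n}_b=\boldsymbol{n}_{\mathrm{pl}}$ is planar, $R\boldsymbol{n}_b=\boldsymbol{n}_b$, so $\nn\mapsto R\nn$ maps $\mathcal{A}_{\boldsymbol{n}_b}(\Omega)$ onto itself. It also preserves $|\nabla\nn|^2$ pointwise and leaves $(\nn\cdot\bm{e}_z)^2$ unchanged. Hence $\mathcal{G}_0(R\nn)=\mathcal{G}_0(\nn)$ and $\mathcal{G}_\varepsilon(R\nn)=\mathcal{G}_\varepsilon(\nn)$, so $R$ maps $\operatorname{argmin}\mathcal{G}_\varepsilon$ onto itself for every $\varepsilon\geqslant 0$. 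The idea is to take whatever limit the exact minimizers select, and then, if necessary, reflect the local minimizers so that they select the mirror-image ground state.

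\emph{Step 1 (no planar ground state).} I will make rigorous the claim preceding the statement: for $\Theta>\Theta_c$, no element of $\operatorname{argmin}\mathcal{G}_0$ is planar.
\begin{itemize}
\item Within $\mathcal{A}_{\mathrm{pl}}$, lift $\nn=(\cos\varphi,\sin\varphi,0)$ as in Lemma~\ref{lem:parallelplates}. By strict convexity of the Dirichlet energy in the phase $\varphi$, $\boldsymbol{n}_{\mathrm{pl}}$ is the unique minimizer there.
\item To show it is not a global minimizer, test with $\boldsymbol{n}_t=(\cos t\psi\,\boldsymbol{n}_{\mathrm{pl}}+\sin t\psi\,\bm{e}_z)$, where $\psi(x,y,z)=\sin(\pi x/L)\sin(\pi y/L)\sin(\pi z/\delta)$. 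Since $\bm{e}_z\perp\boldsymbol{n}_{\mathrm{pl}}$ and $|\nabla\boldsymbol{n}_{\mathrm{pl}}|^2=\Theta^2/L^2$, the second variation is
\[
\int_\Omega\Big(|\nabla\psi|^2-\tfrac{\Theta^2}{L^2}\psi^2\Big)\dx
=\Big(\pi^2\big(\tfrac{2}{L^2}+\tfrac1{\delta^2}\big)-\tfrac{\Theta^2}{L^2}\Big)\int_\Omega\psi^2\dx ,
\]
which is negative exactly when $\Theta>\Theta_c$. So $\boldsymbol{n}_{\mathrm{pl}}$ is not a minimizer of $\mathcal{G}_0$.
\item Consequently every $\bm{m}\in\operatorname{argmin}\mathcal{G}_0$ has $\bm{m}\cdot\bm{e}_z\not\equiv0$, hence $R\bm{m}\neq\bm{m}$, while $R\bm{m}\in\operatorname{argmin}\mathcal{G}_0$.
\end{itemize}

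\emph{Step 2 (selection of limits).}
\begin{itemize}
\item Take any family $\bm{m}^\varepsilon\in\operatorname{argmin}\mathcal{E}_\varepsilon$; existence follows from the direct method together with the bound \eqref{eq:uniform_E_G0}. By Proposition~\ref{prop:gamma2}, along a sequence $\varepsilon_k\to0$ we have $\bm{m}^{\varepsilon_k}\to\bm{m}$ strongly in $H^1$, with $\bm{m}\in\operatorname{argmin}\mathcal{G}_0$.
\item Take $\tilde{\nn}^{\varepsilon_k}\in\operatorname{argmin}\mathcal{G}_{\varepsilon_k}$. By Proposition~\ref{prop:gamma}, a further subsequence gives $\tilde{\nn}^{\varepsilon_k}\to\tilde{\nn}$ strongly in $H^1$, with $\tilde{\nn}\in\operatorname{argmin}\mathcal{G}_0$. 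Restrict the $\bm{m}^{\varepsilon_k}$ to the same subsequence.
\item If $\tilde{\nn}\neq\bm{m}$, set $\nn^{\varepsilon}:=\tilde{\nn}^{\varepsilon}$ and $\nn_*:=\tilde{\nn}$.
\item If $\tilde{\nn}=\bm{m}$, set $\nn^{\varepsilon}:=R\tilde{\nn}^{\varepsilon}$, which still lies in $\operatorname{argmin}\mathcal{G}_\varepsilon$ by invariance. Since $R$ is a linear isometry of $H^1$, $R\tilde{\nn}^{\varepsilon}\to R\bm{m}=:\nn_*$ strongly, and $\nn_*\neq\bm{m}$ by Step 1.
\end{itemize}
This yields \eqref{eq:existencelimits}.

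The only genuinely nontrivial input is the nonplanarity in Step 1; the rest is symmetry plus the earlier compactness results. The second-variation test above handles it. The one care point is the admissibility of $\boldsymbol{n}_t$: it has unit length because $\boldsymbol{n}_{\mathrm{pl}}\perp\bm{e}_z$, and it satisfies the boundary condition because $\psi\in H^1_0$.
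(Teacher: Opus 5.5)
Your proposal is correct and follows the paper's own route. The ingredients match: the reflection $\mathrm{diag}(1,1,-1)$ (the paper's $\sigma$); non-planarity of every ground state, obtained from the negative second variation along $\eta\,\bm{e}_z$ with the principal Dirichlet eigenfunction together with minimality of $\boldsymbol{n}_{\mathrm{pl}}$ in the planar class; and the case split on whether the two limits coincide. Your explicit admissible curve $\boldsymbol{n}_t$ and common subsequence make rigorous two points the paper only asserts. One caveat: existence of minimizers of $\mathcal{E}_\varepsilon$ also needs weak lower semicontinuity, i.e.\ continuity of the electrostatic term under strong $L^2$ convergence of the directors (dominated convergence in \eqref{eq:Poisson}), which \eqref{eq:uniform_E_G0} alone does not supply.
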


\begin{proof}
  We divide the proof into three steps.
  
  \medskip
  
  \noindent \textit{Step 1. General strategy.} By the $\Gamma$-convergence result established in Section~\ref{ssec:gamma}, there exist families $\{\boldsymbol{m}^{\varepsilon}\} \subset \operatorname{argmin} \mathcal{E}_{\varepsilon}$ and $\{\boldsymbol{n}^{\varepsilon}\} \subset \operatorname{argmin} \mathcal{G}_{\varepsilon}$ such that
  \begin{equation}
    \boldsymbol{m}^{\varepsilon} \to \boldsymbol{m} \quad \text{and} \quad \boldsymbol{n}^{\varepsilon} \to \boldsymbol{n} \qquad \text{strongly in } H^1(\Omega),
  \end{equation}
  for some $\boldsymbol{m}, \boldsymbol{n} \in \operatorname{argmin} \mathcal{G}_0$. If $\boldsymbol{m} \neq \boldsymbol{n}$, the proof is complete upon setting $\boldsymbol{n}_* := \boldsymbol{n}$.
  
  Conversely, if $\boldsymbol{m} = \boldsymbol{n}$, we exploit the symmetry of the local functional. We set $\boldsymbol{n}_*^{\varepsilon} := \sigma(\boldsymbol{n}^{\varepsilon})$, where $\sigma : H^1(\Omega; \mathbb{S}^2) \to H^1(\Omega; \mathbb{S}^2)$ is the spatial reflection operator defined by $\sigma(n_1, n_2, n_3) = (n_1, n_2, -n_3)$. Because $\mathcal{G}_{\varepsilon}$ is perfectly invariant under reflection, $\boldsymbol{n}_*^{\varepsilon}$ remains a valid sequence of global minimizers. By the continuity of the operator $\sigma$ in $H^1(\Omega)$, we clearly have
  \begin{equation}
    \boldsymbol{n}_*^{\varepsilon} \to \sigma(\boldsymbol{n}).
  \end{equation}
  Setting $\boldsymbol{n}_* := \sigma(\boldsymbol{n})$, the result follows as soon as we establish that $\sigma(\boldsymbol{n}) \neq \boldsymbol{n}$ for $\Theta > \Theta_c$.
  
  \medskip
  
  \noindent \textit{Step 2. Instability of the planar state.} If the imposed boundary twist exceeds the critical threshold ($\Theta > \Theta_c$), the planar state $\boldsymbol{n}_{\mathrm{pl}}$ becomes energetically unstable. Consequently, no global minimizer of $\mathcal{G}_0$ can be planar.
  
  The second variation of the Dirichlet energy evaluated at $\boldsymbol{n}_{\mathrm{pl}}$ in the direction of a valid tangent variation $\boldsymbol{v} \in H^1_0(\Omega; \mathbb{R}^3)$ (satisfying $\boldsymbol{v} \cdot \boldsymbol{n}_{\mathrm{pl}} = 0$ almost everywhere) is given by:
  \begin{equation}
    \delta^2 \mathcal{G}_0(\boldsymbol{n}_{\mathrm{pl}})[\boldsymbol{v}] = 2 \int_{\Omega} \left( |\nabla \boldsymbol{v}|^2 - |\nabla \boldsymbol{n}_{\mathrm{pl}}|^2 |\boldsymbol{v}|^2 \right) dx.
  \end{equation}
  Any strictly out-of-plane vector field $\boldsymbol{v} = \eta \boldsymbol{e}_z$, for a scalar field $\eta \in H^1_0(\Omega)$, is orthogonal to $\boldsymbol{n}_{\mathrm{pl}}$ and therefore constitutes an admissible tangent variation. Substituting the gradient norm $|\nabla \boldsymbol{n}_{\mathrm{pl}}|^2 = (\phi')^2 = \Theta^2 / L^2$ yields the quadratic form:
  \begin{equation}
    \delta^2 \mathcal{G}_0(\boldsymbol{n}_{\mathrm{pl}})[\eta \boldsymbol{e}_z] = 2 \int_{\Omega} \left( |\nabla \eta|^2 - \frac{\Theta^2}{L^2} \eta^2 \right) dx.
  \end{equation}
  Testing this form against the principal Dirichlet eigenfunction $\eta_1$ of the domain evaluates exactly to $2(\lambda_1(\Omega) - \Theta^2 / L^2) \|\eta_1\|_{L^2}^2$. By definition, the critical threshold corresponds to the first Dirichlet eigenvalue: $\lambda_1(\Omega) = \Theta_c^2 / L^2$. Therefore, the second variation is strictly negative if and only if $\Theta > \Theta_c$.
  
  A negative second variation guarantees that the state is an unstable saddle point, implying the existence of admissible competitors with strictly lower energy. Consequently, the true global infimum strictly satisfies
  \begin{equation}
    \min_{\mathcal{A}_{\boldsymbol{n}_b}(\Omega)} \mathcal{G}_0 < \mathcal{G}_0(\boldsymbol{n}_{\mathrm{pl}}). \label{eq:planarhighen}
  \end{equation}
  On the other hand, it is standard to show that $\boldsymbol{n}_{\mathrm{pl}}$ is the unique global minimizer of the unperturbed energy $\mathcal{G}_0$ restricted to the class $\mathcal{A}_{\mathrm{pl}} = \{\boldsymbol{n} \in \mathcal{A}_{\boldsymbol{n}_b}(\Omega) : \boldsymbol{n} \cdot \boldsymbol{e}_z \equiv 0\}$ of strictly planar admissible maps. Consequently, no global minimizer of $\mathcal{G}_0$ in the full space $\mathcal{A}_{\boldsymbol{n}_b}(\Omega)$ can be planar: otherwise, it would also minimize $\mathcal{G}_0$ in $\mathcal{A}_{\mathrm{pl}}$ and, by uniqueness, coincide with $\boldsymbol{n}_{\mathrm{pl}}$, which contradicts \eqref{eq:planarhighen} for $\Theta > \Theta_c$.
  
  \medskip
  
  \noindent \textit{Step 3. Conclusion: $\sigma(\boldsymbol{n}) \neq \boldsymbol{n}$.} The previous step guarantees that the global minimizer $\boldsymbol{n}$ is not planar when $\Theta > \Theta_c$. Since the fixed points of the reflection operator $\sigma$ are strictly limited to the planar configurations, it follows immediately that $\sigma(\boldsymbol{n}) \neq \boldsymbol{n}$.
\end{proof}

\section*{Declarations}

\noindent
\textit{Author Contributions:} All authors contributed equally to this work.
\smallskip

\noindent
\textit{Ethical Approval:} Not applicable. This study does not involve human participants or animals.
\smallskip

\noindent
\textit{Conflict of Interest:} The authors declare that they have no conflicts of interest.
\smallskip

\noindent
\textit{Data and Code Availability:} No datasets were generated or analyzed during the current study. 
\smallskip

\noindent
\textit{Declaration of Generative AI and AI-Assisted Technologies:}
During the preparation of this manuscript, the authors used Grammarly and Gemini to assist with spell-checking and to improve the fluency of selected portions of the text. The manuscript was initially prepared in TeXmacs and subsequently exported to LaTeX. Gemini was also used to identify and correct conversion errors arising from this export process and to generate or refine LaTeX code for improving the document layout, typography, and overall presentation.

No generative AI tools were used in the development of the mathematical ideas, the derivation of the theoretical results, or the proofs. The authors are fully responsible for the content of the manuscript and for the accuracy, validity, and integrity of all results presented.

\section*{Acknowledgements}
\textsc{G.D.F.} is a member of GNAMPA--INdAM. He gratefully acknowledges partial financial support from the GNAMPA Project CUP\_E53C25002010001, and from the University of Naples Federico II through the FRA Project-B ``VarMoCry'' on  \emph{Variational Analysis and Modeling of Liquid Crystals}. Further support is acknowledged from the Italian Ministry of University and Research through the PRIN 2022 project \emph{Variational Analysis of Complex Systems in Material Science, Physics and Biology} (No.~2022HKBF5C).

 The work of AZ has been partially supported by the Basque Government through the BERC 2022-2025 program and by the Spanish State Research Agency through BCAM Severo Ochoa excellence accreditation Severo Ochoa CEX2021-00114 and through project PID2023-146764NB-I00
funded by MICIU/AEI/10.13039/501100011033. AZ was also partially supported by a grant of the Ministry of Research, Innovation and Digitization, CNCS-UEFISCDI, project number PN-IV-P2-2.1-T-TE-2023-0219, within PNCDI IV.

\bibliographystyle{alpha}
\bibliography{DFSZ_LC_Electric_2026}

\end{document}